\newtheorem{theorem}{Theorem}[section]
\newtheorem{lemma}[theorem]{Lemma}
\newtheorem{proposition}[theorem]{Proposition}
\newtheorem{corollary}[theorem]{Corollary}
\theoremstyle{remark}
\theoremstyle{remark}
\newtheorem*{note}{Remark}
\theoremstyle{definition}
\newtheorem*{example}{Example}
\numberwithin{equation}{section}
 \DeclareMathOperator{\Ad}{Ad}
\DeclareMathOperator{\ad}{ad} 
\DeclareMathOperator{\Var}{Var}
\DeclareMathOperator{\diag}{diag}
\newcommand{\e}{\text{\bf E}}
\newcommand{\N}{\mathbb{N}}
\newcommand{\R}{\mathbb{R}}
\newcommand{\C}{\mathbb{C}}
\newcommand{\one}{\mathbf 1}
\newcommand{\tr}{\mathrm{t}}
\renewcommand\o{\overline}
\def\mc#1{\mathcal#1 }
\def\tl{\tilde}
\def\dbar{\partial}
\def\f{\phi}
\def\Lam{\Lambda}
\def\lam{\lambda}
\def\sg{\sigma}
\def\ve{\varepsilon}
\def\ali{\aligned}
\def\eal{\endaligned}
\def\8{\infty}
\def\blab#1{\begin{equation}\label{#1}}
\def\elab{\end{equation}}
\def\refp#1{~(\ref{#1}) on page~\pageref{#1}}
\def\refp#1{~(\ref{#1})}
\def\refn#1{~\ref{#1}}
\def\lp{\left(}
\def\rp{\right)}
\def\fn#1#2{\frac{#1}{#2}}
\def\g{\gamma }
\def\blab#1{\begin{equation}\label{#1}}
\def\elab{\end{equation}}
\def\fn#1#2{\frac{#1}{#2}}
\def\mf#1{\mathfrak{#1}}
\def\rest#1{\big |_{#1}}
\def\fac{\,\forall\, }
\def\bma{\begin{bmatrix}}
\def\ebm{\end{bmatrix}}
\title[Evolution and Poisson kernels]{The evolution and Poisson kernels on nilpotent meta-Abelian groups}
\author[R. Penney]{Richard Penney}
\address{Department of Mathematics\\
Purdue University\\
150 N. U\-ni\-ver\-si\-ty St\\
West Lafayette, IN 47907, USA} \email{rcp@math.purdue.edu}
\author[R. Urban]{Roman Urban}
\address{Institute of Mathematics\\
Wroclaw University\\
Plac Grunwaldzki 2/4\\
50-384 Wroclaw, Poland} \email{urban@math.uni.wroc.pl}
\subjclass[2000]{43A85, 31B05, 22E25, 22E30, 60J25, 60J60}
\keywords{Poisson kernel, evolution kernel, harmonic functions, left invariant differential operators, meta-abelian nilpotent Lie groups, solvable Lie groups, homogeneous groups, higher rank $NA$ groups, Brownian motion, exponential functionals of Brownian motion}
\begin{document}
\begin{abstract}
Let $S$ be a semi direct product $S=N\rtimes A$ where $N$ is a connected and simply connected, non-abelian,
nilpotent meta-abelian Lie group and $A$ is isomorphic with $\R^k,$ $k>1.$ We consider a class of second order left-invariant differential
operators on $S$ of the form $\mathcal L_\alpha=L^a+\Delta_\alpha,$ where $\alpha\in\R^k,$ and for each $a\in\R^k,$ $L^a$ is left-invariant second order differential operator on $N$ and $\Delta_\alpha=\Delta-\langle\alpha,\nabla\rangle,$ where $\Delta$ is the usual Laplacian on $\R^k.$ Using some probabilistic techniques (e.g., skew-product formulas for diffusions on $S$ and $N$ respectively) we obtain an upper estimate for the transition probabilities of the evolution on $N$ generated by $L^{\sigma(t)},$ where $\sigma$ is a continuous function from $[0,\infty)$ to $\R^k.$ We also give an upper bound for the Poisson kernel for $\mathcal L_\alpha.$
\end{abstract}
\maketitle
 %\tableofcontents
\section{Introduction}
\subsection{The evolution kernel on $NA$ groups}\label{introduction}

We say that a solvable Lie group  $S$ is an $NA$ group if it is a semi-direct product $S=N\rtimes A$ where $N$ is a connected and simply connected
nilpotent Lie group and $A$ is isomorphic with $\R^k.$ There is a remarkable probabilistic formula (formula\refp{probform} below) for the heat semi-group defined by a fairly general second order elliptic, or even degenerate elliptic, left invariant, differential operator  on an $NA$ group that has long played a central role in their analysis. (See  \cite{D,DHstudia,DHZ,DHU,PUcm,PUpota,JEE} for example.)

To describe this formula in our context, let $\mf a$ and $\mf n$ be the Lie algebras for $A$ and $N$ respectively. In general, we identify connected, simply connected nilpotent Lie groups with their Lie algebra using the exponential map so that in particular, $A$ and $N$ are identified with $\mf a$ and $\mf n$.
We assume that there is a  basis $\mc B=\{X_1,\ldots,X_d\}$ for $\mathfrak{n}$ that diagonalizes the $A$-action.  We typically think of the $X_i$ as left-invariant differential operators on $N$.  When thought of as left-invariant operators on $S$ they are denoted $\tl X_i$.  Thus, for $f\in C^\8(S)$
\blab{Xi}
\tl X_i f(n,a)=e^{\lambda_i(a)}X_if(n,a)
\elab
where $\lambda_i\in\mathfrak{a}^*$ is the root functional corresponding to $X_i$, i.e., $[H,X_i]=\lambda_i(H)X_i,$ for all $H\in\mf a$.  We also choose a basis $\{A_1,\dots,A_k\}$ for $\mf a$, which we use to identify $\mf a$ with $\R^k$.

The Euclidean space $\R^k$ is endowed with the usual scalar product $\langle\cdot,\cdot\rangle$ and the corresponding $\ell^2$ norm $\|\cdot\|.$ For the vector $x\in\R^k$ we write $x^2=x\cdot x=\langle x,x\rangle=\sum_{i=1}^k x_i^2.$ By $\|\cdot\|_\infty,$ we denote the $\ell^\infty$ norm $\|x\|_\infty=\max_{1\leq i\leq k}|x_i|.$

For $\alpha=(\alpha_1,\ldots,\alpha_k)\in\R^k,$ let
\begin{equation}\label{defofl}
\ali
\mathcal{L}_{\alpha} &=\sum_{j=1}^d\tl X_j^2+\sum_{j=1}^k (A_j^2-2\alpha_jA_j)\\
&=\sum_{j=1}^de^{2\lambda_j(a)}X_j^2+\Delta_\alpha,\\
\eal
\end{equation}
where
\begin{equation}\label{defofdelta}
\Delta_\alpha=\sum_{j=1}^k(\partial_{a_j}^2-2\alpha_j\partial_{a_j}).
\end{equation}

For $a\in\R^k$ we let
$$
\mathcal L_N^a=\sum_{j=1}^de^{2\lambda_j(a)}X_j^2.
$$
For $\sg\in C^\8([0,\8),\R^k)$ and $s<t<\8$, let
$
P^\sg_{t,s}(x),\,x\in N
$
be the fundamental solution for the operator
\begin{equation}\label{opL}
L=\dbar_s+\mathcal L_N^{\sg(s)}.
\end{equation}
Thus $P^\sg_{t,s}$ is a non-negative function on $N$ such that
\blab{IntPsg}
\int_N P^\sg_{t,s}(x)dx=1
\elab
and, for $s<u<t,$
\blab{SgP}
P^\sg_{t,u}*P^\sg_{u,s}=P^\sg_{t,s}.
\elab
 Moreover, if $\f\in C^\8_c(N)$ then
 \blab{defOfUsg}
\f*P^\sg_{t,s}\equiv U^\sg_{s,t}(\f)
\elab
 is the solution to the Dirichlet problem on $N\times (s,\8)$ with  boundary data $\f,$ i.e.,
\blab{DefOfUst}
LU^\sg_{s,t}(\f)=0 \text{ on } N\times (0,t),\, \lim_{t\to s^+}U^\sg_{s,t}(\f)(x)=\f(x).
\elab
(For the existence of $P^\sg_{t,s}$ see~\cite{DHstudia,Tanabe}.)  In probabilistic terms, $P^\sg_{t,s}$ is the  kernel for the evolution defined by the time dependent family of operators $\mathcal L_N^{\sg_s}$.

Of course, $U^\sg_{s,t}$ can also be thought of as an integral operator. With obvious abuse of notation,  we  denote the corresponding kernel by $P^\sg_{t,s}(x;y)$. Thus
$$
P^\sg_{t,s}(x;y)=P^\sg_{t,s}(y^{-1}x).
$$
For $f\in C_c(N\times\R^k)$ and $t\geq 0,$ we put
\begin{equation}\label{probform}
T_tf(x,a)=\e_aU^\sigma_{0,t}f(x,\sigma_t)=\e_a(f*_{N}P^\sigma_{t,0})(x,\sigma(t)),
\end{equation}
where the expectation is taken with respect to the distribution of
the process $\sigma(t)$ (Brownian motion with drift) in $\R^k$ with the generator
$\Delta_\alpha.$ The operator $U^\sigma(0,t)$ acts on the
first variable of the function $f$ (as a convolution operator).

We have the following
\begin{theorem}\label{dmuh}
The family $T_t$ defined in \eqref{probform} is the semigroup of
operators generated by $\mathcal L_\alpha.$ That is
\begin{equation*}
\partial_tT_tf=\mathcal L_\alpha T_tf
\end{equation*}
and
\begin{equation*}
\lim_{t\to 0}T_tf=f.
\end{equation*}
\end{theorem}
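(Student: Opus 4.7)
The plan is to establish three things and combine them: (i) the semigroup property $T_{t+s}=T_s\circ T_t$, (ii) strong continuity $T_tf\to f$ as $t\to 0^+$, and (iii) the generator identity $\lim_{h\to 0^+}h^{-1}(T_hg-g)=\mathcal L_\alpha g$ on a suitable smooth class. Granting these, $\partial_tT_tf=\mathcal L_\alpha T_tf$ follows by writing $(T_{t+h}f-T_tf)/h=(T_h(T_tf)-T_tf)/h$ and letting $h\to 0^+$, with $g=T_tf$, provided $T_tf$ is smooth enough for (iii) to apply. Step (ii) itself is immediate from $\sigma(t)\to a$ almost surely, from $P^\sigma_{t,0}\to\delta_e$ as $t\to 0^+$ (built into \eqref{DefOfUst}), and dominated convergence for $f\in C_c$.

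For step (i), I would insert the Chapman--Kolmogorov identity \eqref{SgP}, $P^\sigma_{t+s,0}=P^\sigma_{t+s,s}*_NP^\sigma_{s,0}$, into the definition of $T_{t+s}f$ and use associativity of convolution on $N$. The kernel $P^\sigma_{t+s,s}$ depends on $\sigma$ only through $\sigma|_{[s,t+s]}$; setting $\tilde\sigma(r):=\sigma(r+s)$, the Markov property gives that, conditionally on $\mathcal F_s$, $\tilde\sigma$ is a Brownian motion with drift started at $\sigma(s)$, and $P^\sigma_{t+s,s}$ along $\sigma$ coincides with $P^{\tilde\sigma}_{t,0}$ along $\tilde\sigma$. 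The inner conditional expectation is therefore $T_tf(\,\cdot\,,\sigma(s))$, and convolving with $P^\sigma_{s,0}$ and averaging over $\sigma|_{[0,s]}$ produces $T_s(T_tf)(x,a)$.

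For step (iii), I would decompose
\begin{equation*}
T_hg(x,a)-g(x,a)=\e_a\bigl[g*_NP^\sigma_{h,0}(x,\sigma(h))-g(x,\sigma(h))\bigr]+\bigl(\e_a[g(x,\sigma(h))]-g(x,a)\bigr).
\end{equation*}
The second bracket divided by $h$ tends to $\Delta_\alpha g(x,a)$ because $\Delta_\alpha$ is the generator of $\sigma$ acting on the $a$-variable. For the first bracket, the parabolic equation $\partial_hu^\sigma=\mathcal L_N^{\sigma(h)}u^\sigma$ satisfied by $u^\sigma(x,h):=g(\cdot,b)*_NP^\sigma_{h,0}(x)$ with initial value $g(\cdot,b)$ yields
\begin{equation*}
g*_NP^\sigma_{h,0}(x,b)-g(x,b)=\int_0^h\mathcal L_N^{\sigma(\tau)}\bigl(g(\cdot,b)*_NP^\sigma_{\tau,0}\bigr)(x)\,d\tau;
\end{equation*}
setting $b=\sigma(h)$, dividing by $h$, and using $\sigma(\tau)\to a$ as $\tau\to 0$ together with continuity of the coefficients $e^{2\lambda_j(a')}$ in $a'$, the limit is $\mathcal L_N^ag(x,a)$. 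Summing the two contributions yields $(\Delta_\alpha+\mathcal L_N^a)g=\mathcal L_\alpha g$.

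The main obstacle is the rigorous interchange of limit and expectation in step (iii), especially when applied to $g=T_tf$: one needs smoothness and suitable decay of $g$ to bound Taylor remainders uniformly in the random path $\sigma$ and to control the error from replacing $\mathcal L_N^{\sigma(\tau)}$ by $\mathcal L_N^a$. Hypoelliptic regularity of $\mathcal L_\alpha$ supplies the smoothness of $T_tf$, while uniform estimates on $P^\sigma_{\tau,0}$ of the type developed elsewhere in the paper and in \cite{DHstudia} control the random coefficients.
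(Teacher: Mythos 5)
The paper does not actually prove Theorem~\ref{dmuh}: formula~\eqref{probform} and the semigroup statement are invoked as known and attributed to the cited literature (\cite{D,DHstudia,DHZ,DHU,PUcm,PUpota,JEE}), so there is no in-paper argument to compare your sketch against. That said, your outline is the natural skew-product argument and matches what is done in those references: the semigroup property via Chapman--Kolmogorov~\eqref{SgP} together with the Markov property of $\sigma$ and the observation that $P^\sigma_{t+s,s}$ depends on $\sigma$ only through $\sigma\vert_{[s,t+s]}$, hence equals $P^{\tilde\sigma}_{t,0}$ for the shifted path $\tilde\sigma(r)=\sigma(r+s)$; strong continuity via $\sigma(t)\to a$, $P^\sigma_{t,0}\to\delta_e$, and dominated convergence using~\eqref{IntPsg}; and the generator identity by separating the $N$-contribution $\mathcal L_N^a g$ from the $A$-contribution $\Delta_\alpha g$. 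Two cautions. First, the paper writes the evolution operator as $L=\partial_s+\mathcal L_N^{\sigma(s)}$, a backward equation in $s$, whereas your step (iii) integrates the forward equation $\partial_\tau\bigl(g*_N P^\sigma_{\tau,0}\bigr)=\mathcal L_N^{\sigma(\tau)}\bigl(g*_N P^\sigma_{\tau,0}\bigr)$; the two are compatible, but you should state explicitly which variable and sign convention $P^\sigma_{t,s}$ carries before integrating, since the paper's own notation in \eqref{opL}--\eqref{DefOfUst} is somewhat loose on this point. Second, you are right that the delicate step is interchanging the $\tau\to 0$ limit with the expectation over the random path and controlling the coefficients $e^{2\lambda_j(\sigma(\tau))}$ uniformly when $g=T_tf$; that is where the parabolic existence and regularity theory of \cite{DHstudia,Tanabe} and the hypoellipticity of $\mathcal L_\alpha$ must be brought in, and a complete proof would record the explicit a priori bounds used there rather than leaving them as acknowledged obstacles.
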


Of course, the Brownian motion with a drift is an extremely well understood object. Clearly, then, a good understanding of $P^\sg_{t,s}$ is key to understanding the heat semi-group as well as objects derived from it, such as the Poisson kernel.

It is not difficult to give an explicit formula for $P^\sg_{t,s}$ in the case that $N$ is abelian. (See Proposition\refn{RnPsig} below.)   Our first main result is a skew-product formula for $P^\sg_{t,s}$ (Theorem~\ref{DecPs2}) similar to formula\refp{probform} that describes $P^\sg_{t,s}$ on a meta-abelian group.
Specifically, we assume that
\begin{equation*}
N=M\rtimes V
\end{equation*}
where $M$ and $V$ are abelian Lie groups with the corresponding Lie algebras $\mathfrak m$ and $\mathfrak v$.  Let $\mc B_1=\{Y_1,\ldots,Y_m\}$ and $\mc B_2=\{X_1,\ldots,X_n\}$ be ordered bases for $\mathfrak m$ and $\mathfrak v$ respectively such that $\mc B'=\mc B_1\cup \mc B_2$ forms an ordered Jordan-H\"older basis for the Lie algebra $\mathfrak n$ of $N$, ordered so that the matrix of $\ad_X$ in this basis is strictly lower triangular for all $X\in \mathfrak n$. We  use $\mc B'$ in place of the basis $\mc B$ mentioned above\refp{Xi}.  Hence, in this case,
\begin{equation}\label{defofl2}
\begin{split}
\mathcal{L}_\alpha=&\Delta_\alpha+\sum_{j=1}^m e^{2\xi_j(a)}Y_j^2+\sum_{j=1}^ne^{2\vartheta_j(a)}X_j^2\\
=&\Delta_\alpha+\mathcal L_N^a.
\end{split}
\end{equation}
where $\xi_1,\ldots,\xi_m$ and $\vartheta_1,\ldots,\vartheta_n$ are the root functionals in $\mf a^*$ corresponding to the bases $\mc B_1$ and $\mc B_2$ respectively.

The time dependent family of operators
\begin{equation}\label{operators}
\begin{split}
\mathcal L_V^{\sg,t}&=\sum_{j=1}^{n}e^{2\vartheta_j(\sg(t))}X_j^2,
\end{split}
\end{equation}
gives rise to an evolution on $V=\R^n$ that  is described by a kernel $P^{V,\sigma}_{t,s}$ which may be explicitly computed, since $V$ is abelian.  In fact, it turns out that the process $\eta(t)$ generated by $\mathcal L_V^{\sg,t}$ has coordinates $\eta_j(t)$ which are independent Brownian motions with time shifted by
\blab{TShift}
A^\sg_{V,i}(s,t)=\int_s^te^{2\vartheta_{j}(\sigma(u))}du.
\elab

For $\eta\in C^\8([0,\8), V)$ let
$$
\mathcal L_M^{\sg,\eta,t}=\sum_{j=1}^{m}e^{2\xi_j(\sg(t))}(\Ad(\eta(t))Y_j)^2.
$$
This family of operators
gives rise to an evolution on $M=\R^m$ that  is described by a kernel $P^{M,\sigma,\eta}_{t,s}$ which may also be explicitly computed. Specifically, for $a\in A$, let $S(a)$ be the $m\times m$ matrix
$$
S(a)=\diag\left[e^{\xi_1(a)},\ldots,e^{\xi_m(a)}\right].
$$

For $v\in V$,  we identify $\Ad(v)\rest{\mf m}$ with the $m\times m$ matrix of this linear transformation with respect to the basis $\mc B_1$.  Let
\begin{equation*}
[a^{\sigma,\eta}_M(t)]=2\left[\Ad(\eta(t))\rest{\mf m}S^\sigma(t)\right]\left[\Ad(\eta(t))\rest{\mf m}S^\sigma(t)\right]^*,
\end{equation*}
where
\begin{equation*}
S^\sigma(t):=S(\sigma(t)),
\end{equation*}
and
\begin{equation*}
A^{\sigma,\eta}_M(s,t)=\int_s^t a_M^{\sigma,\eta}(u) du.
\end{equation*}
Finally, for a $d\times d$ invertible matrix $A$ we set
\begin{equation}\label{bede}
\mathcal B(A)(x)=\frac{1}{2} A^{-1}x\cdot x\text{ and }
\mathcal D(A)=(2\pi)^{-\frac{d}{2}}(\det A)^{-\frac{1}{2}}.
\end{equation}
We prove in \S\ref{eonM} that
for $m^1,m^2\in M=\R^{m},$
\begin{equation}\label{PMform}
P^{M,\sigma,\eta}_{t,s}(m^1,m^2)=\mathcal D(A^{\sigma,\eta}_M(t,s))e^{-\mathcal B(A^{\sigma,\eta}_M(t,s))(m^1-m^2)}.
\end{equation}

Our main tool is the following theorem.  To the best of our knowledge, this result represents the first known formula for the evolution defined by $P^\sg_{t,s}$ for a non-abelian $N$ other than the similar result for the Heisenberg group from our work~\cite{JEE}.

\begin{theorem}\label{DecPs2}
Let $N=M\rtimes V.$ For  every $m\in M$ and $v\in V$ and a.e.(with respect to the corresponding Wiener measure) trajectory $\sigma$ of the process generated by $\Delta_\alpha,$
\begin{multline*}
\int_NP^{\sigma}_{t,0}(m,v;m^\prime,v^\prime)f(m^\prime,v^\prime)dm^\prime dv^\prime\\=\int_M P^{M,\sigma,\eta}_{t,0}(m,m^\prime)f(m,\eta(t))dm^\prime d\mathbf W^{V,\sigma}_{y}(\eta)\\
=\int_M \mathcal D(A^{\sigma,\eta}_M(0,t))e^{-\mathcal B(A^{\sigma,\eta}_M(0,t))(m-m^\prime)}f(m^\prime,\eta(t))dm^\prime d\mathbf W^{V,\sigma}_{y}(\eta),
\end{multline*}
where $\mathbf W^{V,\sigma}_{y}$ is the product of $n$ one-dimensional Wiener measures transformed according to \eqref{TShift}, i.e., for the trajectory $\eta(t)=(\eta_1(t),\ldots,\eta_n(t))$ its coordinates $\eta_i(t)$ are the one-dimensional Brownian motions $b_i(t)$ starting from $v_i$ with their time changed by $A_{V,i}^\sigma(0,t),$ i.e.,
\begin{equation*}
\eta_i(t)=b_i(A_{V,i}^\sigma(0,t)).
\end{equation*}
\end{theorem}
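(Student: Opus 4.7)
The strategy is to realize the evolution generated by $\mathcal L_N^{\sigma(t)}$ as an honest stochastic process on $N$, and to exploit the semidirect product structure $N=M\rtimes V$ to see the process as a skew product of its $V$-marginal and an $M$-valued Gaussian conditional on the $V$-trajectory. First, I would write down the left-invariant vector fields in coordinates. Using the multiplication $(m_1,v_1)(m_2,v_2)=(m_1+\Ad(v_1)m_2,\,v_1+v_2)$ on $N$ and differentiating left translation at the identity, one sees that $X_j$ acts at $(m,v)$ as the constant-coefficient vector field $\partial_{X_j}$ in $V$ (with no $M$-component), while $Y_j$ acts at $(m,v)$ as the constant-coefficient (in $m$) directional derivative $\Ad(v)Y_j$ in $M$. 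Hence the operator $\mathcal L_N^a$ splits into a pure-$V$ piece $\sum_j e^{2\vartheta_j(a)}\partial_{X_j}^2$ plus a piece $\sum_j e^{2\xi_j(a)}[\Ad(v)Y_j]^{\!*}\cdot \mathrm{Hess}_m\cdot [\Ad(v)Y_j]$ that differentiates only in $m$ with coefficients depending on $v$ but not on $m$.

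This structural observation dictates the SDE associated with $\mathcal L_N^{\sigma(t)}$: starting from $(m,v)$, one can drive the process by two independent Brownian motions $B_V$ and $B_M$ via
\begin{align*}
dv(t) &= \sqrt{2}\sum_{j=1}^{n} e^{\vartheta_j(\sigma(t))} X_j\, dB_{V,j}(t),\\
dm(t) &= \sqrt{2}\sum_{j=1}^{m} e^{\xi_j(\sigma(t))}\,\Ad(v(t))Y_j\, dB_{M,j}(t).
\end{align*}
No drift arises: in the $V$-equation the coefficients are deterministic functions of $t$, and in the $M$-equation the coefficients depend only on $v(t)$ (hence on $B_V$), so the It\^o--Stratonovich correction vanishes since $\Ad(\eta(t))Y_j$ has no $m$-dependence and $\langle B_{M,j},v_k\rangle\equiv 0$.

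Once this SDE is in hand, the decomposition is immediate. The $V$-equation is autonomous with coordinatewise-decoupled, deterministic, time-dependent diffusion coefficients, so $\eta_j(t)=v_j+b_j(A^\sigma_{V,j}(0,t))$ for independent Brownian motions $b_j$; this is exactly the law $\mathbf W^{V,\sigma}_{v}$ defined in the statement, consistent with \eqref{TShift}. Freezing a realization $\eta$ of this $V$-trajectory, the conditional law of $m(\cdot)$ becomes a linear, driftless, Gaussian SDE whose $M$-valued solution at time $t$ is normal with mean $m$ and covariance matrix $\int_0^t 2[\Ad(\eta(u))S^\sigma(u)][\Ad(\eta(u))S^\sigma(u)]^{\!*}\,du=A^{\sigma,\eta}_M(0,t)$; its density is precisely the Gaussian kernel $\mathcal D(A^{\sigma,\eta}_M(0,t))\exp(-\mathcal B(A^{\sigma,\eta}_M(0,t))(m-m'))$ of \eqref{PMform}. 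Conditioning on $\sigma$ and averaging the product kernel $P^{M,\sigma,\eta}_{t,0}(m,m')\otimes \delta_{\eta(t)}(v')$ against $\mathbf W^{V,\sigma}_{v}$ reproduces the right-hand side of the theorem, while the left-hand side is, by definition, the action of the (conditional) transition kernel of the same process. Uniqueness of the fundamental solution of $L=\partial_s+\mathcal L_N^{\sigma(s)}$ (cited after \eqref{DefOfUst}) then forces the equality.

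\textbf{Main obstacle.} The delicate point is verifying that no hidden first-order terms enter the $M$-equation. One must check simultaneously (i) that passing from the Hörmander sum-of-squares form of $\mathcal L_N^{\sigma(t)}$ to an It\^o SDE produces no drift in the $m$-component---this uses the $m$-independence of the coefficients $\Ad(v)Y_j$---and (ii) that conditioning on $\eta$ is legitimate, i.e.\ that the resulting conditional process is still a Gaussian whose covariance is the random matrix $A^{\sigma,\eta}_M(0,t)$. Both reduce to the independence of $B_V$ and $B_M$ together with the triangular ``$\eta$-drives-$m$'' structure inherited from the semidirect product; once this is made precise, the rest is the standard Fubini-type unconditioning argument that is also the engine of \eqref{probform}.
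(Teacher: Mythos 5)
Your plan is correct and takes essentially the same probabilistic skew-product route as the paper, which disposes of the theorem in two lines by citing formula~\eqref{PMform1} together with Corollary~3.5 and formula~(3.1) of the authors' Heisenberg-group paper~\cite{JEE}; your SDE representation, the observation that the $V$-marginal is autonomous with coordinate time-changes $A^\sigma_{V,j}(0,t)$, the conditional Gaussian law of the $M$-component with covariance $A^{\sigma,\eta}_M(0,t)$, and the final Fubini-type unconditioning are precisely the ingredients that the cited results of~\cite{JEE} (and the skew-product framework of Taylor, cited as~\cite{Taylor}) package. In effect you have unwound the reference and re-derived it from first principles; the only items you leave as standard (existence and uniqueness of the pathwise fundamental solution, and the regular conditional probability step) are exactly what~\cite{DHstudia,Tanabe} and~\cite{Taylor} supply.
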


Theorem\refn{DecPs2}  yields a new estimate on $P^{\sigma}_{t,0}$ which is our second main result. In order to state this result let, for a continuous function $\sigma:[0,\infty)\to A=\R^k,$
\begin{equation}\label{ajsigma}
\begin{split}
A_{M,i}^\sigma(s,t)=&\int_s^te^{2\xi_{i}(\sigma(u))}du,\;\;i=1,\ldots,m,\\
A_{V,j}^\sigma(s,t)=&\int_s^te^{2\vartheta_{j}(\sigma(u))}du,\;\;j=1,\ldots,n,
\end{split}
\end{equation}
and
\begin{align*}
A_{M,\Sigma}^{\sigma}(s,t)=&\sum_{i=1}^mA_{M,j}^\sigma(s,t),&
A_{V,\Sigma}^{\sigma}(s,t)=&\sum_{j=1}^nA_{V,j}^\sigma(s,t),\\
A_{M,\Pi}^\sigma(s,t)=&\prod_{i=1}^mA_{M,j}^\sigma(s,t),&
A_{V,\Pi}^\sigma(s,t)=&\prod_{j=1}^nA_{V,j}^\sigma(s,t).
\end{align*}
We also set
\begin{equation*}
\begin{split}
A_{N,\Pi}^\sigma(0,t)&=A_{M,\Pi}^\sigma(0,t)A_{V,\Pi}^{\sigma}(0,t),\\
A_{N,\Sigma}^\sigma(0,t)&=A_{M,\Sigma}^{\sigma}(0,t)+A_{V,\Sigma}^\sigma(0,t).
\end{split}
\end{equation*}
We also let $k_o$ be the smallest non-negative integer such that
\blab{DefOfko}
(\ad_X)^{k_o+1}\rest{\mf{m}}=0,\fac X\in\mf{v}.
\elab
Note that if $k_o=0$, then $\mf{v}$ centralizes $\mf{m}$; hence $N$ is abelian. Thus our hypotheses imply that $k_o>0$.

The following theorem is a simplified version of Theorem~\ref{PreEst} which is one of our main results.

For, $a,b\in\R$ we write $a\wedge b=\min\{a,b\}.$
\begin{theorem}\label{ubpsigma} There are positive constants $C,D$  such that for all $(m,v)\in N=M\rtimes V,$
\begin{multline*}
P^\sigma_{t,0}(m,v)\leq C(A_{N,\Pi}^\sigma(0,t))^{-1\slash 2}(\|m\|^{1\slash (2k_0)}+1+A_{V,\Sigma}(0,t)^{1\slash 2})\\\times\exp\left(-D\frac{\|v\|^2}{A_{V,\Sigma}^\sigma(0,t)}-D\frac{\|m\|^{1\slash k_0}\wedge\|m\|^2}{A_{N,\Sigma}^\sigma(0,t)}\right).
\end{multline*}
\end{theorem}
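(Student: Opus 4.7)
My plan is to bound $P^\sigma_{t,0}(m,v)$ by using the skew-product formula of Theorem~\ref{DecPs2} to reduce the estimate to (i) an explicit Gaussian in $v$ coming from the endpoint distribution of the time-changed process $\eta$ on $V$, and (ii) a Brownian-bridge expectation of the Gaussian density in $m$ whose covariance is the random matrix $A_M^{\sigma,\eta}(0,t)$.

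First I extract the kernel pointwise from Theorem~\ref{DecPs2}: disintegrating the Wiener measure $\mathbf W^{V,\sigma}_v$ on the endpoint $\eta(t)$ yields
\begin{equation*}
P^\sigma_{t,0}(m,v)=q^{V,\sigma}_{0,t}(v,0)\,\e^{v\to 0}\!\left[\mathcal D(A_M^{\sigma,\eta}(0,t))\,e^{-\mathcal B(A_M^{\sigma,\eta}(0,t))(m)}\right],
\end{equation*}
where $q^{V,\sigma}_{0,t}(v,0)=\prod_{j=1}^n(2\pi A_{V,j}^\sigma)^{-1/2}e^{-v_j^2/(2A_{V,j}^\sigma)}$ is the endpoint density of $\eta$ and $\e^{v\to 0}$ is the Brownian-bridge expectation. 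Since $\sum_j v_j^2/A_{V,j}^\sigma\geq \|v\|^2/A_{V,\Sigma}^\sigma$, this factor is bounded by $C(A_{V,\Pi}^\sigma)^{-1/2}\exp(-\|v\|^2/(2A_{V,\Sigma}^\sigma))$, which delivers the $V$-part of the final estimate.

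Next I control the two random ingredients in the bridge expectation. Since $\mc B'=\mc B_1\cup\mc B_2$ is a Jordan--H\"older basis of $\mf n$ with $\ad_X$ strictly lower triangular for $X\in\mf n$, the matrix $B(u):=\Ad(\eta(u))\rest{\mf m}S^\sigma(u)$ is lower triangular in the basis $\mc B_1$ with diagonal entries $e^{\xi_i(\sigma(u))}$. An inductive Cauchy--Schwarz argument applied to the Gram matrix $A_M^{\sigma,\eta}(0,t)=2\int_0^t B(u)B(u)^*du$ then gives the $\eta$-independent lower bound $\det A_M^{\sigma,\eta}(0,t)\geq c\,A_{M,\Pi}^\sigma(0,t)$, hence $\mathcal D(A_M^{\sigma,\eta})\leq C(A_{M,\Pi}^\sigma)^{-1/2}$, supplying the remaining half of $(A_{N,\Pi}^\sigma)^{-1/2}$. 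By\refp{DefOfko}, $\Ad(\eta(u))\rest{\mf m}=\sum_{k=0}^{k_o}(\ad\eta(u))^k/k!$ is polynomial in $\eta(u)$ of degree at most $k_o$, so an entry-wise estimate on $\Tr A_M^{\sigma,\eta}(0,t)$ yields $\|A_M^{\sigma,\eta}(0,t)\|\leq C(1+(\eta^*)^{2k_o})A_{M,\Sigma}^\sigma(0,t)$ with $\eta^*:=\sup_{0\leq u\leq t}\|\eta(u)\|$, and consequently $\mathcal B(A_M^{\sigma,\eta})(m)\geq c\|m\|^2/((1+(\eta^*)^{2k_o})A_{M,\Sigma}^\sigma)$.

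It then remains to bound $\mathcal E:=\e^{v\to 0}[\exp(-c\|m\|^2/((1+(\eta^*)^{2k_o})A_{M,\Sigma}^\sigma))]$. Splitting on $\{\eta^*\leq R\}\cup\{\eta^*>R\}$ and invoking the Brownian-bridge supremum tail $\p(\eta^*>R)\leq C\exp(-cR^2/A_{V,\Sigma}^\sigma)$ (valid for $R$ above the natural scale $\sqrt{A_{V,\Sigma}^\sigma}+\|v\|$) yields $\mathcal E\leq\exp(-c\|m\|^2/((1+R^{2k_o})A_{M,\Sigma}^\sigma))+C\exp(-cR^2/A_{V,\Sigma}^\sigma)$. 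When $\|m\|\leq1$, the choice $R=0$ already produces the $\|m\|^2/A_{N,\Sigma}^\sigma$ branch; when $\|m\|>1$, balancing the two exponentials (formally $R^{2k_o+2}\sim\|m\|^2A_{V,\Sigma}^\sigma/A_{M,\Sigma}^\sigma$) and invoking the elementary inequality $(A_{V,\Sigma}^\sigma)^{k_o/(k_o+1)}(A_{M,\Sigma}^\sigma)^{1/(k_o+1)}\leq A_{N,\Sigma}^\sigma$ yields the $\|m\|^{1/k_o}/A_{N,\Sigma}^\sigma$ branch, while the polynomial prefactor $\|m\|^{1/(2k_o)}+1+(A_{V,\Sigma}^\sigma)^{1/2}$ arises as the Laplace correction attached to the density of $\eta^*$ at this optimal scale. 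The main technical obstacle is precisely this last step: the bridge supremum is non-Gaussian and its interaction with the rational function $1/(1+(\eta^*)^{2k_o})$ in the exponent calls for a careful Laplace-type evaluation to recover both the sharp exponent $\|m\|^{1/k_o}\wedge\|m\|^2$ and the stated polynomial prefactor; a secondary point is the uniform $\eta$-independent bound $\det A_M^{\sigma,\eta}\geq cA_{M,\Pi}^\sigma$, which relies essentially on the lower-triangular structure furnished by the Jordan--H\"older basis of $\mf n$.
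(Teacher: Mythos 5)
Your approach matches the paper's in all the structural steps: you invoke the skew-product formula of Theorem~\ref{DecPs2}, you bound $\mathcal D(A^{\sigma,\eta}_M)$ by an $\eta$-independent multiple of $A_{M,\Pi}^{\sigma\,-1/2}$ using the lower-triangularity furnished by the Jordan--H\"older basis (this is precisely the paper's Lemma~\ref{C(A)}), and you bound $\|A^{\sigma,\eta}_M(0,t)\|\leq C(1+(\eta^*)^{2k_o})A_{M,\Sigma}^\sigma$ via the polynomial nature of $\Ad$ (Lemma~\ref{normaasigmaeta}). Where you differ procedurally is the control of the remaining path integral. The paper does \emph{not} disintegrate into a bridge: it approximates the endpoint constraint by $\psi_\varepsilon$, partitions the path space into the integer level sets $\mathcal A_k=\{k-1\leq\Lambda^\eta<k\}$, bounds the joint probability $\mathbf W_0^{V,\sigma}(\eta\in\mathcal A_k,\ \eta(t)\in B_\varepsilon(v))$ via reflection-principle formulas (Lemmas~\ref{FiCases}--\ref{Largen}), and then sums the resulting geometric-Gaussian series, splitting the sum at $k\approx n_o+\|m\|^{1/(2k_o)}$; the $A_{V,\Sigma}^{1/2}$ prefactor and the $\|m\|^{1/k_o}$ exponent drop out of bounding this tail sum by a Gaussian integral. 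Your bridge formulation with a single split at a threshold $R$ is a legitimate repackaging of the same idea, and if $R$ is chosen of order $\|v\|_\infty+\|m\|^{1/(2k_o)}+1$ you recover essentially the two terms of the paper's Theorem~\ref{PreEst} (in fact without any polynomial prefactor in the tail term, which is if anything sharper).

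There is, however, one concrete misstep in your sketch of the final step: the claim that ``when $\|m\|\leq 1$, the choice $R=0$ already produces the $\|m\|^2/A_{N,\Sigma}^\sigma$ branch'' cannot be right. Since $\eta(0)=v$ the supremum $\eta^*$ is at least $\|v\|_\infty\geq 0$, so $\p(\eta^*>0)=1$ and the $R=0$ split is vacuous, giving only $\mathcal E\leq 1+\exp(-c\|m\|^2/A_{M,\Sigma}^\sigma)$. This is insufficient: one genuinely needs the decay $\exp(-D\|m\|^2/A_{N,\Sigma}^\sigma)$, and it is not majorized by a constant (e.g.\ when $A_{N,\Sigma}^\sigma$ is small with $\|m\|$ of order~$1$). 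You must retain a non-trivial $R$ of order $\|v\|_\infty+\|m\|^{1/(2k_o)}+1$ for \emph{all} $m$, so that the first split term carries the $(\|v\|+\cdots)^{2k_o}$ in the denominator (which you then trade against the Gaussian in $\|v\|$ supplied by the endpoint density and against $A_{M,\Sigma}\leq A_{N,\Sigma}$), and the tail term carries the $\|m\|^{1/k_o}/A_{V,\Sigma}^\sigma$ decay. With that correction, your Laplace-style evaluation can be completed and the route is sound; the rest of the proposal is accurate and faithful to the paper's argument.
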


It is interesting to compare this result with what is known in the general case.  The best general result that we are aware of in the literature is, when specialized to our current context, Theorem~\ref{evolutioninrn} below. (See~\cite{DHstudia,DHU} and~\cite{PUcm}.)  Theorem~\ref{ubpsigma} is an improvement in two respects.  First, it applies to all $(m,v)$, not just points in  a compact set not containing $e$. Secondly, it does not contain a term such as $\frac{\tau(x)}{4}$ in the exponent which is large when $\tau(x)$ is large. Of course, this term will be eventually dominated by the $\tau(x)^2$, but the point at which this domination takes place depends on the sizes of both $\tau(x)$ and of $A_{N,\Sigma}^\sigma(0,t)$ which are very hard to control.  We conjecture that a result such as Theorem~\ref{ubpsigma} holds in general.

\begin{theorem}\label{evolutioninrn}
Let $K\subset N$ be closed and $e\not\in K.$ Then there exist positive constants $C_1$, $C_2$, and $c$ such
that for every $x\in K$ and for every $t,$
$$
\ali
&P^\sigma_{t,0}(x)\leq C_1\left(\int_0^t(A_{N,\Pi}^\sigma(0,t))^{2\slash c}du\right)^{-c\slash 2}\exp\left(\frac{\tau(x)}{4}-\frac{\tau(x)^2}{C_2A_{N,\Sigma}^\sigma(0,t)}\right),
\eal
$$
where $\tau$ is a subadditive norm which is smooth on $N\setminus\{e\}$.
\end{theorem}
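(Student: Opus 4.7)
The plan is to combine Davies' exponential perturbation method with a time-inhomogeneous Nash (ultracontractivity) estimate for the family $\{\mathcal L_N^{\sigma(u)}\}_u$; this is essentially the strategy of \cite{DHstudia,DHU,PUcm}, which I will only sketch.

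First I would establish the on-diagonal bound
\[
\|P^\sigma_{t,0}\|_\infty \leq C_1\Big(\int_0^t A_{N,\Pi}^\sigma(0,u)^{2/c}\,du\Big)^{-c/2},
\]
where $c$ is the homogeneous dimension of $N$ relative to the dilations induced by the root gradation. The key point is that at each time $u$ the rescaled vector fields $e^{-\lambda_j(\sigma(u))}X_j$ generate a standard sub-Laplacian to which a classical Nash inequality applies, with unit-ball volume $\propto\prod_j e^{-\lambda_j(\sigma(u))}$. Integrating the resulting differential inequality for $\|P^\sigma_{t,s}\|_2^2$ in $s$ and bootstrapping from $L^2$ to $L^\infty$ gives the stated bound, the product $A_{N,\Pi}^\sigma$ arising from the accumulated volume scaling.

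Next I would fix a smooth subadditive homogeneous norm $\tau$ on $N\setminus\{e\}$ and, for $\mu>0$, consider the twisted kernel $Q^\mu_{t,s}(x,y)=e^{-\mu\tau(x)}P^\sigma_{t,s}(x,y)e^{\mu\tau(y)}$. Differentiating in $s$ and using $LP^\sigma_{t,0}=0$ with the Leibniz rule gives a parabolic equation for $Q^\mu_{t,s}$ whose perturbation of $\mathcal L_N^{\sigma(s)}$ is a sum of terms $e^{2\rho_j(\sigma(s))}\bigl(\mu^2(X_j\tau)^2 + 2\mu(X_j\tau)X_j - \mu X_j^2\tau\bigr)$, with $\rho_j$ ranging over the roots $\xi_i,\vartheta_j$. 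The pointwise bounds $|X_j\tau|,|X_j^2\tau|\leq C$ valid on $N\setminus\{e\}$ for a homogeneous gauge, together with a Gronwall estimate on the $L^2$ norm of $Q^\mu_{t,s}$ and duality, should produce
\[
P^\sigma_{t,0}(x)\leq C\|P^\sigma_{t,0}\|_\infty\exp\bigl(-\mu\tau(x)+C\mu^2 A_{N,\Sigma}^\sigma(0,t)+\mu A_{N,\Sigma}^\sigma(0,t)/4\bigr).
\]
Optimising $\mu$ in the quadratic term yields $-\tau(x)^2/(C_2 A_{N,\Sigma}^\sigma(0,t))$; the leftover first-order contribution gives the additive $\tau(x)/4$ in the exponent.

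The hard part will be the perturbation step: the first-order piece $2\mu(X_j\tau)X_j$ is a differential operator rather than a multiplier, so it must be symmetrised by a further conjugation (or handled via a Trotter--Kato splitting) before Gronwall can be applied cleanly. This is also the reason why the hypothesis $e\notin K$ cannot be dropped at this level of generality: near the identity $|X_j^2\tau|$ blows up like $1/\tau$ for a homogeneous gauge, so the global perturbation argument degenerates. The improvement in Theorem~\ref{ubpsigma} will come precisely from replacing this single global Davies argument by the meta-abelian skew-product of Theorem~\ref{DecPs2}, which handles the $M$ and $V$ variables separately and thereby eliminates both the restriction on $K$ and the parasitic $\tau(x)/4$ term.
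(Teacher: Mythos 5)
The paper does not prove Theorem~\ref{evolutioninrn}. It is quoted purely as ``the best general result\ldots in the literature,'' cited from~\cite{DHstudia}, \cite{DHU}, and~\cite{PUcm}, in order to set up the comparison with the new Theorem~\ref{ubpsigma}; the body of the paper contains no argument for it. So strictly speaking there is no in-paper proof to compare your sketch against, and what you have written is a reconstruction of the argument from the cited sources.

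As such a reconstruction your outline is essentially correct: a Davies-type exponential conjugation by a smooth subadditive gauge $\tau$, riding on top of a time-inhomogeneous Nash/ultracontractivity bound whose exponent $c$ is the homogeneous dimension, is the mechanism behind this kind of estimate, and you correctly read off both why $e\notin K$ is needed (singularity of the gauge at $e$) and where the parasitic $\tau(x)/4$ comes from (the leftover linear-in-$\mu$ term after optimizing the quadratic one). Your final paragraph also correctly identifies the paper's actual advance: the skew-product of Theorem~\ref{DecPs2} replaces the single global gauge conjugation by separate Gaussian computations on $M$ and $V$, which is exactly how Theorem~\ref{ubpsigma} removes both defects.

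Two quibbles if you were to carry this out. First, for a gauge homogeneous of degree one it is not only $X_j^2\tau$ that is singular near $e$: the first-order coefficients $X_j\tau$ are homogeneous of degree $1-d_j$ where $d_j$ is the homogeneous degree of $X_j$, hence already unbounded near $e$ whenever $d_j\ge 2$ (as happens for the $Y$-fields on $M$ in the meta-abelian case); the compactness hypothesis has to control both orders, not merely $X_j^2\tau$. Second, the exact coefficient $1/4$ in $\tau(x)/4$ does not fall out of the loose $\mu^2$-versus-$\mu$ tally you wrote (your accounting needs a specific relation between the quadratic and linear constants); it requires tracking the Gronwall constants carefully. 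Neither matters for the role the statement plays here, where it is only a quoted benchmark, but they are the places a blind reconstruction would actually have to work.
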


\subsection{Poisson kernel for $\mathcal L_\alpha$} As mentioned above, we expect improved estimates for $P^\sg_{t,0}$ to yield better estimates for objects derived from the heat semi-group such as the Poisson kernel.
As an illustration of this we use Theorem~\ref{ubpsigma} to prove Theorem~\ref{newupper} below that, in the current context, improves the estimates from~\cite{PUcm} and~\cite{PUpota}. (See \S~\ref{example}.) This result is our final ``main result.'' To state it we again require some notation.

Define
\begin{equation}\label{DefOfrho0}
\rho_0=\sum_{j=1}^d\lambda_j
\end{equation}
and set
\begin{equation}\label{DefOfX}
\chi(g)=\det(\Ad(g))=e^{\rho_0(a)},
\end{equation}
where
\begin{equation*}
\Ad(g)s=gsg^{-1},\;\;s\in S.
\end{equation*}
Let $ds$ be left-invariant Haar measure on $S.$ We have
\begin{equation*}
\int_Sf(sg)ds=\chi(g)^{-1}\int_Sf(s)ds.
\end{equation*}
Let
\begin{equation*}
A^+=\mathrm{Int}\{a\in\R^k:\lambda_j(a)\geq 0\text{ for }1\leq j\leq r\}.
\end{equation*}

If $\alpha\in A^+$ then there exists a \textit{Poisson kernel} $\nu$ for $\mathcal L_\alpha,$ \cite{D}. That is, there is a $C^\infty$ function $\nu$ on $N$ such that every bounded $\mathcal L_\alpha$-harmonic function $F$ on $S$ may be written as a {\em Poisson integral} against a bounded function $f$ on $S\slash A=N,$
\begin{equation*}
F(g)=\int_{S\slash A}f(gy)\nu(y)dy=\int_Nf(y)\check\nu^a(y^{-1}x)dy,\text{ where $g=(x,a),$}
\end{equation*}
and
\begin{equation*}
\check\nu^a(y)=\nu(a^{-1}y^{-1}a)\chi(a)^{-1}.
\end{equation*}
Conversely the Poisson integral of any $f\in L^\infty(N)$ is a bounded $\mathcal L_\alpha$-harmonic function.

For $t\in\R^+$ and $\wp\in A^+,$ let
\begin{equation*}
\delta_t^\wp=\Ad((\log t)\wp)|_N.
\end{equation*}
Then $t\mapsto\delta_t^\wp$ is a one parameter group of automorphisms of $N$ for which the corresponding eigenvalues on $\mathfrak{n}$ are all positive. It is known \cite{FS} that then $N$ has $\delta_t^\wp$-homogeneous norm: a non-negative continuous function $|\cdot|_\wp$ on $N$ such that $|n|_\wp=0$ if and only if $n=e$ and
\begin{equation*}
|\delta_t^\rho x|_\wp=t|x|_\wp.
\end{equation*}

For many years the best pointwise estimate in higher rank available in the literature was
\begin{equation*}
\nu(x)\leq C_\wp(1 + |x|_\wp)^{-\varepsilon}
\end{equation*}
for some $\varepsilon> 0,$ where $\wp\in A^+$ (\cite{D,DH}). These results, however, provide no way of determining  $\ve$. (For estimates for the Poisson kernel and its derivatives on rank-one $NA$ groups, i.e., $\dim A=1,$ see \cite{DHZ,DHU,U,BDH, DHcm}.)

A formula for determining an appropriate value of $\ve$ was provided by the authors in \cite{PUcm,PUpota}, although it is clear that the value of $\ve$ produced is far from best possible. Assume that the rank (dimension of $A$) is $k>1.$ Let $\nu$ be the Poisson kernel for the operator $\mathcal L_\alpha$  with $\alpha\in A^+.$

To simplify our notation we write $\Lambda$ to denote the set of roots
\begin{equation*}
\Lambda=\Xi\cup\Theta,
\end{equation*}
where
\begin{equation*}
\begin{split}
\Xi=&\{\xi_1,\ldots,\xi_m\},\\
\Theta=&\{\vartheta_1,\ldots,\vartheta_n\}.\\
\end{split}
\end{equation*}
For $\Lam_o\subset\Lam$ and $a\in A^+$ we set
\blab{DefOfgamLam}
\ali
\g_{\Lam_o}(a)&=\min_{\lam\in\Lam_o}\lam(a),\\
\o\g_{\Lam_o}(a)&=\min_{\lam\in\Lam_o}\fn{\lam(a)}{\lam^2}.\\
\eal
\elab

In this setting our final main result is the following.
\begin{theorem}\label{newupper}
Let $\nu$ be the Poisson kernel for the operator $\mathcal L_\alpha,$ defined in \eqref{defofl2}, with $\alpha\in A^+.$
Under the above assumptions on $N,$ for every $\wp\in A^+$ and $\varepsilon>0$ there exists a constant $C=C_{\wp,\varepsilon}>0$
such that
\begin{equation*}
\nu(m,v)\leq \\C(1+|(m,v)|_\wp)^{-\gamma},
\end{equation*}
where
\begin{equation*}
\gamma=
\begin{cases}
\g_\Theta(\rho)\o\g_\Theta(\alpha)=:\gamma_1&\text{for $\|m\|<\varepsilon,$ $\|v\|\geq\varepsilon,$}\\
\g_\Lam(\rho)\o\g_\Lam(\alpha)=:\gamma_2&\text{for $\|m\|\geq\varepsilon,$ $\|v\|<\varepsilon,$}\\
\max\{\gamma_1,\gamma_2,\frac{1}{2}\g_\Theta(\rho)\o\g_\Theta(\alpha)+
\frac{1}{2}\g_\Lam(\rho)\o\g_\Lam(\alpha)\}&\text{for $\|m\|\geq\varepsilon,$ $\|v\|\geq\varepsilon.$}\\
\end{cases}
\end{equation*}
\end{theorem}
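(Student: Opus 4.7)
The plan is to express $\nu$ as a time-integrated expectation involving the evolution kernel $P^\sg_{t,0}$ averaged over drift-Brownian paths $\sg$ generated by $\De_\al$, substitute the bound from Theorem\refn{ubpsigma}, and reduce the resulting expectation to moment bounds for exponential functionals of one-dimensional Brownian motion. First, as in \cite{D,PUcm,PUpota}, one has a probabilistic representation of the form
\begin{equation*}
\nu(m,v)\leq C\int_0^\8\e_0\!\left[P^\sg_{t,0}(m,v)\,\Psi(\sg(t))\right]dt,
\end{equation*}
where $\Psi$ is a bounded density capturing the fact that, since $\al\in A^+$, the process $\sg(t)$ drifts in a direction in which every root eventually becomes very negative, causing all the relevant functionals to converge.

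Plugging Theorem\refn{ubpsigma} into the integrand reduces the task to bounding
\begin{equation*}
\int_0^\8\e_0\!\left[\fn{\|m\|^{1/(2k_0)}+1+(A^\sg_{V,\Sigma})^{1/2}}{(A^\sg_{N,\Pi})^{1/2}}\exp\!\left(-D\fn{\|v\|^2}{A^\sg_{V,\Sigma}}-D\fn{\|m\|^{1/k_0}\wedge\|m\|^2}{A^\sg_{N,\Sigma}}\right)\right]dt,
\end{equation*}
with all functionals evaluated on $(0,t)$. For each $\lam\in\Lam$, the scalar process $\lam(\sg(u))$ is a real Brownian motion of variance rate $2\lam^2$ with drift $-\lam(\al)<0$; hence $A^\sg_\lam(0,\8):=\int_0^\8 e^{2\lam(\sg(u))}du$ is a.s.\ finite, and a Dufresne-type computation yields, for every $\delta>0$,
\begin{equation*}
\e_0\!\left[e^{-c/A^\sg_\lam(0,\8)}\right]\leq C_\delta\,c^{-\lam(\al)/\lam^2+\delta},\qquad c\geq 1.
\end{equation*}
From $A^\sg_{V,\Sigma}\leq n\max_j A^\sg_{V,j}$ we have $e^{-c/A^\sg_{V,\Sigma}}\leq e^{-c/(nA^\sg_{V,j})}$ for any $j$; taking the worst $j$ converts the $\|v\|^2$-factor into $\|v\|^{-2\o\g_\Theta(\al)+\delta}$. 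Similarly, $A^\sg_{N,\Sigma}\leq(m+n)\max_{\lam\in\Lam}A^\sg_\lam$ converts the $\|m\|$-factor into $\|m\|^{-(1/k_0)\o\g_\Lam(\al)+\delta}$ when $\|m\|\geq 1$. The polynomial prefactor $(A^\sg_{N,\Pi})^{-1/2}(\|m\|^{1/(2k_0)}+1+(A^\sg_{V,\Sigma})^{1/2})$ is handled by the elementary inequality $\sup_{x>0}x^{-s}e^{-c/x}\leq C_s c^{-s}$ for arbitrarily small $s>0$, contributing only a polynomial loss captured by $\delta$.

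To convert the resulting $\|m\|$- and $\|v\|$-estimates into the homogeneous-norm bound, I use the $\delta^\wp_t=\Ad((\log t)\wp)|_N$-homogeneity: if $|(m,v)|_\wp=r$, then in the respective regimes $\|v\|\geq c\,r^{\g_\Theta(\wp)}$ or $\|m\|\geq c\,r^{\g_\Xi(\wp)}$, and (with the $\rho$ of the statement absorbing the factor of $k_0$ and the passage from $\wp$ to the modular-type functional) the term $\|v\|^{-\o\g_\Theta(\al)}$ becomes $r^{-\g_\Theta(\rho)\o\g_\Theta(\al)}=r^{-\gamma_1}$, while $\|m\|^{-\o\g_\Lam(\al)/k_0}$ becomes $r^{-\g_\Lam(\rho)\o\g_\Lam(\al)}=r^{-\gamma_2}$. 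The three cases in the statement correspond to which exponential in the integrand provides useful decay: for $\|m\|<\ve,\,\|v\|\geq\ve$ only the $V$-side works, giving $\gamma_1$; for $\|m\|\geq\ve,\,\|v\|<\ve$ only the $M$-side works, but via all of $\Lam=\Xi\cup\Theta$, giving $\gamma_2$; and for $\|m\|,\|v\|\geq\ve$ one may split the Gaussian symmetrically as $e^{(\cdot)/2}e^{(\cdot)/2}$ and apply both estimates at half strength, producing the mixed exponent $\tfrac{\gamma_1+\gamma_2}{2}$. Taking the maximum over the three available bounds gives $\gamma$.

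The hardest part is the second step: absorbing the polynomial prefactor without degrading the exponent. This couples the $M$- and $V$-sides, and is resolved only by selecting the root realizing the infimum in $\o\g_\Lam(\al)$ or $\o\g_\Theta(\al)$ and carefully balancing the Laplace-transform bound against the elementary $\sup x^{-s}e^{-c/x}$ estimate. This balancing also forces the small loss that the statement absorbs into the arbitrary $\ve$ (and into the constant $C_{\wp,\ve}$).
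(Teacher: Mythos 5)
Your high-level strategy is right — plug Theorem~\ref{ubpsigma} into a probabilistic representation of $\nu$, use Dufresne-type bounds for the exponential functionals, then convert via homogeneity — but several of the concrete steps are incorrect or would not yield the theorem as stated.

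First, the starting representation is not the one the paper uses, and as written it is not justified. The paper works from the \emph{limit} identity~\eqref{r}: $(\nu^{s\wp},f)=\lim_{t\to\infty}(\e_{s\wp}\check P^\sigma_{t,0},f)$, together with the scaling relation $\nu^{s\wp}(x)=\nu((s\wp)^{-1}x(s\wp))\chi(s\wp)^{-1}$ of~\eqref{defnua}. There is no $\int_0^\infty\,dt$ and no auxiliary density $\Psi$; the time integral you wrote would correspond to a Green-function (potential) representation, not to the Poisson kernel, and you give no argument for it.

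Second, your functional estimate $\e_0[e^{-c/A_\lambda^\sigma(0,\infty)}]\le C_\delta\,c^{-\lambda(\alpha)/\lambda^2+\delta}$ carries a $\delta$-loss, and you propose to absorb this into ``the arbitrary $\ve$ of the statement.'' But the $\ve$ in Theorem~\ref{newupper} is a geometric parameter defining the three regions $\{\|m\|<\ve\}$, $\{\|v\|<\ve\}$, etc.; it is \emph{not} a slack in the exponent. The exponents $\gamma_1,\gamma_2$ in the statement are exact. In fact no $\delta$-loss is necessary: by Lemma~\ref{lperp}, $A_\lambda^\sigma(0,\infty)$ has an inverse gamma density, so $1/A_\lambda^\sigma$ is gamma-distributed and $\e e^{-c/A_\lambda^\sigma}\le C c^{-\lambda(\alpha)/\lambda^2}$ exactly. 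This is what the paper uses (via \cite[Lemma~6.2]{PUcm}).

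Third, your treatment of the polynomial prefactor $(A^\sigma_{N,\Pi})^{-1/2}(\,\cdot\,)$ by the scalar inequality $\sup_{x>0}x^{-s}e^{-c/x}\le C_sc^{-s}$ does not apply: the prefactor involves $A^\sigma_{N,\Pi}$ while the exponential involves $A^\sigma_{V,\Sigma}$ and $A^\sigma_{N,\Sigma}$; these are different, correlated functionals. The paper instead separates them by Cauchy--Schwarz (see~\eqref{*1}), bounding $(\e_{s\wp}(A_0^{-1})^2)^{1/2}$ by finite negative moments of the inverse-gamma functionals (Lemma~\ref{lperp}) and the exponential factor separately. This also produces the $\frac12$-factors that yield the mixed exponent $\frac12\gamma_1+\frac12\gamma_2$ on the region $\|m\|,\|v\|\ge\ve$; your ``split the Gaussian symmetrically'' gesture is pointing in the right direction, but without the explicit Cauchy--Schwarz step you have not established it. Finally, the paper's cleaner route is to first prove the compact-set estimate Theorem~\ref{thnua} on $\{|x_o|_\wp=1\}$ (so only $s$, not $\|m\|$ and $\|v\|$, appears in the bound), and then dilate via $\nu(\delta^\wp_{\exp(-s)}x_o)=e^{\rho_0(s\wp)}\nu^{s\wp}(x_o)$; your attempt to pass directly from $\|m\|$- and $\|v\|$-bounds to the homogeneous norm is not carefully justified (e.g.\ the inequality $\|v\|\gtrsim r^{\gamma_\Theta(\wp)}$ requires the $v$-coordinates to realize the homogeneous norm, which is exactly what working on the compact set $\{|x_o|_\wp=1\}\cap K_i$ sidesteps).
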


We provide an example in \S\ref{example} demonstrating that this result does in fact provide a sharper estimate that the estimates found in~\cite{PUcm} and in~\cite{PUpota}.

\subsection{Structure of the paper}
The outline of the rest of the paper is as follows:

In \S\ref{efobm} and \S\ref{Sectwotw} we recall some basic facts about exponential functionals of Brownian motion and some estimates for the joint distribution of the maximum of the absolute value of the Brownain motion on the time interval $[0,t]$ and its position at time $t.$ In \S\ref{SSAbel} we give a formula for the evolution kernels in the special case that the nilpotent group is $\R^n.$   In \S\ref{PKE} we recall the construction of the Poisson kernel $\nu$ on $N$ and its extension $\nu^a(x)$ to $N\times\R^k.$
In \S\ref{eonM} and \S\ref{eonV} we consider diffusions on $M$ and $V$ respectively.
Theorem\refn{DecPs2} is proved in \S\ref{MAG}. Our main results are proved in \S\ref{eonN} (Theorem~\ref{ubpsigma}), and  \S\ref{parpkernel}, (Theorem~\ref{newupper}).
In \S\ref{example} we compare the estimate from Theorem~~\ref{newupper} with our previous results from \cite{PUcm,PUpota}.
\section{Preliminaries}
\subsection{Exponential functionals of Brownian motion}\label{efobm}
Let $b(s),$ $s\geq0,$ be the Brownian motion on $\R$ staring from
$a\in\R$ and normalized so that
\begin{equation}\label{gd}
\e_af(b(s))=\int_{\R}f(x+a)\frac{1}{\sqrt{4\pi s}}e^{-x^2\slash 4s}dx.
\end{equation}
Hence $\e b(s)=a$ and $\Var b(s)=2s.$
\begin{note}
Our normalization of the Brownian motion $b(s)$ is different than that typically used by probabilists who tend to assume that $\Var b(s)=s.$
\end{note}

For $d>0$ and $\mu>0$ we define the following exponential functional
\begin{equation}\label{perp}
I_{d,\mu}=\int_0^\infty e^{d(b(s)-\mu s)}ds.
\end{equation}
Such functionals are called \textit{perpetual functionals} in financial mathematics and they play an important role there (see e.g. \cite{Du,Yor-skladanka}).
\begin{theorem}[Dufresne, \cite{Du}]\label{tD}
Let $b(0)=0.$ Then the functional $I_{2,\mu}$ is distributed as
$(4\gamma_{\mu\slash 2})^{-1},$ where $\gamma_{\mu\slash 2}$
denotes a gamma random variable with parameter $\mu\slash 2,$
i.e., $\gamma_{\mu\slash 2}$ has a density
$(1\slash\Gamma(\mu\slash
2))x^{\frac{\mu}{2}-1}e^{-x}1_{[0,+\infty)}(x).$
\end{theorem}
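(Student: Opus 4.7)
The plan is to prove Dufresne's identity via the Lamperti time-change representation, which converts geometric Brownian motion with drift into a Bessel process and turns the perpetual functional into the hitting time of the origin. First I would apply It\^o's formula to $X(t)=\exp(b(t)-\mu t)$, being careful that under the paper's normalization $d\langle b\rangle_s=2\,ds$, so that
\begin{equation*}
dX(t)=X(t)\,db(t)+(1-\mu)X(t)\,dt.
\end{equation*}
Since $\mu>0$, the strong law for Brownian motion gives $X(t)\to 0$ a.s. as $t\to\infty$, and $I_{2,\mu}=\int_0^\infty X(s)^2\,ds<\infty$ a.s.

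Next I would introduce the additive functional $A(t)=\int_0^t X(s)^2\,ds$ with right-continuous inverse $\tau(u)=\inf\{t:A(t)>u\}$, and set $R(u)=X(\tau(u))$. A time-change computation (Dubins-Schwarz for the martingale part and the chain rule for the drift, using $d\tau=R(u)^{-2}du$) shows that $R$ satisfies
\begin{equation*}
dR(u)=\sqrt{2}\,d\beta(u)+\frac{1-\mu}{R(u)}\,du
\end{equation*}
for some standard Brownian motion $\beta$. After the deterministic rescaling $\widetilde R(u)=R(u/2)/\sqrt{2}$ this becomes the SDE of a Bessel process of index $-\mu/2$ (equivalently, dimension $2-\mu$) in the probabilist normalization. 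Because $X(\infty)=0$ corresponds to $\widetilde R$ reaching the origin, the lifetime $A(\infty)=I_{2,\mu}$ is (up to the explicit time-scale factor just introduced) the hitting time $T_0$ of $0$ by $\widetilde R$ started from $R(0)/\sqrt{2}=1/\sqrt{2}$.

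Finally I would invoke the classical formula for the law of $T_0$ for a Bessel process of negative index $-\nu$ started from $r$: $T_0\stackrel{d}{=}r^2/(2\gamma_\nu)$, where $\gamma_\nu$ has density $\Gamma(\nu)^{-1}x^{\nu-1}e^{-x}\mathbf 1_{[0,\infty)}(x)$. Plugging in $\nu=\mu/2$ and $r=1/\sqrt 2$ and unwinding the deterministic time change yields $I_{2,\mu}\stackrel{d}{=}(4\gamma_{\mu/2})^{-1}$, which is exactly the claim. The main obstacle is bookkeeping: the paper's convention $\Var b(s)=2s$ differs from the usual probabilist normalization, so a constant mistake at the time-change step or in the Bessel rescaling would shift the parameter of the limiting gamma variable. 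The identification of $T_0$'s distribution itself can be taken from standard references (e.g.\ via the fact that $1/\widetilde R(\cdot)^2$ is, up to time change, the square of a Bessel process of positive index, whose passage to $\infty$ has an inverse-gamma law), so the computational content really reduces to tracking the single scaling factor that converts the index $-\mu/2$ into the parameter $\mu/2$ of $\gamma_{\mu/2}$ and the denominator $4$ in $(4\gamma_{\mu/2})^{-1}$.
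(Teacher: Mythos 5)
The paper contains no proof of Theorem~\ref{tD}: it is quoted from Dufresne \cite{Du}, with the remark that proofs may be found in \cite{D-SGD,DHZ} or \cite{Yor}. So there is nothing internal to compare with, and your Lamperti/Bessel route is one of the standard proofs (essentially Yor's). Its skeleton is sound under the paper's normalization $\Var b(s)=2s$: the It\^o computation $dX=X\,db+(1-\mu)X\,dt$ is correct, the time change $R(u)=X(\tau(u))$ with $d\tau=R^{-2}\,du$ does give $dR(u)=\sqrt2\,d\beta(u)+\frac{1-\mu}{R(u)}\,du$ up to time $A(\infty)$, and $A(\infty)=I_{2,\mu}$ is precisely the time at which $R$ reaches $0$ (since $X(t)\to0$ a.s. and $I_{2,\mu}<\infty$ a.s. because $b(s)-\mu s\le -\mu s/2$ eventually).

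The one genuine defect is in the rescaling step, and it is exactly the constant you flagged as the risk. The process $\widetilde R(u)=R(u/2)/\sqrt2$ does \emph{not} satisfy the probabilist-normalized Bessel equation: combining the space and time scalings gives $d\widetilde R(u)=\tfrac{1}{\sqrt2}\,d\beta'(u)+\frac{1-\mu}{4\widetilde R(u)}\,du$, i.e.\ a dimension-$(2-\mu)$ Bessel run at half speed. If one takes your assertion at face value (standard Bessel of index $-\mu/2$ started from $1/\sqrt2$, whose hitting time of $0$ is $T_0=2A(\infty)$ because of the $u/2$), then $T_0\stackrel{d}{=}r^2/(2\gamma_{\mu/2})$ yields $I_{2,\mu}\stackrel{d}{=}(8\gamma_{\mu/2})^{-1}$, not the asserted $(4\gamma_{\mu/2})^{-1}$. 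The repair is to apply only one of the two scalings: either $R(u)/\sqrt2$, which is a standard Bessel of dimension $2-\mu$ started at $1/\sqrt2$ with hitting time of $0$ equal to $A(\infty)=I_{2,\mu}$, or $R(u/2)$, which is a standard Bessel started at $1$ with hitting time $2I_{2,\mu}$; in either case the classical law $T_0\stackrel{d}{=}r^2/(2\gamma_{\mu/2})$ gives $I_{2,\mu}\stackrel{d}{=}(4\gamma_{\mu/2})^{-1}$, as claimed. A final small caveat: for $\mu\ge2$ the dimension $2-\mu$ is nonpositive, so ``Bessel process'' must be understood as the solution of the SDE up to its hitting time of $0$; the hitting-time identity is valid for every index $-\nu<0$ in this sense, which is the classical fact you invoke from the references.
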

The proof of Dufresne's theorem
can be found in many places. See for example \cite{D-SGD,DHZ} or
the survey paper \cite{Yor} and the references therein.

The \textit{inverse gamma density} on $(0,+\infty),$ with respect to $dx,$ is defined by
\begin{equation*}
h_{\mu,\gamma}(x)=C_{\mu,\gamma}x^{-\mu-1}e^{-\frac{\gamma}{x}}1_{(0,+\infty)}(x),
\end{equation*}
where $C_{\mu,\gamma}$ is the normalizing constant such that $\int_0^\infty h_{\mu,\gamma}(x)dx=1.$

As a corollary of Theorem~\ref{tD}, by scaling the Brownian
motion and changing the variable, we get the following theorem.

\begin{theorem}\label{tDHZ}
Let $b(0)=a.$ Then
\begin{equation*}
\e_af(I_{d,\mu})=c_{d,\mu}e^{\mu a}\int_0^\infty f(x)x^{-\mu\slash
d}\exp\left(-\frac{e^{da}}{d^2x}\right)\frac{dx}{x}.
\end{equation*}
In particular, $I_{2,\mu}$ has the inverse gamma density
$h_{\mu\slash 2,1\slash 4}.$
\end{theorem}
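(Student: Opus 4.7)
The plan is to reduce the general case to the special case $d=2$, $a=0$ already handled by Dufresne's Theorem~\refn{tD}, via a shift and a Brownian scaling, and then to perform a routine change of variables on the inverse gamma density.

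First I would separate the starting point. Writing $b(s)=a+\tl b(s)$ where $\tl b$ is a Brownian motion (in the paper's normalization $\Var\tl b(s)=2s$) starting from $0$, one obtains
\begin{equation*}
I_{d,\mu}=e^{da}\int_0^\infty e^{d(\tl b(s)-\mu s)}\,ds.
\end{equation*}
Next I would invoke the Brownian scaling identity. Since $b(c^2s)\stackrel{d}{=}c\,b(s)$ as processes (this uses only that $b$ is a constant multiple of a standard Brownian motion), taking $c=d/2$ gives $d\,\tl b(s)\stackrel{d}{=}2\,\tl b(d^2s/4)$. Substituting $u=d^2s/4$ in the integral above and simplifying yields
\begin{equation*}
\int_0^\infty e^{d(\tl b(s)-\mu s)}\,ds\stackrel{d}{=}\fn{4}{d^2}\int_0^\infty e^{2\tl b(u)-(4\mu/d)u}\,du=\fn{4}{d^2}I_{2,2\mu/d},
\end{equation*}
so, conditional on starting from $a$, $I_{d,\mu}$ is distributed as $(4e^{da}/d^2)\,Y$, where $Y$ is distributed as $I_{2,2\mu/d}$ started from $0$.

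By Dufresne's theorem, $Y\stackrel{d}{=}(4\gamma_{\mu/d})^{-1}$, and a one-line computation (change of variable $y=1/(4\gamma)$ in the gamma density) shows $Y$ has the inverse gamma density $h_{\mu/d,1/4}(y)=C_{\mu/d,1/4}\,y^{-\mu/d-1}e^{-1/(4y)}$. The final step is to push this density forward under the linear map $y\mapsto x=(4e^{da}/d^2)y$. A direct substitution shows that $\beta/(4x)=e^{da}/(d^2x)$ with $\beta=4e^{da}/d^2$, and that the Jacobian factor produces $(4/d^2)^{\mu/d}e^{\mu a}$, so the density of $I_{d,\mu}$ at $x$ equals $c_{d,\mu}\,e^{\mu a}\,x^{-\mu/d-1}\exp(-e^{da}/(d^2x))$ with $c_{d,\mu}=C_{\mu/d,1/4}(4/d^2)^{\mu/d}$, which is the desired formula. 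The ``in particular'' statement is obtained by specializing to $d=2$, $a=0$, in which case $c_{2,\mu}=C_{\mu/2,1/4}$ and the density is exactly $h_{\mu/2,1/4}$.

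There is really no deep obstacle: the only care needed is bookkeeping of the nonstandard normalization $\Var b(s)=2s$, which subtly modifies the exponent and the constant $c_{d,\mu}$; both in the scaling step (where the factor $d/2$ rather than $d$ appears) and in inverting the gamma to recover the $1/(4y)$ in the exponential. Everything else is a mechanical change of variables.
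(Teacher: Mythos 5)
Your argument is correct, and it is exactly the route the paper indicates in its one-line proof ("as a corollary of Theorem~\ref{tD}, by scaling the Brownian motion and changing the variable"): shift $b(s)=a+\tilde b(s)$, use the scaling identity $b(c^2 s)\stackrel{d}{=}c\,b(s)$ with $c=d/2$ to reduce to $I_{2,2\mu/d}$, invoke Dufresne's theorem, and push the resulting inverse-gamma density forward under $y\mapsto(4e^{da}/d^2)y$. You have supplied, correctly, the bookkeeping that the paper leaves to the reader.
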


We will also need the following lemma.
\begin{lemma}\label{lperp}
Let $\sigma(u)=b(u)-2\alpha u$ be the $k$-dimensional Brownian motion with a drift, $d>0,$ and let $\ell\in(\R^k)^*$ be such that $\ell(\alpha)>0.$ Then
\begin{equation*}
\e_af\left(\int_0^\infty e^{d\ell(\sigma(u))}du\right)=c_{d,\ell,\alpha}e^{\gamma\ell(a)}\int_0^\infty f(u)u^{-\gamma\slash d}\exp\left(-\frac{e^{d\ell(a)}}{2d^2\ell^2u}\right)\frac{du}{u},
\end{equation*}
where $\gamma=2\ell(\alpha)\slash\ell^2.$

In particular, the functional $\int_0^\infty e^{d\ell(b(u)-2\alpha u)}du$ has the inverse gamma density $h_{2\ell(\alpha)\slash (d\ell^2),1\slash(d^2\ell^2)}.$
\end{lemma}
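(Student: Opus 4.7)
The strategy is to reduce the $k$-dimensional exponential functional to the one-dimensional perpetual functional of Theorem~\ref{tDHZ} by projecting the process $\sigma$ along the linear functional $\ell$.

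First, observe that
\[
\ell(\sigma(u))=\ell(b(u))-2\ell(\alpha)\,u.
\]
Since the coordinates of $b(u)$ are independent one-dimensional Brownian motions (with the paper's normalization $\Var b_i(u)=2u$), the process $\ell(b(u))$ is a continuous Gaussian process with independent increments, initial value $\ell(a)$, and $\Var\ell(b(u))=2\ell^2 u$. Hence it is a Brownian motion rescaled in time: $\ell(b(u))=\eta(\ell^2 u)$, where $\eta$ is a one-dimensional Brownian motion (in the same normalization) starting from $\ell(a)$.

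Performing the time change $s=\ell^2 u$ in the integral yields
\[
\int_0^\infty e^{d\ell(\sigma(u))}\,du=\frac{1}{\ell^2}\int_0^\infty e^{d(\eta(s)-\gamma s)}\,ds=\frac{1}{\ell^2}\,I_{d,\gamma}(\eta),
\]
with $\gamma=2\ell(\alpha)/\ell^2>0$, which is well-defined by the hypothesis $\ell(\alpha)>0$. Now apply Theorem~\ref{tDHZ} to $I_{d,\gamma}(\eta)$, with starting point $\ell(a)$ and test function $x\mapsto f(x/\ell^2)$; then substitute $u=x/\ell^2$ inside the resulting integral, using $dx/x=du/u$ and $x^{-\gamma/d}=\ell^{-2\gamma/d}u^{-\gamma/d}$. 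The powers of $\ell^2$ are absorbed into a new normalizing constant $c_{d,\ell,\alpha}$, and the claimed formula follows. The inverse gamma density in the particular case is then read off by specializing $a=0$.

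The main obstacle is purely bookkeeping: carefully tracking the two scaling factors---one from the identification of $\ell(b(u))$ with a time-rescaled one-dimensional Brownian motion, the other from the substitution inside the integral on the right-hand side of Theorem~\ref{tDHZ}---so that the exponent $e^{d\ell(a)}/(d^2\ell^2 u)$ and the power $u^{-\gamma/d}$ emerge with the correct factors of $\ell^2$.
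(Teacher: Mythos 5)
Your proof is correct and follows the same route the paper indicates: project the $k$-dimensional process along $\ell$, time-rescale so that $\ell(b(u))$ becomes a standard (paper-normalized) one-dimensional Brownian motion, and invoke Theorem~\ref{tDHZ}. The paper's own proof is just the one-line citation ``It follows from Theorem~\ref{tDHZ}. See [PUcm, Lemma~5.4] for details,'' so you have supplied exactly the bookkeeping that the authors defer to the reference.

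One point worth flagging: your careful derivation actually exposes a likely typo in the lemma as printed. Tracking the scalings exactly as you do, the exponent that emerges is $\exp\left(-\frac{e^{d\ell(a)}}{d^{2}\ell^{2}u}\right)$, without the extra factor of $2$ in the denominator. This is the form you wrote, and it is the only form consistent with the lemma's own ``In particular'' conclusion, which identifies the density (at $a=0$) as $h_{2\ell(\alpha)/(d\ell^{2}),\,1/(d^{2}\ell^{2})}$: that density is proportional to $u^{-\gamma/d-1}e^{-1/(d^{2}\ell^{2}u)}$, matching your result but not the displayed integrand with the $2$. So you did not miss anything; the discrepancy is in the source, not in your calculation.
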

\begin{proof}
It follows from Theorem~\ref{tDHZ}. See \cite[Lemma~5.4]{PUcm} for details.
\end{proof}

\subsection{Some probabilistic lemmas}\label{Sectwotw}
If $b(t)$ is the Brownian motion starting from $x\in\R$ then the corresponding Wiener measure on the space $ C([0,\infty),\R)$ is denoted by $\mathbf W_x.$
The following lemma follows from formula 1.1.4 on p.~125 in \cite{Borodin}.
\begin{lemma}\label{bmsup}
There exists a constant $c>0$ such that for all $x\leq y,$
\begin{equation*}
\mathbf W_x(\sup_{0<s<t}|b(s)|\geq y)\leq ce^{-(y-x)^2/ 4t}.
\end{equation*}
\end{lemma}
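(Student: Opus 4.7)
The plan is to derive this estimate from the reflection principle for one-dimensional Brownian motion together with the standard Gaussian tail bound, which is exactly how Borodin-Salminen's formula 1.1.4 is obtained. First I would split the two-sided event
\begin{equation*}
\left\{\sup_{0<s<t}|b(s)|\geq y\right\}\subseteq\left\{\sup_{0<s<t}b(s)\geq y\right\}\cup\left\{\inf_{0<s<t}b(s)\leq -y\right\}
\end{equation*}
so that each summand can be handled by a one-sided barrier argument.

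For the upward excursion, the reflection principle applied to the Brownian motion with $\Var b(s)=2s$ gives, for $y\geq x$,
\begin{equation*}
\mathbf W_x\!\left(\sup_{0<s<t}b(s)\geq y\right)=2\,\mathbf W_x(b(t)\geq y)=2\int_y^\infty\frac{1}{\sqrt{4\pi t}}e^{-(u-x)^2/(4t)}\,du,
\end{equation*}
and the elementary tail estimate $\int_r^\infty e^{-u^2/2}du\leq r^{-1}e^{-r^2/2}$ (applied with $r=(y-x)/\sqrt{2t}$) produces a bound of the desired form $c\,e^{-(y-x)^2/(4t)}$. For the downward excursion I would invoke the reflection $b(s)\mapsto 2x-b(s)$, or equivalently symmetry of Brownian motion, to rewrite the probability as $\mathbf W_{-x}(\sup_{0<s<t}b(s)\geq y)$, and repeat the argument to obtain $c\,e^{-(y+x)^2/(4t)}$.

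Adding the two contributions and observing that $(y+x)^2\geq (y-x)^2$ in the regime the lemma is actually applied (namely when $x$ is at least as large as $-y$, which is the content of $x\leq y$ combined with the requirement that the right-hand side be nontrivial), both terms fit under the single bound $c\,e^{-(y-x)^2/(4t)}$. I expect no serious obstacle: the main thing to verify carefully is that the prefactor $1/r$ in the Gaussian tail estimate is absorbed correctly, which is handled by enlarging $c$ to cover the small-$r$ regime where the event probability is in any case bounded by one. As the authors note, this is a direct consequence of the explicit formula for the joint law of $(\sup_{[0,t]}b,b(t))$ tabulated in \cite{Borodin}, so beyond these two steps no additional work is required.
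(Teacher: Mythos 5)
Your proposal takes a genuinely different route from the paper: the paper's ``proof'' is nothing more than a citation to the tabulated joint-law formula 1.1.4 in Borodin--Salminen, while you re-derive the bound from scratch via the union bound, the reflection principle, and the Gaussian tail estimate, handling the $1/r$ prefactor by splitting into the $r\gtrsim 1$ regime (where $1/r\le 1$) and the $r\lesssim 1$ regime (where the probability is trivially $\le 1$ and $e^{-r^2/2}$ is bounded below). Each of those ingredients is correct, and what you buy is an elementary self-contained argument instead of a black-box citation.

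There is, however, a genuine gap in the very last step. You assert that $(y+x)^2\ge(y-x)^2$ ``in the regime $x\ge -y$.'' Expanding, $(y+x)^2-(y-x)^2=4xy$, so for $y\ge 0$ the inequality is equivalent to $x\ge 0$, not to $x\ge -y$. On the strip $-y<x<0$ the inequality reverses, and the sum of your two one-sided contributions, $c\bigl(e^{-(y-x)^2/(4t)}+e^{-(y+x)^2/(4t)}\bigr)$, is dominated by $e^{-(y+x)^2/(4t)}$, which is not $O\bigl(e^{-(y-x)^2/(4t)}\bigr)$ uniformly in $t$; indeed the ratio $e^{-(y+x)^2/(4t)}/e^{-(y-x)^2/(4t)}=e^{-xy/t}\to\infty$ as $t\to 0$ when $xy<0$. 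This is not a defect of your method but of the lemma as stated: a Brownian path started at $x\in(-y,0)$ reaches the nearer barrier $-y$ at rate $\sim e^{-(y-|x|)^2/(4t)}$, which decays strictly more slowly than $e^{-(y-x)^2/(4t)}=e^{-(y+|x|)^2/(4t)}$, so the claimed inequality cannot hold with a uniform constant. The hidden hypothesis the lemma needs, and under which your proof closes cleanly, is $0\le x\le y$ rather than merely $x\le y$ with the right-hand side ``nontrivial.'' Identifying that the correct condition is $x\ge 0$ (not $x\ge -y$) is the one thing you need to fix; everything else in the argument stands.
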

The following two equalities follows easily from the reflection principle for the Brownian motion \cite{KS}.
\begin{lemma}\label{BndBoth}
If $x>a>0,$ then
\begin{equation*}
\mathbf W_0\left(\sup_{u\in[0,t]}b(u)\geq a\text{ and }b(t)\leq x\right)=2\mathbf W_0(b(t)>a)-\mathbf W_0(b(t)>x),
\end{equation*}
whereas if $x<a$ with $a>0,$ then
\begin{equation*}
\mathbf W_0\left(\sup_{u\in[0,t]}b(u)\geq a\text{ and }b(t)\leq x\right)=\mathbf W_0(b(t)>2a-x).
\end{equation*}
\end{lemma}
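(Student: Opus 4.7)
The plan is to reduce both identities to a single form of the reflection principle, which I would take as the fundamental input. Specifically, for $a>0$ and $y\leq a$ one has
\begin{equation*}
\mathbf W_0\left(\sup_{u\in[0,t]}b(u)\geq a,\ b(t)\leq y\right)=\mathbf W_0(b(t)\geq 2a-y).
\end{equation*}
This is the standard reflection identity obtained by applying the strong Markov property at the first hitting time $\tau_a:=\inf\{u\geq 0:b(u)=a\}$ and reflecting the post-$\tau_a$ trajectory about the level $a$; I would simply cite it from \cite{KS} rather than rederive it.

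For the second identity of the lemma, where $x<a$ and $a>0$, the assumption $y=x\leq a$ holds, so the reflection identity above gives
\begin{equation*}
\mathbf W_0\left(\sup_{u\in[0,t]}b(u)\geq a,\ b(t)\leq x\right)=\mathbf W_0(b(t)\geq 2a-x)=\mathbf W_0(b(t)>2a-x),
\end{equation*}
the last equality coming from absolute continuity of the law of $b(t)$ under $\mathbf W_0$ (the density is given by \eqref{gd} with $a=0$). For the first identity, where $x>a>0$, I would partition the event according to the position of $b(t)$ relative to $a$, writing it as the disjoint union of $\{\sup_{u\in[0,t]}b(u)\geq a,\ b(t)\leq a\}$ and $\{\sup_{u\in[0,t]}b(u)\geq a,\ a<b(t)\leq x\}$. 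The first piece, by the reflection identity with $y=a$, has measure $\mathbf W_0(b(t)\geq a)=\mathbf W_0(b(t)>a)$. The second piece coincides with $\{a<b(t)\leq x\}$, because $b(t)>a$ automatically forces $\sup_{u\in[0,t]}b(u)\geq a$ by continuity of paths; hence it has measure $\mathbf W_0(b(t)>a)-\mathbf W_0(b(t)>x)$. Summing the two contributions produces $2\mathbf W_0(b(t)>a)-\mathbf W_0(b(t)>x)$, which is the claimed expression.

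There is no real obstacle here: the whole argument is a two-line partition combined with one invocation of reflection. The only point that requires a small amount of care is the bookkeeping of strict versus non-strict inequalities, which is handled throughout by the fact that $b(t)$ has a continuous distribution under $\mathbf W_0$, so that any finite set is null and the $\leq$ versus $<$ distinctions are immaterial.
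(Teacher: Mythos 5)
Your proof is correct and follows exactly the route the paper intends: the paper offers no written argument beyond attributing the lemma to the reflection principle (citing \cite{KS}), and your derivation—one application of the reflection identity for the case $x<a$, plus the partition at the level $a$ combined with reflection at $y=a$ for the case $x>a$—is precisely the routine verification being alluded to. The handling of strict versus non-strict inequalities via the continuity of the law of $b(t)$ is also fine.
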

Let
\begin{equation*}
\Phi(x)=\frac{1}{\sqrt{4\pi} }\int_{-\infty}^xe^{-u^2\slash 4}du.
\end{equation*}
\begin{lemma}\label{FiCases}
Let $t>0.$ For $a\geq 0$, $x,y\in\R$ with $x<y,$ let
\begin{align*}
R_1=&\{-a\leq x<y\leq a\},&R_2=&\{x<y<-a\},\\
R_3=&\{a<x<y\},&R_4=&\{0<x<a<y\}.
\end{align*}
Then
\begin{multline}\label{BndSup}
\mathbf W_0\left(\sup_{u\in[0,t]}|b(u)|\geq a\text{ and }b(t)\in[x,y]\right)\leq\\
\begin{cases}
2\Phi\left(\frac{2a-x}{\sqrt t}\right)-2\Phi\left(\frac{2a-y}{\sqrt t}\right)+2\Phi\left(\frac{2a+y}{\sqrt t}\right)-2\Phi\left(\frac{2a+x}{\sqrt t}\right),&\text{on }R_1,\\
2\Phi\left(\frac{2a-x}{\sqrt t}\right)-2\Phi\left(\frac{2a-y}{\sqrt t}\right)+\Phi\left(\frac{-x}{\sqrt t}\right)-\Phi\left(\frac{-y}{\sqrt t}\right),&\text{on }R_2,\\
\Phi\left(\frac{y}{\sqrt t}\right)-\Phi\left(\frac{x}{\sqrt t}\right)+2\Phi\left(\frac{2a+y}{\sqrt t}\right)-2\Phi\left(\frac{2a+x}{\sqrt t}\right),&\text{on }R_3,\\
2\left(1-\Phi\left(\frac{a}{\sqrt t}\right)\right)-\Phi\left(\frac{y}{\sqrt t}\right)-\Phi\left(\frac{2a-x}{\sqrt t}\right)+\Phi\left(\frac{2a+x}{\sqrt t}\right)-\Phi\left(\frac{2a+y}{\sqrt t}\right),&\text{on }R_4.
\end{cases}
\end{multline}
\end{lemma}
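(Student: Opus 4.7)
The plan is to reduce the two-sided barrier crossing event to one-sided events using the inclusion
\[
\left\{\sup_{u\in[0,t]}|b(u)|\geq a\right\}=\left\{\sup_{u\in[0,t]}b(u)\geq a\right\}\cup\left\{\inf_{u\in[0,t]}b(u)\leq -a\right\},
\]
intersecting with $\{b(t)\in[x,y]\}$ and applying a union bound so that it suffices to estimate the two one-sided contributions separately. By the reflection symmetry $b\mapsto -b$ of Wiener measure, the $\inf$-piece equals $\mathbf W_0(\sup b\geq a,\,b(t)\in[-y,-x])$; hence the problem reduces to producing upper bounds for a single quantity $\mathbf W_0(\sup_{u\in[0,t]}b(u)\geq a,\,b(t)\in[p,q])$ with the interval $[p,q]$ equal to either $[x,y]$ or $[-y,-x]$.

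To handle such expressions I write
\[
\mathbf W_0(\sup b\geq a,\,b(t)\in[p,q])=\mathbf W_0(\sup b\geq a,\,b(t)\leq q)-\mathbf W_0(\sup b\geq a,\,b(t)\leq p)
\]
and invoke Lemma~\ref{BndBoth}: the second formula ($\mathbf W_0(\sup b\geq a,\,b(t)\leq z)=\mathbf W_0(b(t)>2a-z)$) applies when $z<a$, while the first ($2\mathbf W_0(b(t)>a)-\mathbf W_0(b(t)>z)$) is used when $z>a$. Expressing the resulting probabilities via $\mathbf W_0(b(t)\leq z)=\Phi(z/\sqrt t)$ and using $\Phi(-z)=1-\Phi(z)$ to fold the reflected-interval contributions back into $\Phi$-values at $(2a\pm x)/\sqrt t$, $(2a\pm y)/\sqrt t$, $y/\sqrt t$, $x/\sqrt t$, etc., yields the combinations in the statement.

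The proof then proceeds by case analysis on where the intervals $[x,y]$ and $[-y,-x]$ sit with respect to $\pm a$. In $R_1$ every endpoint lies in $[-a,a]$, so both one-sided pieces use the $z<a$ branch of the reflection formula. In $R_2$ the interval $[x,y]$ lies entirely below $-a$ (so $z<a$), whereas $[-y,-x]$ lies entirely above $a$, where the probability collapses to $\mathbf W_0(b(t)\in[-y,-x])$ because $b(t)\geq a$ forces $\sup b\geq a$. Region $R_3$ is obtained by the analogous reasoning with the roles of the two one-sided contributions swapped. Region $R_4$ is the most delicate: $[x,y]$ straddles $a$, forcing use of both branches of Lemma~\ref{BndBoth} (the first branch at the upper endpoint $y>a$, the second branch at the lower endpoint $x<a$), while the reflected interval $[-y,-x]$ straddles $-a$ and is handled by the analogous split.

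The main obstacle is not any deep idea but the bookkeeping: tracking which branch of Lemma~\ref{BndBoth} applies at each endpoint, combining the two one-sided contributions, and exploiting $\Phi(-z)=1-\Phi(z)$ together with the union-bound inequality $\mathbf W_0(A\cup B)\leq\mathbf W_0(A)+\mathbf W_0(B)$ (with a small amount of deliberate over-bounding) to recover the compact closed forms in \eqref{BndSup}. Because the statement is an inequality rather than an equality, there is latitude to absorb cross-terms into convenient constant multiples of $\Phi$-values so that a single expression covers each region.
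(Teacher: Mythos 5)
Your proposal follows essentially the same route as the paper: bound the two-sided event by the union of the two one-sided barrier events, use the symmetry $b\mapsto -b$ to reduce everything to $\mathbf W_0(\sup_{u\le t} b(u)\geq a,\ b(t)\in[p,q])$ with $[p,q]$ equal to $[x,y]$ or $[-y,-x]$, and then apply Lemma~\ref{BndBoth} endpoint by endpoint in each region $R_i$, exactly as in the paper's (terse) proof. The only slip is cosmetic: in $R_4$ the reflected interval $[-y,-x]$ has both endpoints negative, hence below the barrier $a$, so only the second branch of Lemma~\ref{BndBoth} is needed there rather than an ``analogous split''; this does not affect the argument.
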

\begin{proof}
We use
\begin{multline*}
\mathbf W_0\left(\sup_{u\in[0,t]}|b(u)|\geq a\text{ and }b(t)\in[x,y]\right)\\
\leq \mathbf W_0\left(\sup_{u\in[0,t]}b(u)\geq a\text{ and }b(t)\in[x,y]\right)\\
+\mathbf W_0\left(\sup_{u\in[0,t]}-b(u)\geq a\text{ and }-b(t)\in[-y,-x]\right).
\end{multline*}
Then the bound \eqref{BndSup} on each set $R_i$ follows from Lemma~\ref{BndBoth} by an easy calculation. 
\end{proof}
\begin{corollary}\label{UnifEps}
Assume that $a>|n|+\delta$, $\delta>0$, $\varepsilon<1,$ and $0<\varepsilon\slash 2<\delta.$ Then
\begin{multline*}
\varepsilon^{-1}\mathbf W_0\left(\sup_{u\in[0,t]}|b(u)|\geq a\text{ and }b(t)\in[n-\varepsilon\slash 2,n+\varepsilon\slash 2]\right)\\
\leq\frac{1}{\sqrt{\pi t}}\left(e^{-(2a-n)^2\slash(4t)}+e^{-(2a+n)^2\slash(4t)}\right).
\end{multline*}
\end{corollary}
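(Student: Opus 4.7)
The plan is to reduce to the first case ($R_1$) of Lemma~\ref{FiCases} and then apply the mean value theorem to the resulting $\Phi$-differences. First, observe that the hypotheses $a>|n|+\delta$ and $0<\varepsilon/2<\delta$ give $n+\varepsilon/2<|n|+\delta<a$ and symmetrically $n-\varepsilon/2>-a$, so $[n-\varepsilon/2,\,n+\varepsilon/2]\subset(-a,a)$. Thus with $x=n-\varepsilon/2$ and $y=n+\varepsilon/2$ we are squarely in region $R_1$, and the probability in question is bounded by the $R_1$ expression of Lemma~\ref{FiCases}.

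That expression contains four $\Phi$-differences that group naturally into two pairs,
\[
2\Phi\!\left(\frac{2a-x}{\sqrt t}\right)-2\Phi\!\left(\frac{2a-y}{\sqrt t}\right)\quad\text{and}\quad 2\Phi\!\left(\frac{2a+y}{\sqrt t}\right)-2\Phi\!\left(\frac{2a+x}{\sqrt t}\right).
\]
Since $\Phi'(u)=(4\pi)^{-1/2}e^{-u^2/4}$, a substitution $u=w/\sqrt t$ writes each pair as $(\pi t)^{-1/2}$ times the integral of $e^{-w^2/(4t)}$ over an interval of length $\varepsilon$ centered at $2a-n$ and $2a+n$, respectively. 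Dividing by $\varepsilon$ converts each integral into an average of the Gaussian over that length-$\varepsilon$ window.

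Finally, under the hypotheses one has $2a\pm n>2\delta>\varepsilon$, so both windows stay well inside the tail of the Gaussian and the integrand is monotone there. Bounding each average by the value of $e^{-w^2/(4t)}$ at the appropriate point of the window yields the factors $e^{-(2a-n)^2/(4t)}$ and $e^{-(2a+n)^2/(4t)}$ (the $\varepsilon/2$ deviation from the center being absorbable by virtue of the uniform separation $\delta$ from the Gaussian peak combined with $\varepsilon<1$). Summing the two contributions produces the claimed inequality, with leading constant $(\pi t)^{-1/2}$.

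The only real obstacle is the quantitative comparison of the length-$\varepsilon$ average with the value at the center point $w=2a\pm n$; this is a routine estimate once one has the uniform gap $\delta$, but it is where the hypothesis $\varepsilon/2<\delta$ is actually used. Everything else reduces to invoking Lemma~\ref{FiCases} and computing $\Phi'$.
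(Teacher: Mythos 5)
Your proposal follows the same approach as the paper's own proof: reduce to the $R_1$ case of Lemma~\ref{FiCases}, then bound the $\Phi$-differences as $\varepsilon$-averages of the Gaussian over windows of length $\varepsilon$ centered at $2a-n$ and $2a+n$. You spell out the Gaussian integral step more than the paper does, but the final step has a genuine gap --- and in fact the paper's own proof has the same gap, obscured there by a sign slip.

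Concretely: the window for the first pair is $\bigl[(2a-y)/\sqrt t,\,(2a-x)/\sqrt t\bigr]$ with $x=n-\varepsilon/2$, $y=n+\varepsilon/2$, and $e^{-u^2/4}$ is \emph{decreasing} on $(0,\infty)$. Bounding the $\varepsilon$-average from above therefore forces you to evaluate the Gaussian at the \emph{left} endpoint $w=2a-n-\varepsilon/2$ (and, for the other pair, at $w=2a+n-\varepsilon/2$). This gives $e^{-(2a-n-\varepsilon/2)^2/(4t)}$ and $e^{-(2a+n-\varepsilon/2)^2/(4t)}$, each \emph{larger} than the corollary's stated factors $e^{-(2a-n)^2/(4t)}$, $e^{-(2a+n)^2/(4t)}$. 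The deficit ratio is $\exp\!\bigl(((2a\mp n)\varepsilon-\varepsilon^2/4)/(4t)\bigr)$, which is unbounded as $a$ grows or $t$ shrinks; neither the hypothesis $\varepsilon/2<\delta$ nor $\varepsilon<1$ lets you absorb it into a universal constant. So ``bounding each average by the value at the appropriate point of the window yields $e^{-(2a\mp n)^2/(4t)}$'' is incorrect as stated, and the parenthetical absorbability claim cannot be made rigorous without weakening the conclusion. (The paper's own proof writes $e^{-(2a-x)^2/(4t)}+e^{-(2a+y)^2/(4t)}$, which is the value at the \emph{wrong} endpoint of each window --- the MVT actually yields $y$ and $x$ interchanged, i.e.\ exactly the $\varepsilon/2$-shifted bound above.) The correct conclusion of this argument is
\[
\varepsilon^{-1}\mathbf W_0\bigl(\sup_{[0,t]}|b|\geq a,\ b(t)\in[n-\tfrac\varepsilon2,n+\tfrac\varepsilon2]\bigr)\leq \frac{1}{\sqrt{\pi t}}\Bigl(e^{-(2a-n-\varepsilon/2)^2/(4t)}+e^{-(2a+n-\varepsilon/2)^2/(4t)}\Bigr),
\]
which is still entirely sufficient for the paper's downstream uses (in Lemma~\ref{l.4.3} one needs only a bound uniform in $\varepsilon$ and summable in $k$, and in Lemma~\ref{enkest} the $\limsup_{\varepsilon\to 0}$ removes the shift), but the corollary as literally stated --- and your proof of it --- overclaims.
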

\begin{proof} Let $x=n-\frac{\varepsilon}{2}$ and $y=n+\frac{\varepsilon}{2}.$ Our hypotheses imply that $-a<x<y<a$.  In particular
\begin{equation*}
0<(2a-y)\slash\sqrt t<(2a-x)\slash \sqrt t\text{ and }
0<(2a+x)\slash\sqrt t<(2a+y)\slash\sqrt t.
\end{equation*}
Hence, from Lemma~\ref{FiCases},
\begin{multline*}
\varepsilon^{-1}\mathbf W_0\left(\sup_{u\in[0,t]}|b(u)|\geq a\text{ and }b(t)\in[n-\varepsilon\slash 2,n+\varepsilon\slash 2]\right)\\
\leq\frac{1}{\sqrt{\pi t}}\left(e^{-(2a-x)^2\slash(4t)}+e^{-(2a+y)^2\slash(4t)}\right)
\end{multline*}
proving the Corollary.
\end{proof}
\begin{corollary}\label{Largen}
Assume that $a\ge 0.$ Then
\begin{multline*}
\limsup_{\varepsilon\to 0}\frac{1}{\varepsilon}\mathbf W_0\left(\sup_{u\in[0,t]}|b(u)|\geq a\text{ and }b(t)\in[n-\varepsilon\slash 2,n+\varepsilon\slash 2]\right)\\\leq
\begin{cases}
\frac{2}{\sqrt{\pi t}}e^{-(2a-|n|)^2\slash(4t)}&|n|<a,\\
\frac{1}{\sqrt{4\pi t}}e^{-n^2\slash(4t)}&0\leq a\leq |n|.
\end{cases}
\end{multline*}
\end{corollary}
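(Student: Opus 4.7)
\medskip\noindent
\textbf{Proof sketch.} My plan is to separate the two regimes and use different tools for each. In the regime $|n|<a$, Corollary~\ref{UnifEps} already provides an $\varepsilon$-independent upper bound that, after a one-line algebraic manipulation, yields the claim. In the regime $0\le a\le |n|$, a direct application of Lemma~\ref{FiCases} via its regions $R_2$, $R_3$, or $R_4$ is \emph{not} sharp enough (it produces an additional exponentially-small but nonzero contribution that would spoil the constant $\tfrac{1}{\sqrt{4\pi t}}$), so instead I would discard the constraint on the supremum altogether and estimate the event by the marginal density of $b(t)$.

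For $|n|<a$, set $\delta:=(a-|n|)/2>0$, so that $a>|n|+\delta$. For every $\varepsilon$ with $0<\varepsilon/2<\delta$ and $\varepsilon<1$, Corollary~\ref{UnifEps} gives
\begin{equation*}
\varepsilon^{-1}\mathbf W_0\!\left(\sup_{u\in[0,t]}|b(u)|\ge a,\ b(t)\in[n-\tfrac{\varepsilon}{2},n+\tfrac{\varepsilon}{2}]\right)\le \frac{1}{\sqrt{\pi t}}\bigl(e^{-(2a-n)^2/(4t)}+e^{-(2a+n)^2/(4t)}\bigr),
\end{equation*}
and since the right-hand side does not depend on $\varepsilon$ it passes directly to the $\limsup$. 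Because $|n|<a$ forces both $2a-n$ and $2a+n$ to be at least $2a-|n|>0$, each exponential is bounded by $e^{-(2a-|n|)^2/(4t)}$, and the sum yields the claimed bound $\tfrac{2}{\sqrt{\pi t}}e^{-(2a-|n|)^2/(4t)}$.

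For $0\le a\le |n|$, I would use the trivial inclusion
\begin{equation*}
\{\sup_{u\in[0,t]}|b(u)|\ge a,\ b(t)\in[n-\tfrac{\varepsilon}{2},n+\tfrac{\varepsilon}{2}]\}\subseteq \{b(t)\in[n-\tfrac{\varepsilon}{2},n+\tfrac{\varepsilon}{2}]\},
\end{equation*}
so that the probability in question is at most $\int_{n-\varepsilon/2}^{n+\varepsilon/2}\tfrac{1}{\sqrt{4\pi t}}e^{-x^2/(4t)}\,dx$. Dividing by $\varepsilon$ and letting $\varepsilon\to 0$, continuity of the Gaussian density at $n$ yields $\tfrac{1}{\sqrt{4\pi t}}e^{-n^2/(4t)}$, exactly as required. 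I expect no real obstacle here: the only item requiring care is recognising that Lemma~\ref{FiCases} is too weak in the second regime and that the marginal-density bound is both simpler and already tight (for $|n|>a$, the inclusion above in fact becomes an equality as soon as $\varepsilon<2(|n|-a)$).
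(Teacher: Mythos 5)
Your proof is correct and follows essentially the same route as the paper: the first case ($|n|<a$) is read off from Corollary~\ref{UnifEps} together with the elementary observation that both $2a-n$ and $2a+n$ dominate $2a-|n|>0$, and the second case ($0\le a\le|n|$) drops the supremum constraint and passes to the Gaussian marginal density of $b(t)$, exactly as the paper does. Your side remarks — that Lemma~\ref{FiCases} is not sharp enough in the second regime, and that the inclusion becomes an equality for $|n|>a$ once $\varepsilon<2(|n|-a)$ — are accurate but do not change the substance of the argument.
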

\begin{proof}
The first statement is immediate from Corollary~\ref{UnifEps}. For the second statement note that
\begin{multline*}
\mathbf W_0\left(\sup_{u\in[0,t]}b(t)\geq a\text{ and }b(t)\in[n-\varepsilon\slash 2,n+\varepsilon\slash 2]\right)\\\leq\mathbf W_0\left(b(t)\in[n-\varepsilon\slash 2,n+\varepsilon\slash 2]\right)\\
=\frac{1}{\sqrt{4\pi t}}\int_{n-\varepsilon\slash 2}^{n+\varepsilon\slash 2}e^{-u^2\slash(4t)}du
\end{multline*}
from which the lemma follows.
\end{proof}
\subsection{Evolution equation in $\R^n$}\label{SSAbel}
Let
\begin{equation}\label{DefOfa}
L^t=\frac{1}{2}\sum_{i,j=1}^na_{ij}(t)\partial_{i}\partial_j+\sum_{j=1}^n b_j(t)\partial_j
\end{equation}
be a differential operator on $C^\infty(\R^n),$ where $\partial_i=\partial_{x_i}$ and $a(t)=[a_{ij}(t)]$ is a symmetric, positive definite matrix and the $a_{ij}$ and $b_j$ belong to $ C([0,\infty),\R)$. For
$s\leq t$, let $P_{t,s}$ be the fundamental solution for $L=\partial_s+L^s$ which is defined by formulas \eqref{opL}, \refp{IntPsg}, \refp{SgP} and \refp{DefOfUst}\, where $\mathcal L^{\sg(s)}$ is replaced by $L^s$.
Let
$$
\ali
A_{ij}(s,t)&=\int_s^ta_{ij}(u)du\equiv A_{i,j}\\
B_j(s,t)&=\int_s^tb_j(u)du\equiv B_j.\\
\eal
$$

\begin{proposition}\label{RnPsig}  Let $A=[A_{ij}]$ and $B=(B_1,B_2,\dots,B_n)^t$.  Then
\begin{equation}\label{ptsrn}
\begin{split}
P_{t,s}(x)
&=(2\pi)^{-\fn n2}(\det A)^{-\frac{1}{2}}e^{-\frac{1}{2}(A^{-1}(x-B))\cdot(x-B)}.
\end{split}
\end{equation}
\end{proposition}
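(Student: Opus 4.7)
The plan is to exploit the fact that $L^s$ has coefficients depending only on $s$, so the defining equation for $P_{t,s}$ is a constant-coefficient problem in $x$ for each fixed $s$ that reduces to a scalar ODE after applying the spatial Fourier transform. First I would apply the transform $\hat P_{t,s}(\xi)=\int_{\R^n}e^{-i\xi\cdot x}P_{t,s}(x)\,dx$, which converts $\partial_{x_j}$ into multiplication by $i\xi_j$, and obtain the scalar linear ODE in $s$ (with $\xi,t$ as parameters) whose right-hand side is a linear combination of $\tfrac12 a(s)\xi\cdot\xi$ and $b(s)\cdot\xi$ times $\hat P_{t,s}(\xi)$, together with the terminal condition $\hat P_{t,t}\equiv 1$ inherited from $\lim_{s\to t^-}P_{t,s}=\delta_0$ (equivalent to \refp{IntPsg} and \refp{DefOfUst}).

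Second, I would integrate this ODE directly from $s$ to $t$, yielding the closed-form expression
\begin{equation*}
\hat P_{t,s}(\xi)=\exp\left(-\tfrac12 A\xi\cdot\xi + iB\cdot\xi\right),
\end{equation*}
with $A=[A_{ij}(s,t)]$ and $B=(B_j(s,t))$ as in the statement. This is (up to the standard sign convention on the drift) precisely the Fourier transform of the shifted Gaussian on the right-hand side of \eqref{ptsrn}, so Fourier inversion --- legitimate because both sides are Schwartz functions once $A$ is positive definite --- produces the claimed formula.

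An equivalent and arguably more natural route, given the probabilistic spirit of the paper, is to realize $P_{t,s}$ as the density at time $t$ of the solution to the linear SDE $dX(u)=\sigma(u)\,dW(u)+b(u)\,du$ with $X(s)=0$, where $\sigma(u)\sigma(u)^*=a(u)$. Then $X(t)=\int_s^t\sigma(u)\,dW(u)+B$ is Gaussian with mean $B$ and covariance matrix $\int_s^t a(u)\,du=A$, whose density is exactly \eqref{ptsrn}. This route also makes manifest the positive-definiteness (hence invertibility) of $A$ as an integral of positive-definite matrices. I do not anticipate a substantive obstacle: once existence and Gaussian decay of $P_{t,s}$ are in hand (guaranteed by continuity of $a_{ij}, b_j$ and positive definiteness of $a(s)$, as in \cite{DHstudia,Tanabe}), the entire argument reduces to a routine Gaussian computation; the only small point of care is tracking the signs when integrating the ODE backwards in $s$ from $t$.
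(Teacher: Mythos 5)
Your Fourier-transform route is essentially the same as the paper's own proof, which writes $f(x,t)=f_o*P_{t,s}(x)$, records the forward equation $\partial_t f=L^t f$ with $f(\cdot,s)=f_o$, and then solves it in Fourier variables (referring to \cite[Proposition~2.10]{JEE} for the computation). The probabilistic SDE interpretation you add is a pleasant alternative viewpoint but is not what the paper uses; the only small thing to be careful about is the sign of $B$ in $\hat P_{t,s}(\xi)=\exp(-\tfrac12 A\xi\cdot\xi+iB\cdot\xi)$ versus the Gaussian centered at $B$, which you already flag and which is in any case immaterial in the paper's applications since the relevant operators have no first-order terms.
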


\begin{proof} For $f_o\in C_\infty(\R^n)\cap L^2(\R^n)$, we write
\begin{equation*}
f(x,t)=U(s,t)f_o(x)=f_o*P_{t,s}(x).
\end{equation*}
We note that
\begin{align*}
\partial_tf(x,t)&=L^tf(x,t),\quad t>s,\\
f(x,s)&=f_o(x).
\end{align*}
We solve the above equation using the Fourier transform. See \cite[Proposition~2.10]{JEE} for details.
\end{proof}

\section{Meta-abelian groups}\label{MAG}
Let the notation be as in \S \ref{introduction}. We consider a family of automorphisms $\{\Phi(a)\}_{a\in\R^k}$ of $\mathfrak n,$ that leaves $\mathfrak m$ and $\mathfrak v$ invariant.
We identify linear transformations on $\mathfrak n$ with $(m+n)\times(m+n)$ matrices, allowing us to write
\begin{equation*}
\Phi(a)=\begin{bmatrix}
S(a)&0\\0&T(a)
\end{bmatrix},
\end{equation*}
where
\begin{equation*}
\begin{split}
S(a)=&\diag\left[e^{\xi_1(a)},\ldots,e^{\xi_m(a)}\right],\\
T(a)=&\diag\left[e^{\vartheta_1(a)},\ldots,e^{\vartheta_n(a)}\right].
\end{split}
\end{equation*}
We denote the
diagonal entries
 of $S(a)$ and $T(a)$ by
\begin{equation*}
\begin{split}
s_i(a)=&e^{\xi_i(a)},\;i=1,\ldots,m,\\
t_j(a)=&e^{\vartheta_j(a)},\;j=1,\ldots,n.
\end{split}
\end{equation*}
Let $\sigma$ be a continuous function from $[0,+\infty)$ to $A=\R^k,$ and denote
\begin{equation}\label{ssigma}
\Phi^\sigma(t)=\Phi(\sigma(t)),\;S^\sigma(t)=S(\sigma(t)),\;T^\sigma(t)=T(\sigma(t)).
\end{equation}
For $Z\in\mathfrak n$ let
\begin{equation*}
Z(t)=\Phi^\sigma(t)Z.
\end{equation*}
For $v\in V$ let
\begin{equation}\label{LMv}
\mathcal L_M^{\sigma,v,t}=\sum_{j=1}^{m}\left(\Ad(v)Y_j(t)\right)^2.
\end{equation}
Then
\begin{equation*}
\mathcal L_N^{\sigma(t)}f(m,v)=\mathcal L_V^{\sigma,t}f(m,\cdot)\big|_v+\mathcal L_M^{\sigma,v,t}f(\cdot,v)\big|_m,\;\;t\in\R^+,
\end{equation*}
is a family of left invariant operators on $N$ depending on $t\in\R^+$. Our aim is to estimate the evolution kernel $P^\sigma_{t,s}$ for the time dependent operator
$\mathcal L_N^{\sigma(t)}.$
\subsection{Evolution on $M$}\label{eonM}
We choose coordinates $y_i$ for $M$ for which $Y_i$ corresponds to $\partial_i=\partial_{y_i},$ $1\leq i\leq m.$ Let $\eta\in C([0,\infty),V)$ and consider the evolution on $M$ generated by the time dependent
operator
\begin{equation*}
\mathcal L_M^{\sigma,\eta,t}=\sum_{j=1}^{m}\left(\Ad(\eta(t))Y_j(t)\right)^2,
\end{equation*}
where,
\begin{equation*}
Y_j(t)=\Phi^\sigma(t)Y_j.
\end{equation*}
Then
\begin{equation*}
\Ad(\eta(t))Y_j(t)=\Ad(\eta(t))\Phi^\sigma(t)Y_j=\sum_{k=1}^{m}\psi_{j,k}(t)Y_k,
\end{equation*}
and consequently,
\begin{equation*}
\begin{split}
\sum_{j=1}^{m}\left(\Ad(\eta(t))Y_j(t)\right)^2=&\sum_{k,l=1}^m\sum_{j=1}^{m}\psi_{k,j}(t)\psi_{l,j}(t)Y_kY_l\\
=&\sum_{k,l=1}^m(\psi(t)\psi(t)^*)_{kl}Y_kY_l,
\end{split}
\end{equation*}
where
$\psi(t)=[\psi_{i,j}(t)]$
is the matrix of the operator $\Ad(\eta(t))\Phi^\sigma(t)\rest{M}.$ Thus the matrix $[a_{ij}(t)]$ from \eqref{DefOfa} for the operator $\mathcal L_M^{\sigma,\eta,y}$ is
\begin{equation*}
[a_{ij}(t)]=2\psi(t)\psi(t)^*=2\Ad(\eta(t))\Psi^\sigma(t)\rest{M}\left(\Ad(\eta(t))\Psi^\sigma(t)\rest{M}M\right)^*.
\end{equation*}
It follows from Proposition~\ref{RnPsig} that the evolution kernel $P^{M,\sigma,\eta}_{t,s}$ for the operator $\mathcal L_M^{\sigma,\eta,t}$ is Gaussian, and in our notation, is given by
\begin{equation}\label{PMform1}
P^{M,\sigma,\eta}_{t,s}(m,m^\prime)=\mathcal D(A^{\sigma,\eta}_M(t,s))e^{-\mathcal B(A^{\sigma,\eta}_M(t,s))(m-m^\prime)},
\end{equation}
where $m,m^\prime\in M=\R^{\dim M},$ and $\mathcal D,\mathcal B$ are defined in \eqref{bede}.
We will need the following two lemmas:
\begin{lemma}\label{B(A)}
Let $A$ be a positive semi-definite symmetric matrix. Then
\begin{equation*}
\mathcal B(A)(x)\geq\frac{\|x\|^2}{2\|A\|},
\end{equation*}
where $\|A\|$ is the $\ell^2\to\ell^2$-operator norm.
\end{lemma}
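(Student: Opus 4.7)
The plan is to unwind the definition of $\mathcal B(A)(x) = \frac12 A^{-1}x\cdot x$ and reduce the inequality to a statement about eigenvalues via the spectral theorem. Since $A$ is symmetric positive semi-definite, and the statement involves $A^{-1}$, I will also assume $A$ is positive definite (strictly), so that the eigenvalues $\lambda_1,\ldots,\lambda_d$ are all strictly positive and $A^{-1}$ exists. The operator norm is then $\|A\| = \lambda_{\max}$, the largest eigenvalue.

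The key step is the following: diagonalize $A = U D U^T$ with $U$ orthogonal and $D = \diag[\lambda_1,\ldots,\lambda_d]$, so that $A^{-1} = U D^{-1} U^T$. Setting $y = U^T x$ and noting $\|y\| = \|x\|$ (since $U$ is orthogonal),
\begin{equation*}
A^{-1}x\cdot x = D^{-1}y\cdot y = \sum_{i=1}^d \frac{y_i^2}{\lambda_i} \geq \frac{1}{\lambda_{\max}}\sum_{i=1}^d y_i^2 = \frac{\|x\|^2}{\|A\|}.
\end{equation*}
Dividing by $2$ gives the claimed bound $\mathcal B(A)(x) \geq \|x\|^2/(2\|A\|)$.

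There is really no obstacle here; the only subtlety worth mentioning is to verify that $\|A\| = \lambda_{\max}$ for a symmetric positive definite matrix (which is standard, since such an $A$ attains its operator norm on a unit eigenvector of the largest eigenvalue). The argument is one short paragraph of linear algebra.
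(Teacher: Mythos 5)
Your proof is correct and is the standard spectral argument; the paper itself does not give a proof here but defers to a citation (\cite[Lemma~4.1]{JEE}), and the argument you give is exactly the one expected. Your observation that $A$ must in fact be positive definite (not merely semi-definite) for $A^{-1}$ to make sense is also well taken, and indeed the matrices to which the lemma is applied in the paper are invertible.
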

\begin{proof}
See e.g. \cite[Lemma 4.1]{JEE}
\end{proof}\begin{lemma}\label{blockdet}
Let $M$ and $D$ be square matrices and let
\begin{equation*}
A=\begin{bmatrix}
M&B\\C&D
\end{bmatrix}.
\end{equation*}
If $\det M\not=0$ then $\det A=\det M\det(D-CM^{-1}B).$
\end{lemma}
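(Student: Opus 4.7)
The plan is to prove this standard Schur complement identity by exhibiting an explicit block factorization of $A$ into triangular factors whose determinants are trivial to compute.

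First I would observe that, since $M$ is invertible, one can check directly by block multiplication that
\begin{equation*}
\begin{bmatrix} M & B \\ C & D \end{bmatrix}
= \begin{bmatrix} I & 0 \\ CM^{-1} & I \end{bmatrix}
\begin{bmatrix} M & B \\ 0 & D - CM^{-1}B \end{bmatrix}.
\end{equation*}
The verification is a one-line computation: the $(1,1)$ block gives $M$, the $(1,2)$ block gives $B$, the $(2,1)$ block gives $CM^{-1}M = C$, and the $(2,2)$ block gives $CM^{-1}B + (D - CM^{-1}B) = D$.

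Next I would take determinants of both sides. The first factor on the right is block lower triangular with identity blocks on the diagonal, so its determinant equals $1$. The second factor is block upper triangular, and a standard (and easy) induction using cofactor expansion along the first columns of the lower block shows that the determinant of a block upper triangular matrix is the product of the determinants of its diagonal blocks; hence its determinant is $\det M \cdot \det(D - CM^{-1}B)$. Multiplicativity of the determinant then yields $\det A = \det M \cdot \det(D - CM^{-1}B)$, as claimed.

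There is essentially no obstacle here; the only mild subtlety is justifying the block triangular determinant formula, but this follows immediately by viewing the matrix as a product of an upper triangular matrix with $I$ in the bottom-right block and another with $I$ in the top-left block, each of whose determinant reduces to that of a single diagonal block. Since this is a well-known linear algebra fact, one could alternatively just cite it.
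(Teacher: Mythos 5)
Your proof is correct. The paper itself does not supply an argument for this lemma; it simply cites Zhang's \emph{Matrix Theory} (\cite{matrix}), where exactly this Schur complement identity appears. Your block factorization
\begin{equation*}
\begin{bmatrix} M & B \\ C & D \end{bmatrix}
= \begin{bmatrix} I & 0 \\ CM^{-1} & I \end{bmatrix}
\begin{bmatrix} M & B \\ 0 & D - CM^{-1}B \end{bmatrix}
\end{equation*}
is the standard way to establish it, and your verification of the four blocks and the determinant bookkeeping (the first factor has determinant $1$, the second is block upper triangular with determinant $\det M\,\det(D-CM^{-1}B)$) are all correct. The only thing you appeal to without full proof is the block-triangular determinant formula, and you correctly indicate how to reduce it to two simpler factorizations or cite it as standard. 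So the net difference from the paper is that you unpack the argument the paper outsources to a reference; there is no gap.
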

\begin{proof}
See e.g. \cite{matrix}.
\end{proof}
Now we prove an upper bound on $\mathcal{D}(A^{\sigma,\eta}_M(s,t))$ that is independent of $\eta.$
For simplicity of notation we identify $M$, $V$, and, $N$ with $\mathfrak m$, $\mathfrak v$, and $\mathfrak n$ using the exponential map.

\begin{lemma}\label{C(A)}
There is a constant $C>0$ such that
\begin{equation*}
\mathcal{D}(A^{\sigma,\eta}_M(s,t))\leq C\left(\prod_{i=1}^{m}\int_s^ts_{i}^\sigma(u)^2du\right)^{-1\slash 2}=CA_{M,\Pi}^\sigma(s,t)^{-1\slash 2},
\end{equation*}
where $s^\sigma_{i}(t)$ are the entries of the diagonal matrix $S^\sigma(t)$ defined in \eqref{ssigma}.
\end{lemma}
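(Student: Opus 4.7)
The plan is to prove a uniform lower bound $\det A_M^{\sigma,\eta}(s,t)\geq 2^m A_{M,\Pi}^\sigma(s,t)$, from which the lemma follows immediately by substitution into $\mc D(A)=(2\pi)^{-m/2}(\det A)^{-1/2}$ with $C=(4\pi)^{-m/2}$. The essential idea is to exploit a lower-triangular structure in $\Ad(\eta(u))|_{\mf m}$ that is induced by the Jordan--H\"older ordering of $\mc B'$.

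First I would note that $\mc B'=\mc B_1\cup\mc B_2$ is ordered so that $\ad_X$ is strictly lower triangular on $\mf n$ for every $X\in\mf n$, and that $\mf m$ is an ideal of $\mf n$ because $M$ is normal in $N=M\rtimes V$. Combining these, $\ad_X|_{\mf m}$ is strictly lower triangular in $\mc B_1$ for every $X\in\mf v$, so $\Ad(\eta(u))|_{\mf m}=\exp(\ad_{\log\eta(u)}|_{\mf m})$ is unipotent lower triangular. Consequently $\psi(u):=\Ad(\eta(u))|_{\mf m}S^\sigma(u)$ is lower triangular with diagonal entries $s_1^\sigma(u),\dots,s_m^\sigma(u)$, and $A_M^{\sigma,\eta}(s,t)=2\int_s^t\psi(u)\psi(u)^*\,du$.

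Next I would realize $\int_s^t\psi\psi^*\,du$ as a Gram matrix. Let $r_i\in\mc H:=L^2([s,t],\R^m)$ denote the $i$-th row of $\psi$; then $\bigl(\int_s^t\psi\psi^*du\bigr)_{ij}=\langle r_i,r_j\rangle_{\mc H}$, so by the Gram--Schmidt/volume formula this determinant equals $\prod_{i=1}^m d_i^2$, where $d_i$ is the $\mc H$-distance from $r_i$ to $\mathrm{span}(r_1,\dots,r_{i-1})$. Because $\psi$ is lower triangular, every $r_k(u)$ with $k<i$ has zero $e_i$-component pointwise, whereas the $e_i$-component of $r_i(u)$ equals $\psi_{ii}(u)=s_i^\sigma(u)$; subtracting any element of this span from $r_i$ therefore leaves the $e_i$-coordinate untouched, giving $d_i^2\geq\int_s^t s_i^\sigma(u)^2\,du=A_{M,i}^\sigma(s,t)$. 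Taking the product yields $\det\int_s^t\psi\psi^*\,du\geq A_{M,\Pi}^\sigma(s,t)$, and the claimed bound follows after restoring the factor of~$2$.

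The only point that needs real care is establishing the unipotent lower-triangular form of $\Ad(\eta(u))|_{\mf m}$ in $\mc B_1$; once this is in hand, the determinant estimate is essentially a one-line application of Gram--Schmidt, and the conclusion follows by substitution into the definition of $\mc D$.
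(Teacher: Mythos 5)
Your proof is correct and reaches the same determinant lower bound $\det A^{\sigma,\eta}_M(s,t)\geq 2^m A^\sigma_{M,\Pi}(s,t)$ as the paper, but by a genuinely different route. The paper partitions $\ad_X\rest{\mf m}$ into blocks separating the last coordinate, writes $\Ad_X S$ and $\Ad_XS(\Ad_XS)^\tr$ in $2\times 2$ block form, applies the Schur-complement determinant formula (its Lemma\refn{blockdet}) to peel off a factor $\int_s^t s_m^\sigma(u)^2\,du$ together with a nonnegative remainder, and then inducts on $m$. You instead read $\int_s^t\psi\psi^*\,du$ directly as a Gram matrix of the rows $r_i\in L^2([s,t],\R^m)$, invoke the Gram--Schmidt volume formula $\det = \prod d_i^2$, and observe that lower triangularity forces the $e_i$-component of $r_i-w$ to remain $s_i^\sigma(u)$ for any $w$ in the span of $r_1,\dots,r_{i-1}$, so $d_i^2\geq A^\sigma_{M,i}(s,t)$. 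The two arguments are close cousins --- the paper's induction is essentially Gram--Schmidt carried out one row at a time, with the nonnegativity of the leftover Schur complement playing the role of dropping the orthogonal-projection correction --- but yours is cleaner and avoids the explicit block bookkeeping, whereas the paper's version stays closer to elementary matrix algebra and does not require interpreting the integral as an inner product on a function space. One small point worth making explicit, which you flag yourself: the unipotent lower-triangular form of $\Ad(\eta(u))\rest{\mf m}$ in the basis $\mc B_1$ requires both that $\mf m$ is an ideal (so $\ad_X$ preserves $\mf m$) and that $\mc B_1$ precedes $\mc B_2$ in the Jordan--H\"older ordering (so the restriction to $\mf m$ is exactly the upper-left $m\times m$ block of the strictly lower-triangular $\ad_X$). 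The paper takes the same fact for granted; your version of the argument is complete once you spell that out as you have.
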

\begin{proof}
We omit the $t$ and $\sigma$ dependence for the sake of simplicity. From the lower triangularity of the adjoint action of $\mathfrak n$, for $X\in \mathfrak n=N$,
\begin{equation*}
\ad_X=\begin{bmatrix}
X_o&0\\
v^\tr&0
\end{bmatrix},\;\;\Ad_X=e^{\ad_X}=\begin{bmatrix}
e^{X_o}&0\\
v(X)^\tr&1
\end{bmatrix},
\end{equation*}
where the $X_o$ is an $(m-1)\times(m-1)$-matrix and $v$ is an $(m-1)\times 1$-column vector.

Then
\begin{equation}\label{formula1}
\begin{split}
\Ad_XS=&e^{\ad_X}\begin{bmatrix}S_0&0\\
0&s_{m}\end{bmatrix}=\begin{bmatrix}e^{X_o}S_o&0\\
v(X)^t S_o&s_{m}\end{bmatrix}.
\end{split}
\end{equation}
Let
\begin{equation*}
F^t=v(X)^t S_o.
\end{equation*}
Thus
\begin{equation*}
\Ad_XS(\Ad_XS)^t=\begin{bmatrix}
e^{X_o}S_oS_o^t e^{X_o^t}&G\\
G^t&s_{m}^2+|F|^2\end{bmatrix},
\end{equation*}
where
\begin{equation*}
G=e^{X_o}S_oF=e^{X_o}S_oS_o^t v(X).
\end{equation*}
Hence,
\begin{equation*}
A_M^{\sigma,\eta}(s,t)=2\begin{bmatrix}
A_o&B\\
B^t&A+E
\end{bmatrix},
\end{equation*}
where
\begin{align*}
A_o=&\int_s^te^{X_o(u)}S_o(u)S_o(u)^t e^{X_o(u)^t}du,&
B=&\int_s^t G(u)du,\\
A=&\int_s^ts_{m}^2(u)du,&
E=&\int_s^t|F(u)|^2du.
\end{align*}
From Lemma~\ref{blockdet},
\begin{equation*}
\begin{split}
\det A^{\sigma,\eta}_M(s,t)=&(\det A_o)(A+E-B^t A_o^{-1}B)\\
=&(\det A_o)A+(\det A_o)(E-B^t A_o^{-1}B)\\
=&(\det A_o)A+\det\begin{bmatrix}
A_o&B\\B^t&E
\end{bmatrix}.
\end{split}
\end{equation*}
The determinant on the right is non-negative since it is the $s_{m}=0$ case of formula \eqref{formula1}. Hence,
\begin{equation*}
\det A^{\sigma,\eta}_M(s,t)\geq A(\det A_o).
\end{equation*}
Our result follows by induction.
\end{proof}
Now we estimate the operator norm of the matrix
\begin{equation}\label{DefOfAseta}
A^{\sigma,\eta}_M(0,t)=2\int_0^t[\Ad(\eta(u))S^\sigma(u)]\left[\Ad(\eta(u))S^\sigma(u)\right]^{t}du.
\end{equation}
\begin{lemma}\label{normaasigmaeta}
Let $\eta=\eta(u)=(\eta_1(u),\ldots,\eta_n(u))$ be a continuous function. Then there exists a constant $C>0$  such that
\begin{equation*}
\|A^{\sigma,\eta}_M(0,t)\|\leq C(1+\Lambda^\eta(0,t)^{2k_o})\sum_{j=1}^{m}\int_0^ts_{j}^\sigma(u)^2du,
\end{equation*}
where
\begin{equation*}
\Lambda^\eta(s,t)=\sup_{s\leq u\leq t}\|\eta(u)\|.
\end{equation*}
\end{lemma}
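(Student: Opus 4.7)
The plan is to exploit the elementary fact that for a positive semi-definite matrix $A$, one has $\|A\|\leq\Tr A$, which converts a spectral bound into a sum we can estimate entry-wise. Applied to $A^{\sigma,\eta}_M(0,t)$, which is manifestly positive semi-definite by its definition \eqref{DefOfAseta} as a Gram-type integral, this gives
\begin{equation*}
\|A^{\sigma,\eta}_M(0,t)\|\leq 2\int_0^t\Tr\bigl([\Ad(\eta(u))S^\sigma(u)][\Ad(\eta(u))S^\sigma(u)]^\tr\bigr)\,du
=2\int_0^t\|\Ad(\eta(u))S^\sigma(u)\|_F^2\,du,
\end{equation*}
where $\|\cdot\|_F$ is the Frobenius norm. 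Because $S^\sigma(u)=\diag[s_1^\sigma(u),\dots,s_m^\sigma(u)]$ is diagonal, the $(i,j)$-entry of $\Ad(\eta(u))S^\sigma(u)$ is $[\Ad(\eta(u))]_{ij}\,s_j^\sigma(u)$, and hence
\begin{equation*}
\|\Ad(\eta(u))S^\sigma(u)\|_F^2=\sum_{j=1}^m s_j^\sigma(u)^2\sum_{i=1}^m[\Ad(\eta(u))\rest{\mf m}]_{ij}^2.
\end{equation*}

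Next, I would control the inner sum uniformly in $j$. Because $\eta(u)\in V$ and $(\ad_X)^{k_o+1}\rest{\mf m}=0$ for all $X\in\mf v$ by \eqref{DefOfko}, one has the finite expansion
\begin{equation*}
\Ad(\eta(u))\rest{\mf m}=\exp(\ad_{\eta(u)})\rest{\mf m}=\sum_{j=0}^{k_o}\frac{1}{j!}(\ad_{\eta(u)})^j\rest{\mf m}.
\end{equation*}
Each $(\ad_{\eta(u)})^j$ is a polynomial of degree $j$ in the coordinates of $\eta(u)$, so its matrix entries are bounded in absolute value by $C_j\|\eta(u)\|^j\leq C_j\Lambda^\eta(0,t)^j$ on $[0,t]$. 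Summing from $j=0$ to $k_o$ and squaring yields a constant $C>0$ with
\begin{equation*}
\sum_{i=1}^m[\Ad(\eta(u))\rest{\mf m}]_{ij}^2\leq C\bigl(1+\Lambda^\eta(0,t)^{k_o}\bigr)^2\leq 2C\bigl(1+\Lambda^\eta(0,t)^{2k_o}\bigr),
\end{equation*}
uniformly in $j$ and $u\in[0,t]$.

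Combining these bounds and integrating gives
\begin{equation*}
\|A^{\sigma,\eta}_M(0,t)\|\leq C'\bigl(1+\Lambda^\eta(0,t)^{2k_o}\bigr)\sum_{j=1}^m\int_0^t s_j^\sigma(u)^2\,du,
\end{equation*}
as required. The argument is essentially routine linear algebra once one recognizes the two correct ingredients: the trace bound for positive semi-definite matrices (to decouple the spectral norm from the integral ordering), and the nilpotency degree $k_o$ (to convert the matrix exponential into a polynomial of the right order in $\|\eta\|$). The only point that demands some care is the verification that the polynomial degree of $\Ad(\eta(u))\rest{\mf m}$ is exactly $k_o$ rather than some larger quantity, which is where hypothesis \eqref{DefOfko} enters in an essential way.
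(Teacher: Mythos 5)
Your proof is correct and follows essentially the same route as the paper: the key ingredient in both is the observation that nilpotency \eqref{DefOfko} truncates $\exp(\ad_{\eta(u)})\rest{\mf m}$ to a polynomial of degree $k_o$ in $\eta(u)$, giving the growth bound $\|\Ad(\eta(u))\rest{\mf m}\|\lesssim(1+\Lambda^\eta(0,t))^{k_o}$. The only cosmetic difference is that you pass through the trace bound $\|A\|\le\Tr A$ and the Frobenius norm of the integrand, whereas the paper simply brings the operator norm inside the integral in \eqref{DefOfAseta} and uses submultiplicativity together with $\max_j s_j^\sigma(u)^2\le\sum_j s_j^\sigma(u)^2$; both are routine linear algebra and yield the identical estimate.
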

\begin{proof} (We recall that $\|\cdot\|$ denotes the $\ell^2$-norm.) We note first that for $X\in \mf{n},$
\blab{EstadX}
\ali
\Ad_X\rest{\mf m}&=\sum_{j=0}^{k_o}\fn{\lp\ad_X\rest{\mf m}\rp^j}{j!},\\
\|\Ad_X\rest{\mf m}\|&\le C(1+\| \ad_X\|)^{k_o}\\
&\le C'(1+\| X\|)^{k_o}.\\
\eal
\elab
Our result follows by bringing the norm inside the integral in\refp{DefOfAseta}.
\end{proof}

\subsection{Evolution on $V$}\label{eonV}
Recall that we identified $V$ with $\R^n.$ The matrix $T^\sigma(t)=\Phi^\sigma(t)\big|_V$ is of the form
\begin{equation*}
T^\sigma(t)=\diag\left[
e^{\vartheta_1(\sigma(t))},\ldots,e^{\vartheta_n(\sigma(t))}\right],
\end{equation*}
where, $\vartheta_1,\ldots,\vartheta_n\in(\R^n)^*.$
Now we consider the evolution process $\eta(t)$ on $V$ generated by
\begin{equation*}
\mathcal L_V^{\sigma,t}=\sum_{j=1}^{n}X_j(t)^2=\sum_{j=1}^{n}(T^\sigma(t)X_j)^2
\end{equation*}
(see the notation introduced in \eqref{operators} on p.~\pageref{operators}).
Thus, since $X_j=\partial_{v_j},$
\begin{equation*}
\mathcal L_V^{\sigma,t}=\sum_{j=1}^{n}e^{2\vartheta_j(\sigma(t))}\partial_{v_j}^2.
\end{equation*}
The matrix $a(t)=[a_{ij}(t)],$ defined in \eqref{DefOfa}, for $\mathcal L_V^{\sigma,t}$ is equal to
\begin{equation*}
a^\sigma_V(t)=2T^\sigma(t)T^\sigma(t)^*=2\diag\left[e^{2\vartheta_1(\sigma(t))},\ldots,e^{2\vartheta_{n}(\sigma(t))}\right].
\end{equation*}
Let $b(t)$ be the $1$-dimensional Brownian motion normalized so that
\begin{equation*}
\mathbf{W}_x(b(t)\in dy)=p_t(x,dy)=\frac{1}{(4\pi t)^{1\slash 2}}e^{-(x-y)^2\slash 4t}dy.
\end{equation*}
Then, by \eqref{ptsrn},
\begin{equation}\label{KVsigma}
P^{V,\sigma}_{s,t}(x,dz)=\prod_{1\leq j\leq n}p_{A^\sg_{V,i}(s,t)}(x_j,dz_j).
\end{equation}

Thus the process $\eta(t)$ generated by $\mathcal L_V^{\sigma,t}$ has coordinates $\eta_j(t)$ which are independent Brownian motions with time changed according to the clock governed by $\sigma.$
Let
\begin{equation}\label{At}
A^\sigma_V(0,t)=\int_0^ta^\sigma_V(u)du.
\end{equation}
Since $A^\sigma_V(0,t)$ is diagonal we see
\begin{equation}\label{detAt}
\ali
(\det A^\sigma_V(0,t))^{-1\slash 2}&=\left(\prod_{j=1}^{n}\int_0^te^{2\vartheta_j(\sigma(u))}du\right)^{-1\slash 2},\\
\|A^\sigma_V(0,t)\|&\leq \sum_{j=1}^n\int_0^te^{2\vartheta_j(\sigma(u))}du\\
&=A_{V,\Sigma}^\sigma(0,t).\\
\eal
\end{equation}

\subsection{Proof of Theorem\refn{DecPs2}}\label{PfOfdecPsg}
Theorem\refn{DecPs2} follows from formula\refp{PMform1} together with \cite[Corollary~3.5]{JEE} and formula \cite[(3.1)]{JEE} with $n=1$.

\section{Estimate for the evolution on $N$}\label{eonN}
In this section we estimate the evolution kernel on $N=M\rtimes V.$ Denote
\begin{equation*}
P^\sigma_{t,s}(m,v):=P^\sigma_{t,s}(0,0;m,v).
\end{equation*}
The main result of this section is the following estimate where $k_o$ is as in\refp{DefOfko}.
\begin{theorem}\label{PreEst} There are positive constants $C,D$  such that
\begin{multline*}
A_{M,\Pi}^{\sigma}(0,t)^{\frac{1}{2}}A_{V,\Pi}^{\sigma}(0,t)^{\frac{1}{2}}P^\sigma_{t,s}(m,v)\leq\\
C(\|m\|^{\frac{1}{2k_o}}+1)
\exp\left(-\frac{D\|v\|^ 2}{A_{V,\Sigma}^\sigma(0,t)}-\frac{D\|m\|^2}{(\|m\|^{\frac {1}{2k_o}}+\|v\| +2)^{2k_o}A_{M,\Sigma}^\sigma(0,t)}\right)\\
+CA_{V,\Sigma}^\sigma(0,t)^{1\slash 2}\exp\left(-D\frac{\|m\|^{\frac{1}{k_o}}+\|v\|^ 2}{A_{V,\Sigma}^\sigma(0,t)}\right).
\end{multline*}
\end{theorem}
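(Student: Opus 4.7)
The strategy is to apply the skew-product formula of Theorem \ref{DecPs2}, which in density form (taking $f$ to approximate a delta at $(m,v)$) reads
$$P^\sigma_{t,0}(0,0;m,v) = \int P^{M,\sigma,\eta}_{t,0}(0,m)\,\delta(\eta(t)-v)\, d\mathbf{W}^{V,\sigma}_0(\eta).$$
Combining the Gaussian expression \eqref{PMform1} for $P^{M,\sigma,\eta}_{t,0}$ with Lemmas \ref{C(A)}, \ref{B(A)} and \ref{normaasigmaeta} yields
$$P^{M,\sigma,\eta}_{t,0}(0,m) \leq C\, A_{M,\Pi}^\sigma(0,t)^{-1/2}\exp\!\left(-\frac{D\|m\|^2}{(1+\Lambda^\eta(0,t)^{2k_o})\,A_{M,\Sigma}^\sigma(0,t)}\right),$$
where $\Lambda^\eta(0,t) = \sup_{u \in [0,t]}\|\eta(u)\|$. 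The $m$-decay of this bound degrades as $\Lambda^\eta$ grows, which suggests partitioning the path integral at a threshold $R$.

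I would take $R := \|m\|^{1/(2k_o)} + \|v\| + 2$ and split into $\{\Lambda^\eta(0,t) \leq R\}$ and $\{\Lambda^\eta(0,t) > R\}$. On the small-sup event the exponential bound becomes uniform in $\eta$ and factors out of the integral, and what remains is bounded by the unconditional density of $\eta(t)$ at $v$. Since \eqref{KVsigma} realizes the coordinates $\eta_j(t)$ as independent one-dimensional Brownian motions time-changed by $A_{V,j}^\sigma(0,t)$, this density is at most $CA_{V,\Pi}^\sigma(0,t)^{-1/2}\exp(-D\|v\|^2/A_{V,\Sigma}^\sigma(0,t))$, using $A_{V,j}^\sigma(0,t) \leq A_{V,\Sigma}^\sigma(0,t)$. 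Multiplying by $A_{M,\Pi}^\sigma(0,t)^{1/2}A_{V,\Pi}^\sigma(0,t)^{1/2}$ then produces the first summand of the claimed bound.

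On $\{\Lambda^\eta(0,t) > R\}$ I would use only the trivial bound $P^{M,\sigma,\eta}_{t,0}(0,m) \leq CA_{M,\Pi}^\sigma(0,t)^{-1/2}$ (Lemma \ref{C(A)}) and estimate the joint density of $\{\Lambda^\eta > R,\,\eta(t) \in dv\}$. The inclusion $\{\Lambda^\eta > R\} \subset \bigcup_{j=1}^n\{\sup_u |\eta_j(u)| > R/\sqrt{n}\}$ reduces the matter to a union of one-dimensional problems, and Corollary \ref{Largen} (applied with time $A_{V,j}^\sigma(0,t)$) together with the elementary bound $(2R/\sqrt{n} - |v_j|)^2 \geq \tfrac12(R^2/n + v_j^2)$ for $|v_j| \leq R/\sqrt{n}$ (and a direct estimate in the opposite regime) gives
$$\mathbf{W}^{V,\sigma}_0\bigl(\Lambda^\eta(0,t) > R,\,\eta(t)\in dv\bigr)/dv \leq CA_{V,\Pi}^\sigma(0,t)^{-1/2}\exp\!\left(-\frac{D(R^2+\|v\|^2)}{A_{V,\Sigma}^\sigma(0,t)}\right).$$
Since $R^2 \geq \|m\|^{1/k_o}$, multiplying by $A_{M,\Pi}^\sigma(0,t)^{1/2}A_{V,\Pi}^\sigma(0,t)^{1/2}$ yields the exponential factor of the second summand.

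The main obstacle is sharpening the prefactors to match the statement, namely $(\|m\|^{1/(2k_o)}+1)$ for the first summand and $A_{V,\Sigma}^\sigma(0,t)^{1/2}$ for the second. I expect these to emerge from a more refined layer-cake decomposition $\{\Lambda^\eta(0,t) \in [2^j R,\,2^{j+1}R]\}$ summed geometrically, whose polynomial overhead reflects both the number of dyadic scales needed to cover the range of $\Lambda^\eta$ and the diffusive volume at the natural scale $\sqrt{A_{V,\Sigma}^\sigma(0,t)}$ of the $V$-process. Once that bookkeeping is in hand, the rest of the argument is a mechanical combination of the Gaussian form \eqref{PMform1}, the norm estimates of Lemmas \ref{C(A)} and \ref{normaasigmaeta}, and the one-dimensional sup-density bounds of Corollary \ref{Largen}.
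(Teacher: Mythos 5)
Your overall scheme is sound and lands in the same neighborhood as the paper's argument: both pass through the skew-product formula of Theorem~\ref{DecPs2}, the Gaussian bound~\eqref{PMform1} combined with Lemmas~\ref{B(A)}, \ref{C(A)}, \ref{normaasigmaeta}, and a decomposition of the $V$-path integral according to the size of $\Lambda^\eta(0,t)$, with Corollary~\ref{Largen} controlling the joint law of $(\sup|\eta_j|,\eta_j(t))$. The decompositions differ, though: the paper slices the path space into unit-width layers $\mathcal A_k=\{k-1\le\Lambda^\eta<k\}$, replaces the test function by $\psi_\varepsilon=\varepsilon^{-n}\mathbf 1_{B_\varepsilon(v)}$, passes to the limit (justified by the uniform convergence in Lemma~\ref{l.4.3}), and then sums over $k\ge n_o$, splitting the sum at $k=n_o+\|m\|^{1/(2k_o)}$; the $(\|m\|^{1/(2k_o)}+1)$ prefactor records the number of layers in the near range, and the $A_{V,\Sigma}^{1/2}$ prefactor comes from an integral comparison for the tail of the sum. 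You instead use a single threshold at $R=\|m\|^{1/(2k_o)}+\|v\|+2$ and bound the two pieces directly, which is a legitimate simplification.

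The ``obstacle'' you flag, however, is not a real gap, and the dyadic layer-cake you propose is unnecessary. On the near event your bound has prefactor $C$, which is trivially dominated by $C(\|m\|^{1/(2k_o)}+1)$. On the far event your bound is $C\exp\bigl(-D(R^2+\|v\|^2)/A_{V,\Sigma}^\sigma(0,t)\bigr)$, and since $R\ge 2$ the term $R^2$ carries both a $\|m\|^{1/k_o}$ contribution and a fixed additive constant; splitting off the constant and using the elementary inequality $e^{-c/x}\le C'x^{1/2}$ valid for all $x>0$ produces the $A_{V,\Sigma}^\sigma(0,t)^{1/2}$ prefactor of the second summand at the cost of shrinking $D$. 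So the two-threshold split already closes the proof. Two small points worth tightening: the $\delta$-function manipulation should be made rigorous by working with $\psi_\varepsilon$ and passing to the limit (as the paper does, this is where uniform convergence of the tail matters); and in the ``opposite regime'' $|v_j|>R/\sqrt n$ of Corollary~\ref{Largen}, you should record explicitly that $v_j^2>R^2/n$ so the Gaussian factor still contributes the $R^2$ decay.
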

\begin{proof}
We allow the constants $C$ and $D$ to change from line to line. By Lemma~\ref{B(A)} and Lemma~\ref{C(A)},
\begin{equation}\label{estkse}
\begin{split}
P^{M,\sigma,\eta}_{t,s}(m,m^\prime)=&\mathcal D(A^{\sigma,\eta}_M(s,t))e^{-\mathcal B(A^{\sigma,\eta}_M(s,t))(m-m^\prime)}\\
\leq&CA_{M,\Pi}^\sigma(s,t)^{-1\slash 2}e^{-\frac{\|m-m^\prime\|^2}{2\|A^{\sigma,\eta}_M(s,t)\|}}.
\end{split}
\end{equation}

From Theorem\refn{DecPs2}, for $m,m^\prime\in M$ and $v,v^\prime\in V$,
\begin{equation*}\label{Ksigma}
\begin{split}
\int_V P^\sigma_{t,0}&(m,v;m^\prime,v^\prime)\psi(v^\prime)dv^\prime=\int P^{M,\sigma,\eta}_{t,0}(m,m^\prime)\psi(\eta(t))\, d\mathbf{W}^{V,\sigma}_{v}(\eta)\\
\leq&CA_{M,\Pi}^\sigma(0,t)^{-1\slash 2}\int\psi(\eta(t))e^{-\frac{\|m-m^\prime\|^2}{2\|A^{\sigma,\eta}_M(0,t)\|}}d\mathbf{W}^{V,\sigma}_{v}(\eta).
\end{split}
\end{equation*}
Then, by Lemma~\ref{normaasigmaeta},
\begin{multline}\label{integral}
A_{M,\Pi}^\sigma(0,t)^{1\slash 2}\int_V P^\sigma_{t,0}(m,v)\psi(v)dv\\
\leq C\int\exp\left(-\frac{D\|m\|^2}{(1+\Lambda^\eta(0,t)^{2k_o})A_{M,\Sigma}^\sigma(0,t)}\right)\psi(\eta(t))\, d\mathbf{W}^{V,\sigma}_0(\eta).
\end{multline}
For  $v\in\R^n$ given and $\varepsilon>0$, let
\begin{equation*}
\psi_\varepsilon(\cdot)=\varepsilon^{-n}\one_{B_\varepsilon(v)}(\cdot),
\end{equation*}
where
\begin{equation*}
B_\varepsilon(v)=\prod_{j=1}^nB_\varepsilon^1(v_j)\text{ and }B_\varepsilon^1(v_j)=[v_j-\varepsilon\slash 2,v_j+\varepsilon\slash 2].
\end{equation*}
We will estimate\refp{integral} with $\psi_\varepsilon$ in place of $\psi$ as $\varepsilon$ tends to zero.

Let $\e_v^\eta$ denote expectation with respect to $d\mathbf{W}_v^{V,\sigma}(\eta)$. For $k=1,2,\dotsc,$ define the sets of paths in $V,$
\begin{equation*}
\mathcal{A}_k(t)=\{\eta:k-1\leq\Lambda^\eta(0,t)=\sup_{0\leq u\leq t}\|\eta(u)\|_{\infty}<k\},
\end{equation*}
where by $\|\cdot\|_\infty$ we denote the maximum norm $\|x\|_\infty=\max_{1\leq i\leq n}|x_i|.$
The integral on the right in \eqref{integral} can be written as an infinite sum and estimated as follows
\begin{equation}\label{ExpIneq}
\begin{split}
\sum_{k=1}^\infty\e_0^\eta\exp\left(-\frac{D\|m\|^2}{(1+\Lambda^\eta(0,t)^{2k_o})
A_{M,\Sigma}^\sigma(0,t)}\right)\psi_\varepsilon(\eta(t))\one_{\mathcal{A}_k(t)}(\eta)\\
\leq\sum_{k=1}^\infty\exp\left(-\frac{D\|m\|^2}{k^{2k_o}
A_{M,\Sigma}^\sigma(0,t)}\right)\e_0^\eta\psi_\varepsilon(\eta(t))\one_{\mathcal{A}_k(t)}(\eta).
\end{split}
\end{equation}
To simplify notation we introduce
\begin{equation*}
\begin{split}
c_k&=\exp\left(-\frac{D\|m\|^2}{k^{2k_o}A_{M,\Sigma}^\sigma(0,t)}\right),\\
\mathcal{E}_k(\varepsilon)&=\e_0^\eta\psi_\varepsilon(\eta(t))\one_{\mathcal{A}_k(t)}(\eta)
=\varepsilon^{-n}\mathbf{W}^{V,\sigma}_0\left(\eta\in\mathcal{A}_k(t)\text{ and }\eta(t)\in B_\varepsilon(v)\right).
\end{split}
\end{equation*}
Let $v\not=0$ and choose $\varepsilon\slash 2<\|v\|_\infty$. If $\eta\in\mathcal{A}_k(t)$ and $\eta(t)\in B_\varepsilon(v)$ then $\|\eta(t)\|_\infty\geq\|v\|_\infty-\varepsilon\slash 2$. Hence,
\begin{equation}\label{zero}
\mathcal{E}_k=0\text{ for }k<\|v\|_\infty-\varepsilon\slash 2.
\end{equation}
Let, for $k=1,2,\dotsc,$
\begin{equation*}
\Lambda^{\eta_j}(0,t)=\sup_{0\leq u\leq t}|\eta_j(u)|\text{ and }\mathcal{A}_k^j(t)=\{\eta:k-1\leq\Lambda^{\eta_j}(0,t)<k\}.
\end{equation*}
Since the coordinates $\eta_j(t)$ of $\eta(t)$ are independent, we can estimate
\begin{equation}\label{ek}
\begin{split}
&\mathcal{E}_k(\varepsilon)=\varepsilon^{-n}\mathbf{W}^{V,\sigma}_0\left(\eta\in\mathcal{A}_k(t)\wedge\eta(t)\in B_\varepsilon(v)\right)\\
\leq&\varepsilon^{-n}\sum_{j=1}^n\mathbf{W}^{V,\sigma}_0\left(\eta\in\mathcal{A}_k^j(t)\wedge\eta(t)\in B_\varepsilon(v)\right)\\
=&\varepsilon^{-n}\sum_{j=1}^n\mathbf{W}^{V,\sigma}_0\left(\eta\in\mathcal{A}_k^j(t)\wedge\eta_j(t)\in B^1_\varepsilon(v_j)\right)\mathbf{W}^{V,\sigma}_0\left(\forall i\not=j, \eta_i(t)\in B^1_\varepsilon(v_i)\right)\\
=&\sum_{j=1}^n\varepsilon^{-1}\mathbf{W}_0\left(\eta\in\mathcal{A}_k^j(A_{V,j}^\sigma(0,t))\wedge\eta_j(A_{1,j}^\sigma(0,t))\in B^1_\varepsilon(v_j)\right)\times\\
&\times\prod_{i\not=j}\left(\varepsilon^{-1}\mathbf{W}_0\left(\eta_i(A_{V,i}^\sigma(0,t))\in B^1_\varepsilon(v_i)\right)\right).
\end{split}
\end{equation}
\begin{lemma}\label{cor4}
Assume that $a>\|v\|_\infty+\delta,$ $\delta>0,$ and $0<\varepsilon\slash 2<\delta.$ Then
\begin{multline*}
\varepsilon^{-n}\mathbf{W}_0^{V,\sigma}(\sup_{u\in[0,t]}\|\eta(u)\|_\infty\geq a\text{ and }\eta(t)\in B_\varepsilon(v))\\
\leq A_{V,\Pi}^\sigma(0,t)^{-1\slash 2}\sum_{j=1}^n\left(e^{-(2a-v_j)^2\slash 4A_{V,j}^\sigma(0,t)}+e^{-(2a+v_j)^2\slash 4A_{V,j}^\sigma(0,t)}\right).
\end{multline*}
\end{lemma}
\begin{proof}
Reasoning as in \eqref{ek} we see that the left side of the above inequality is bounded by
\begin{multline*}
C\sum_{j=1}^n\left(\prod_{i\not=j}A_{V,i}^\sigma(0,t)\right)^{-1\slash 2}\\\times\varepsilon^{-1}\mathbf{W}_0\left(\sup_{u\in[0,A_{V,j}^\sigma(0,t)]}|\eta_j(u)|\geq a\text{ and }\eta_j(A_{V,j}^\sigma(0,t))\in B^1_\varepsilon(v_j)\right).
\end{multline*}
By our assumption it follows that for every $j,$ $a>|v_j|+\delta.$ Hence, the result follows by Corollary~\ref{UnifEps}.
\end{proof}
\begin{lemma}\label{l.4.3}
We have
\begin{equation*}
A_{M,\Pi}^{\sigma}(0,t)^{\frac{1}{2}}P^\sigma_{t,0}(m,v)\le CI,
\end{equation*}
where
\begin{equation*}
I=\limsup_{\varepsilon\to 0^+}\sum_{k\geq\|v\|_\infty}c_k\mathcal{E}_k(\varepsilon).
\end{equation*}
Furthermore, the sum converges uniformly in $\varepsilon$.
\end{lemma}
\begin{proof}
The inequality follows by letting $\varepsilon$ tend to 0 in \eqref{ExpIneq}. The uniform convergence follows from Lemma~\ref{cor4}.
\end{proof}
Let $n_o$ be the smallest natural number such that $n_o\geq\|v\|_\infty$.
\begin{lemma}\label{enkest}
We have the following estimates
\begin{equation*}
\limsup_{\varepsilon\to 0^+}\mathcal{E}_{n_o}(\varepsilon)\leq CA_{V,\Pi}^\sigma(0,t)^{-1\slash 2}e^{-\|v\|^2_\infty\slash 4A_{V,\Sigma}^\sigma(0,t)},
\end{equation*}
while for $k\geq n_o+1,$
\begin{equation*}
\limsup_{\varepsilon\to 0^+}\mathcal{E}_k(\varepsilon)\leq CA_{V,\Pi}^\sigma(0,t)^{-1\slash 2}\exp\left(-\frac{(2(k-1)-\|v\|_\infty)^2}{4A_{V,\Sigma}^\sigma(0,t)}\right).
\end{equation*}
\end{lemma}
\begin{proof}
Consider $\mathcal{E}_{n_o}.$ Let $j\in\{1,\ldots,n\}$ be fixed. Suppose first that $|v_j|<n_o-1.$ Then, using Corollary~\refp{Largen}, the $j$-th term in \eqref{ek} (with $k=n_o$) is dominated by a multiple of
\begin{equation*}
A_{V,\Pi}^\sigma(0,t)^{-1\slash 2}e^{-\frac{(2(n_o-1)-|v_j|)^2}{4A_{V,j}^\sigma(0,t)}}\prod_{i\not=j}e^{-\frac{|v_i|^2}{4A_{V,i}^\sigma(0,t)}}.
\end{equation*}
Notice that $|v_j|$ cannot be equal to $\|v\|_\infty.$ Thus we are done in this case. Now suppose that
$|v_j|\geq n_o-1.$ Then, using Corollary~\ref{Largen} again, we dominate the $j$-th term in \eqref{ek} by a multiple of
\begin{equation*}
A_{V,\Pi}^\sigma(0,t)^{-1\slash 2}e^{-\frac{|v_j|^2}{4A_{V,j}^\sigma(0,t)}}\prod_{i\not=j}e^{-\frac{|v_i|^2}{4A_{V,i}^\sigma(0,t)}}.
\end{equation*}
The result for $\mathcal{E}_{n_o}$ follows.

Now we consider $\mathcal{E}_k.$ Since $k\geq n_o+1$ it follows that $k-1\geq |v_j|$ for every $j.$ Therefore, by Corollary~\ref{Largen} the $j$-th term in \eqref{ek} is estimated by
\begin{equation*}
CA_{V,\Pi}^\sigma(0,t)^{-1\slash 2}e^{-\frac{(2(k-1)-|v_j|)^2}{4A_{V,j}^\sigma(0,t)}}\prod_{i\not=j}e^{-\frac{|v_i|^2}{4A_{V,i}^\sigma(0,t)}}.
\end{equation*}
\end{proof}

Next we estimate $I=\limsup_{\varepsilon\to 0^+}\sum_{k\geq\|v\|_\infty}c_k\mathcal{E}_k(\varepsilon).$
From Lemma~\ref{enkest}
\begin{equation}\label{first}
\begin{split}
A_{V,\Pi}^\sigma(0,t)^{1\slash 2}I
=&A_{V,\Pi}^\sigma(0,t)^{1\slash 2}\limsup_{\varepsilon\to 0^+}\left(c_{n_o}\mathcal{E}_{n_o}(\varepsilon)+\sum_{k\geq n_o+1}c_k\mathcal{E}_k(\varepsilon)\right)\\
\leq&C\exp\left(-\frac{\|v\|_\infty^2}{2A_{V,\Sigma}^\sigma(0,t)}-\frac{D\|m\|^2}{n_o^{2k_o }A_{M,\Sigma}^\sigma(0,t)
}\right)\\
&+\sum_{k=n_o+1}^\infty
\exp\left(-\frac{D\|m\|^2}{k^{2k_o }A_{M,\Sigma}^\sigma(0,t)}-\frac{(2(k-1)-\|v\|_\infty)^2}{2A_{V,\Sigma}^\sigma(0,t)}\right).
\end{split}
\end{equation}

For $a,b$  non-negative $
a+b\ge \sqrt{a^2+b^2}.
$
Also, for $k\ge n_o+1$,
\begin{equation*}
\begin{split}
(k-1)+(k-1)-\|v\|_\infty&\geq n_o+(k-1-\|v\|_\infty),\\
k-1-\|v\|_\infty&\geq n_o-\|v\|_\infty\geq 0.
\end{split}
\end{equation*}
Hence the summation in the last line of \eqref{first} is bounded by
\begin{multline}\label{second}
\sum_{k=n_o+1}^\infty\exp\left(-\frac{D\|m\|^2}{k^{2k_o }A_{M,\Sigma}^\sigma(0,t)}-
\frac{(n_o+(k-1)-\|v\|_\infty)^2}{4A_{V,\Sigma}^\sigma(0,t)}\right)\\
\leq e^{-\frac{n_o^2}{4A_{V,\Sigma}^\sigma(0,t)}}
\sum_{k=n_o+1}^\infty
\exp\left(-\frac{D\|m\|^2}{k^{2k_o }A_{M,\Sigma}^\sigma(0,t)}-
\frac{(k-1-\|v\|_\infty)^2}{4A_{V,\Sigma}^\sigma(0,t)}\right).
\end{multline}
We split the sum in \eqref{second} into two parts: $n_o+1\leq k\leq n_o+\|m\|^{1\slash (2k_o) }$ and $k>n_o+\|m\|^{1\slash (2k_o) },$ and estimate the corresponding parts by the following two terms.
\begin{equation*}
\|m\|^{\frac{1}{2k_o }}e^{-\frac{n_o^2}{4A_{V,\Sigma}^\sigma(0,t)}}\exp
\left(-\frac{D\|m\|^2}{(\|m\|^\frac{1}{2k_o }+\|v\|_\infty+2)^{2k_o }A_{M,\Sigma}^\sigma(0,t)}\right)
\end{equation*}
and
\begin{equation*}
e^{-\frac{n_o^2}{4A_{V,\Sigma}^\sigma(0,t)}}\sum_{k\geq n_o+\|m\|^\frac{1}{2k_o }+1}\exp\left(-\frac{D\|m\|^2}{k^2A_{M,\Sigma}^\sigma(0,t)}-
\frac{(k-1-n_o)^2}{4A_{V,\Sigma}^\sigma(0,t)}\right).
\end{equation*}
The above expression is bounded by
\begin{equation*}
e^{-\frac{n_o^2}{4A_{V,\Sigma}^\sigma(0,t)}}\int_{\|m\|^\frac{1}{2k_o }}^\infty e^{-\frac{r^2}{4A_{V,\Sigma}^\sigma(0,t)}}dr
\leq 2A_{V,\Sigma}^\sigma(0,t)^{1\slash 2}e^{-\frac{\|v\|_\infty^2}{4A_{V,\Sigma}^\sigma(0,t)}-\frac{\|m\|^{\frac{1}{k_o }}}{4A_{V,\Sigma}^\sigma(0,t)}}.
\end{equation*}
Hence, by Lemma~\ref{l.4.3} and Lemma~\ref{enkest}, Theorem\refp{PreEst} follows.
\end{proof}
\subsection{Proof of Theorem~\ref{ubpsigma}}
To simplify our notation we set
\begin{equation}\label{Ajsigma}
\begin{split}
A_0&=A_{N,\Pi}^\sigma(0,t)^{1\slash 2}=A_{M,\Pi}^\sigma(0,t)^{1\slash 2}A_{V,\Pi}^\sigma(0,t)^{1\slash 2},\\
A_1&=A_{V,\Sigma}^\sigma(0,t),\\
A_2&=A_{M,\Sigma}^{\sigma}(0,t),\\
A_3&=A_{N,\Sigma}^{\sigma}(0,t).
\end{split}
\end{equation}
For $k\in\N$ and the $\ell^2$-norm $\|\cdot\|,$ we let
\begin{equation*}
\phi_k(m)=\left(\frac{\|m\|^{1\slash k}}{\|m\|^{1\slash k}+1}\right)^k,\;\;m\in M.
\end{equation*}

It follows from Theorem~\ref{PreEst} that
there are positive constants $C$ and $D$ such that in the region $\|v\|\leq\|m\|^{\frac{1}{2k_o}}$,
\begin{multline}\label{vmniejsze}
A_0P^\sigma_{t,0}(m,v)\\
\leq C(\|m\|^{1\slash (2k_o)}+1)\exp\left(-D\frac{\|v\|^2}{A_1}-D\frac{\|m\|}{A_2}\phi_{2k_o}(m)\right)\\+CA_1^{1\slash 2}\exp\left(-D\frac{\|m\|^{1\slash k_o}+\|v\|^2}{A_1}\right).
\end{multline}
In fact, if $\|v\|\leq\|m\|^{\frac{1}{2k_o}}$ then
\begin{equation*}
\frac{\|m\|^2}{(\|m\|^{1\slash(2k_o)}+\|v\|+2)^{2k_o}}\geq\frac{\|m\|^2}{2^{2k_o}(\|m\|^{1\slash(2k_o)}+1)^{2k_o}}
=\frac{1}{2^{2k_o}}\phi_{2k_o}(m)\|m\|,
\end{equation*}
and we get \eqref{vmniejsze}.

Now we consider $\|v\|\geq \|m\|^{\frac{1}{2k_o}}.$ Again, it follows from Theorem~\ref{PreEst} that there are positive constants $C,D$ such that
\begin{equation}\label{vwieksze}
A_0P^\sigma_{t,0}(m,v)\leq C(\|m\|^{1\slash (2k_o)}+1+A_1^{1\slash 2})\exp\left(-D\frac{\|m\|^{1\slash k_o}+\|v\|^2}{A_1}\right).
\end{equation}
In fact, it is easy to see that if $\|v\|\geq\|m\|^{1\slash(2k_o)}$ then there is a constant $D>0$ such that
\begin{equation*}
\frac{\|v\|^2}{A_1}+\frac{\|m\|^2}{(\|m\|^{1\slash(2k_o)}+\|v\|+2)^{2k_o}A_2}\geq D\frac{\|m\|^{1\slash k_0}+\|v\|^2}{A_1}.
\end{equation*}
Hence, \eqref{vwieksze} follows from Theorem~\ref{PreEst}.

Since $0<\phi_{2k_o}(m)\leq 1$ for all $m,$ we note that
\begin{equation*}
\|m\|\phi_{2k_o}(m)\geq\|m\|^{1\slash k_o}\text{ for }\|m\|\geq 1
\end{equation*}
and
\begin{equation*}
\|m\|^{1\slash k_o}\geq\|m\|\phi_{2k_o}(m)\geq C\|m\|^2\text{ for }\|m\|<1.
\end{equation*}
Since $A_1\leq A_3$ and $A_2\leq A_3,$
Theorem~\ref{ubpsigma} follows from \eqref{vmniejsze} and \eqref{vwieksze}.
\section{Upper estimate for the Poisson kernel}\label{parpkernel}

\subsection{Poisson kernel}\label{PKE}
 Let $\mu_t$ be the semigroup of probability measures on $S=N\rtimes\R^k$ generated by $\mathcal{L}_\alpha.$ It is known \cite{DH,DHZ} that
\begin{equation*}
\lim_{t\to\infty}(\pi(\check\mu_t),f)=(\nu,f),
\end{equation*}
where $\pi$ denotes the projection from $S$ onto $N$ and $(\check\mu,f)=(\mu,\check f),$ $\check f(x)=f(x^{-1}).$
Let $a\in\R^k$
and let $\mu$ be a measure on $N.$ We define
\begin{equation*}
(\mu^a,f)=(\mu,f\circ\Ad(a)).
\end{equation*}
For $a\in\R^k$ we have
\begin{equation}\label{defnua}
\nu^a(x)=\nu(a^{-1}xa)\chi(a)^{-1},\;\;x\in N,
\end{equation}
where $\chi$  is as in \refp{DefOfX}.

It is an easy calculation to check that
\begin{equation}\label{p1}
\lim_{t\to\infty}(\pi(\check\mu_t)^a,f)=(\nu^a,f).
\end{equation}
We need the following
\begin{lemma}\label{p2}
We have
\begin{equation*}
(\pi(\check\mu_t)^a,f)=(\e_a\check P^\sigma_{t,0},f).
\end{equation*}
\end{lemma}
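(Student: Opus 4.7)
The plan is to recognize the left-hand side as $T_tF(e,0)$ for a suitably chosen extension $F\colon S\to\C$ of $f$, and then to compute $T_tF(e,0)$ in two ways: first via the defining relation $T_tF(e,0)=(\mu_t,F)$, and second via the probabilistic formula \eqref{probform} of Theorem~\ref{dmuh}.

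Define $F(n,b):=f(\Ad(a-b)(n^{-1}))$. Unwinding the semidirect-product inversion $(n,b)^{-1}=(\Ad(-b)(n^{-1}),-b)$ shows $F(s)=(f\circ\Ad(a)\circ\pi)(s^{-1})$ for every $s\in S$, so
\[
T_tF(e,0)=(\mu_t,F)=(\check\mu_t,f\circ\Ad(a)\circ\pi)=(\pi(\check\mu_t)^a,f),
\]
which is the left-hand side of the lemma.

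On the other hand, \eqref{probform} at $(x,a)=(e,0)$ gives $T_tF(e,0)=\e_0[(F*_N P^\sigma_{t,0})(e,\sigma(t))]$. Substituting the definition of $F$ and then $z=y^{-1}$ on $N$ (which preserves Haar measure) yields
\[
T_tF(e,0)=\e_0\Bigl[\int_N f\bigl(\Ad(a-\sigma(t))z\bigr)\,P^\sigma_{t,0}(z)\,dz\Bigr].
\]
To match this with the right-hand side $\e_a[\int_N f(x)P^\sigma_{t,0}(x^{-1})\,dx]$, one uses the scaling identity $P^{a+\sigma}_{t,0}(y)=\chi(a)^{-1}P^\sigma_{t,0}(\Ad(a^{-1})y)$, which follows from $\Ad(a)$ conjugating $\mathcal L_N^b$ to $\mathcal L_N^{a+b}$, together with a compensating change of variable $z\mapsto\Ad(a-\sigma(t))z$ on $N$ and a corresponding reindexing of the Brownian path so that its starting point is $a$ rather than $0$.

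The main obstacle is this final matching step: the Jacobian $\chi(a-\sigma(t))$ from the $N$-substitution must combine with the modular factors arising from the scaling identity so as to transform the $\Ad(a-\sigma(t))$ twist inside $f$ into a plain $\Ad(a)$, while simultaneously converting $\e_0[\,\cdot\,]$ into $\e_a[\,\cdot\,]$. Verifying this cancellation carefully --- and in particular identifying the reparametrized driving path with one started at $a$ --- is the crux of the proof.
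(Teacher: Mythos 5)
Your plan — reduce the left-hand side to $T_t F(e,0)$ for a well-chosen $F$ and then invoke~\eqref{probform} — is a good one, and it is close to the natural proof. But your choice of $F$ has the group inversion in the wrong place, and this is exactly what creates the unwanted $\Ad(a-\sigma(t))$-twist that you then cannot cancel. Since $\mathcal L_\alpha$ is \emph{left}-invariant, the semigroup $T_t$ commutes with left translations and is therefore a right-convolution operator, $T_t h = h * \mu_t$; evaluating at the identity gives $T_t h(e,0)=(\check\mu_t,h)$, \emph{not} $(\mu_t,h)$. Consequently the function that reproduces the left-hand side is
\[
F(n,b)=f\bigl(\Ad(a)\,n\bigr)=(f\circ\Ad(a)\circ\pi)(n,b),
\]
with \emph{no} inversion, since then $T_tF(e,0)=(\check\mu_t,F)=(\check\mu_t,f\circ\Ad(a)\circ\pi)=(\pi(\check\mu_t)^a,f)$. (With your $F=(f\circ\Ad(a)\circ\pi)^\vee$ and the correct convention one instead obtains $(\check\mu_t,F)=(\mu_t,f\circ\Ad(a)\circ\pi)=(\pi(\mu_t)^a,f)$, the wrong object.)

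This is not a cosmetic slip: the mismatch you flag at the end as ``the crux'' is a genuine obstruction, not a cancellation waiting to be verified. With your $F$, applying~\eqref{probform} and unwinding the inversion $(y,\sigma(t))^{-1}=(\Ad(-\sigma(t))y^{-1},-\sigma(t))$ produces an $\Ad(a-\sigma(t))$-twist of the argument of $f$, while the right-hand side — after the scaling identity $P^{a+\sigma}_{t,0}(v)=\chi(a)^{-1}P^\sigma_{t,0}(\Ad(-a)v)$ and the shift $\sigma\mapsto a+\sigma$ — carries only the fixed twist $\Ad(a)$ (applied to $w^{-1}$). No change of variables on $N$ and no reindexing of the driving path can make the path-dependent dilation $\Ad(-\sigma(t))$ disappear; one can already see in the abelian case $N=\R$ that the two expressions involve the exponential functionals $e^{-2\lambda\sigma(t)}\int_0^t e^{2\lambda\sigma(u)}\,du$ and $\int_0^t e^{2\lambda\sigma(u)}\,du$, which do not have the same law when the drift $-2\alpha$ is nonzero.

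With the correct $F$ the argument closes in three short steps and every ingredient you listed is used exactly once:
\[
T_tF(e,0)=\e_0\!\int_N f(\Ad(a)y)\,P^\sigma_{t,0}(y^{-1})\,dy
=\e_0\!\int_N f(w)\,P^\sigma_{t,0}\bigl(\Ad(-a)w^{-1}\bigr)\chi(a)^{-1}\,dw,
\]
via $w=\Ad(a)(y^{-1})$ (Jacobian $\chi(a)^{-1}$ on the unimodular $N$); then the scaling identity gives $P^\sigma_{t,0}(\Ad(-a)w^{-1})\chi(a)^{-1}=P^{a+\sigma}_{t,0}(w^{-1})$; and finally replacing $\sigma$ by $a+\sigma$ re-expresses the Wiener expectation started at $0$ as one started at $a$, yielding $\e_a\int_N f(w)\,\check P^\sigma_{t,0}(w)\,dw=(\e_a\check P^\sigma_{t,0},f)$. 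No path-dependent twist ever appears, so the delicate cancellation you were bracing for simply never arises.
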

\begin{proof}
This equality follows from  formula\refp{probform}. See \cite[Lemma~4.1]{PUcm} for the details.
\end{proof}
By \eqref{p1} and Lemma~\ref{p2} it follows that
\begin{equation}\label{r}
(\nu^a,f)=\lim_{t\to\infty}(\pi(\check\mu_t)^a,f)
=\lim_{t\to\infty}(\e_a\check P^\sigma_{t,0},f).
\end{equation}

Let $\nu(x)=\nu(m,v),$ $m\in\R^{m},$ $v\in\R^n,$ be the Poisson kernel on $N$ for the operator $\mathcal L_\alpha$ in \eqref{defofl2}.
Recall that we assume that
\begin{equation*}
\lambda(\alpha)>0\text{ for all }\lambda\in\Lambda.
\end{equation*}
Hence $\alpha$ belongs to the positive Weyl chamber $A^+.$ The operator $\Delta_\alpha$ generates the Brownian motion with drift $-2\alpha,$
\begin{equation*}
\sigma(u)=b(u)-2\alpha u,
\end{equation*}
where $b(u)$ is the $k$-dimensional standard Brownian motion normalized so that $\Var b_u=2u.$

Let $\nu^a$ be as in \eqref{defnua}. We also use the notation introduced in\refp{DefOfgamLam}.
\begin{theorem}\label{thnua}
For all compact subsets $K\not\ni e$ of $N=M\rtimes V,$ all $\wp\in A^+,$ and all $\varepsilon>0$ there exists a constant $C=C(K,\wp,\varepsilon)>0$ such that for all $s<0,$
\begin{multline}\label{estth61}
\nu^{s\wp}(x)\leq Ce^{-\rho_0(s\wp)}e^{(s\slash 2)\g_\Theta(\wp)\o\g_\Theta(\alpha)}
e^{(s\slash 2)\g_\Lam(\wp)\o\g_\Lam(\alpha)}\\
\text{ if }x=(m,v)\in K_1=K\cap\{\|m\|\geq\varepsilon,\;\|v\|\geq\varepsilon\},
\end{multline}
\begin{multline}\label{estth611}
\nu^{s\wp}(x)\leq Ce^{-\rho_0(s\wp)}e^{s\g_\Theta(\wp)\o\g_\Theta(\alpha)}\\
\text{ if }x=(m,v)\in K_2=K\cap\{\|m\|<\varepsilon,\;\|v\|\geq\varepsilon\},
\end{multline}
and
\begin{multline}\label{estth6111}
\nu^{s\wp}(x)\leq Ce^{-\rho_0(s\wp)}e^{s\g_\Lam(\wp)\o\g_\Lam(\alpha)}\\\text{ if }x=(m,v)\in K_3=K\cap\{\|m\|\geq\varepsilon,\;\|v\|<\varepsilon\}.
\end{multline}
\end{theorem}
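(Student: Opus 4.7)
The plan is to combine the weak convergence identity (\ref{r}) with the pointwise bound of Theorem~\ref{ubpsigma} and then evaluate the resulting expectation using Lemma~\ref{lperp}. Since $P^\sigma_{t,0}$ and $\nu$ are both smooth on $N\setminus\{e\}$, the weak convergence in (\ref{r}) can be upgraded to the pointwise identity
\[
\nu^{s\wp}(x)=\lim_{t\to\infty}\e_{s\wp}P^\sigma_{t,0}(x^{-1}),\qquad x\in N\setminus\{e\},
\]
with $\sigma(u)=b(u)-2\alpha u$. Inserting the estimate of Theorem~\ref{ubpsigma} and noting that the clocks $A_{M,i}^\sigma(0,t)$ and $A_{V,j}^\sigma(0,t)$ converge monotonically to the a.s.\ finite perpetual functionals $\int_0^\infty e^{2\xi_i(\sigma(u))}du$ and $\int_0^\infty e^{2\vartheta_j(\sigma(u))}du$ (finite because $\xi_i(\alpha),\vartheta_j(\alpha)>0$), monotone convergence lets the limit pass inside the expectation. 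The task reduces to controlling
\[
J:=\e_{s\wp}\!\left[(A_{N,\Pi}^\sigma)^{-1/2}(\|m\|^{1/(2k_0)}+1+(A_{V,\Sigma}^\sigma)^{1/2})e^{-D\|v\|^2/A_{V,\Sigma}^\sigma-D(\|m\|^{1/k_0}\wedge\|m\|^2)/A_{N,\Sigma}^\sigma}\right]
\]
(with all $A$'s evaluated on $(0,\infty)$) on each of the regions $K_1,K_2,K_3$.

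The crucial mechanism is that by Lemma~\ref{lperp} the integral of $(A_\lambda^\sigma)^{-1/2}$ against its inverse gamma density yields a factor proportional to $e^{-\lambda(s\wp)}$; multiplying over all $\lambda\in\Lambda$ via $(A_{N,\Pi}^\sigma)^{-1/2}=\prod_{\lambda\in\Lambda}(A_\lambda^\sigma)^{-1/2}$ reproduces the prefactor $e^{-\rho_0(s\wp)}$ of (\ref{estth61})--(\ref{estth6111}), since $\rho_0=\sum_{\lambda\in\Lambda}\lambda$. On region $K_2$ the $m$-dependent factors are $O(1)$ and only $e^{-D\|v\|^2/A_{V,\Sigma}^\sigma}$ contributes. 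Using $A_{V,\Sigma}^\sigma\leq n\max_jA_{V,j}^\sigma$ and splitting the maximum, this reduces to a sum of $n$ integrals, each involving only a single $A_{V,j}^\sigma$ in the exponent. Lemma~\ref{lperp} then produces, on top of $e^{-\rho_0(s\wp)}$, an additional decay $e^{s\cdot 2\vartheta_j(\wp)\vartheta_j(\alpha)/\vartheta_j^2}$; the elementary inequality $\min_{\vartheta\in\Theta}\vartheta(\wp)\vartheta(\alpha)/\vartheta^2\geq\gamma_\Theta(\wp)\overline\gamma_\Theta(\alpha)$ yields (\ref{estth611}). Region $K_3$ is analogous, now using $e^{-D(\|m\|^{1/k_0}\wedge\|m\|^2)/A_{N,\Sigma}^\sigma}$; the denominator $A_{N,\Sigma}^\sigma$ involves both $\Xi$ and $\Theta$, so the minimum is taken over the enlarged $\Lambda$, giving (\ref{estth6111}).

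For $K_1$, where both $\|m\|$ and $\|v\|$ are bounded away from zero, neither exponential alone suffices. I apply Cauchy--Schwarz after symmetrically pairing $(A_{N,\Pi}^\sigma)^{-1/4}$ with each half of the exponent:
\[
J\leq C\bigl(\e_{s\wp}[(A_{N,\Pi}^\sigma)^{-1/2}e^{-2D\|v\|^2/A_{V,\Sigma}^\sigma}]\bigr)^{1/2}\bigl(\e_{s\wp}[(A_{N,\Pi}^\sigma)^{-1/2}e^{-2D(\|m\|^{1/k_0}\wedge\|m\|^2)/A_{N,\Sigma}^\sigma}]\bigr)^{1/2}.
\]
Each factor is now exactly of the $K_2$ or $K_3$ form; since the asymptotic $s$-rate from Lemma~\ref{lperp} depends only on $\lambda(\wp),\lambda(\alpha),\lambda^2$ and not on the constant multiplying $\|v\|^2$ or $\|m\|^{1/k_0}$, the doubled coefficient does not affect the rate. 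Taking the square root then halves the exponent in $s$, producing exactly the $(s/2)$ factors in (\ref{estth61}); the two $e^{-\rho_0(s\wp)/2}$ prefactors combine to give the single $e^{-\rho_0(s\wp)}$. The main obstacle throughout is the rigorous treatment of the dependence among the $A_\lambda^\sigma$, which are jointly functionals of one Brownian motion $\sigma$; this is handled by H\"older-type inequalities with the constant losses absorbed into the $\varepsilon$-dependent constant $C$ that the theorem permits.
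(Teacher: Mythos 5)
Your proposal follows essentially the same route as the paper: pass $t\to\infty$ in \eqref{r}, insert the pointwise bound of Theorem~\ref{ubpsigma}, exploit the exact scaling $A_\lambda^\sigma\big|_{\sigma(0)=s\wp}=e^{2\lambda(s\wp)}A_\lambda^\sigma\big|_{\sigma(0)=0}$ to produce $e^{-\rho_0(s\wp)}$, use Cauchy--Schwarz to decouple $(A_{N,\Pi}^\sigma)^{-1/2}$ from the exponential factors, and use Lemma~\ref{lperp} to control the resulting perpetual-functional expectations. The paper's organization differs slightly: it does a three-way Cauchy--Schwarz on $K_1$ (pulling out $(\e(A_0^{-1})^2)^{1/2}$ first, then splitting the two exponentials into $1/4$-th powers), whereas you pair $(A_{N,\Pi}^\sigma)^{-1/4}$ with each half of the exponent; both yield the same $(s/2)$ rates. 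The one genuinely self-contained addition in your argument is the bound $A_{V,\Sigma}^\sigma\leq n\max_j A_{V,j}^\sigma$ followed by splitting the maximum into a sum of single-coordinate integrals, each handled directly by the inverse-gamma density of Lemma~\ref{lperp} and the elementary inequality $\min_\vartheta\vartheta(\wp)\vartheta(\alpha)/\vartheta^2\geq\gamma_\Theta(\wp)\o\gamma_\Theta(\alpha)$ — the paper instead cites Lemma~6.2 of \cite{PUcm} for the estimate $\e_0\exp(-c/A_{V,\Sigma}^\sigma)\leq Cc^{-\o\gamma_\Theta(\alpha)}$, so your version avoids the external reference. Two points deserve more care. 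First, the Cauchy--Schwarz step is not a ``constant loss'': it halves the exponent in $s$, which is precisely why the $K_1$ rate carries $(s/2)$ while $K_2$, $K_3$ carry $s$; your phrasing suggests only multiplicative constants are lost. Second, your claimed per-coordinate decay $e^{s\cdot 2\vartheta_j(\wp)\vartheta_j(\alpha)/\vartheta_j^2}$ is the rate for $\e e^{-c/A_{V,j}^\sigma}$ \emph{before} the Cauchy--Schwarz that decouples it from $(A_{N,\Pi}^\sigma)^{-1/2}$; after that step the effective rate is halved, and only then does the minimum-over-$\Theta$ inequality give exactly \eqref{estth611}. These are issues of presentation, not of substance, and the overall argument is sound and equivalent to the paper's.
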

\begin{proof}
First we consider elements $x=(m,v)$ from the set $K_1.$
Let $A_j$ be defined as in \eqref{Ajsigma}  but with $t=\infty.$ By Theorem~\ref{ubpsigma}, we have
\begin{equation}\label{nu*0}
\nu^{s\wp}\leq C\e_{s\wp}A_0^{-1}\exp\left(-\frac{D}{A_1}-\frac{D}{A_3}\right)+C\e_{s\wp} A_0^{-1}A_1^{1\slash 2}\exp\left(-\frac{D}{ A_1}-\frac{D}{ A_3}\right).
\end{equation}
We estimate the first expectation on the right.
\begin{equation}\label{*1}
\begin{split}
\e_{s\wp}&A_0^{-1}\exp\left(-\frac{D}{ A_1}-\frac{D}{ A_3}\right)\\
\leq&\left(\e_{s\wp}( A_0^{-1})^2\right)^{1\slash 2}\left(\e_{s\wp}\exp\left(-\frac{2D}{ A_1}-\frac{2D}{ A_3}\right)\right)^{1\slash 2}\\
\leq&\left(\e_{s\wp}( A_0^{-1})^2\right)^{1\slash 2}\left(\e_{s\wp}\exp\left(-\frac{4D}{ A_1}\right)\right)^{1\slash 4}\left(\e_{s\wp}\exp\left(-\frac{4D}{A_3}\right)\right)^{1\slash 4}.
\end{split}
\end{equation}
By the Cauchy-Schwarz inequality we get,
\begin{equation}\label{eqperp1}
\begin{split}
\e_{s\wp}( A_0^{-1})^2=&\e_{s\wp}(A_{M,\Pi}^\sigma)^{-1}(A_{V,\Pi}^{\sigma})^{-1}\\
=&e^{-2\rho_0(s\wp)}\e_{0}(A_{M,\Pi}^\sigma)^{-1}(A_{V,\Pi}^{\sigma})^{-1}\\
\leq&e^{-2\rho_0(s\wp)}(\e_{0}(A_{M,\Pi}^\sigma)^{-2})^{1\slash 2}(\e_{0}(A_{V,\Pi}^{\sigma})^{-2})^{1\slash 2}.
\end{split}
\end{equation}
Since, by Lemma~\ref{lperp}, the expected values $\e_0(A_{M,j}^\sigma)^{-d},$ $j=1,\ldots,m,$ and $\e_0(A_{V,i}^\sigma)^{-d},$ $i=1,\ldots, n,$ are finite
for all $d>0,$ we can apply the Cauchy-Schwarz inequality successively to each of the remaining expectation in \eqref{eqperp1} and conclude their finiteness.

Now we consider $\e_{s\wp}\exp(-4D_1\slash A_1)$ and $\e_{s\wp}\exp(-4D_2\slash A_3)$ from \eqref{*1}.
Clearly,
\begin{equation}\label{zak1}
\begin{split}
\e_{s\wp}\exp(-4D_1\slash A_1)
\leq&\e_0\exp(-4D_1\slash(M(s\wp) A_{1})),
\end{split}
\end{equation}
where
$$
M(s\wp)=\max_{\vartheta\in\Theta}e^{2\vartheta(s\wp)}=e^{2s\min_{\vartheta\in\Theta}\vartheta(\wp)}=e^{2s\gamma_\Theta(\wp)}.
$$
Proceeding exactly in the same way as in the proof of \cite[Lemma~6.2]{PUcm} we show that \eqref{zak1} is bounded by
\begin{equation}\label{zak11}
CM(s\wp)^{\o\gamma_\Theta(\alpha)}=Ce^{2s\gamma_\Theta(\wp)
\o\gamma_\Theta(\alpha)}.
\end{equation}

The expectation $\e_{s\wp}\exp(-4D_2\slash A_3)$ is similar.
Again, in the same way as in the proof of \cite[Lemma~6.2]{PUcm} we show that $\e_{s\wp}\exp(-4D_2\slash A_3)$ is bounded by
\begin{equation*}
CM_1(s\wp)^{\o\gamma_\Lambda(\alpha)},
\end{equation*}
where
\begin{equation*}
M_1(s\wp)=\max_{\lambda\in\Lambda}e^{2\lambda(s\wp)}=e^{2s\gamma_\Lambda(\wp)}.
\end{equation*}
Hence,
\begin{equation*}\label{a3zfalka}
\e_{s\wp}\exp(-4D_2\slash A_3)\leq Ce^{2s\gamma_\Lambda(\wp)\o\gamma_\Lambda(\alpha)}.
\end{equation*}
Now we estimate the second expectation on the right in \eqref{nu*0}.
\begin{equation*}
\begin{split}
\e_{s\wp}& A_0^{-1}A_1^{1\slash 2}\exp\left(-\frac{D}{ A_1}-\frac{D}{ A_3}\right)\\
\leq&\sum_{j=1}^n\e_{s\wp} A_0^{-1}A_{V,j}^{1\slash 2}\exp\left(-\frac{D}{ A_1}-\frac{D}{ A_3}\right)\\
=&\sum_{j=1}^n\e_{s\wp} A_{M,\Pi}^{-1\slash 2}\prod_{k\ne j}A_{V,k}^{-1\slash 2}\exp\left(-\frac{D}{ A_1}-\frac{D}{ A_3}\right)\\
=&\sum_{j=1}^ne^{-\sum_{\xi\in\Xi}\xi(s\wp)}e^{-\sum_{\vartheta\ne\vartheta_j}\vartheta(s\wp)}\e_{0} A_{M,\Pi}^{-1\slash 2}\prod_{k\ne j}A_{V,k}^{-1\slash 2}\exp\left(-\frac{D}{ A_1}-\frac{D}{ A_3}\right).
\end{split}
\end{equation*}
Since $s<0,$
\begin{equation*}
e^{-\sum_{\xi\in\Xi}\xi(s\wp)}e^{-\sum_{\vartheta\ne\vartheta_j}\vartheta(s\wp)}\leq e^{-\rho_0(s\wp)}.
\end{equation*}
To estimate
\begin{equation*}
\e_{0} A_{M,\Pi}^{-1\slash 2}\prod_{k\ne j}A_{V,k}^{-1\slash 2}\exp\left(-\frac{D}{ A_1}-\frac{D}{ A_3}\right)
\end{equation*}
we proceed as in \eqref{*1} and \eqref{eqperp1} and get the same estimate.
Hence, the estimate \eqref{estth61} holds on $K_1.$

Now we have to consider the set
\begin{equation*}
K_2=K\cap\{(m,v):\|m\|<\varepsilon,\;\|v\|\geq\varepsilon\}.
\end{equation*}
On this set the estimate from Theorem~\ref{ubpsigma} simplifies and we get that
\begin{equation}\label{poison2}
\nu^{s\wp}(x)\leq C\e_{s\wp} A_0^{-1}\exp\left(-\frac{D_1}{ A_1}\right)+C\e_{s\wp} A_0^{-1}A_1^{1\slash 2}\exp\left(-\frac{D}{ A_1}\right).
\end{equation}
Using Lemma~\ref{lperp}, \eqref{eqperp1}, \eqref{zak1} and \eqref{zak11} as above, we get the estimate
\begin{equation*}
\begin{split}
\e_{s\wp} A_0^{-1}\exp\left(-\frac{D_1}{ A_1}\right)\leq&\left(\e_{s\wp}( A_0^{-1})^2\right)^{1\slash 2}\left(\e_{s\wp}\exp\left(-\frac{2D_1}{ A_1}\right)\right)^{1\slash 2}\\
\leq&e^{-\rho_0(s\wp)}e^{s\gamma_\Theta(\wp)\o\gamma_\Theta(\alpha)}.
\end{split}
\end{equation*}
As in the previous case the second expectation in \eqref{poison2} has the same estimate as the first one.
Hence, the estimate \eqref{estth611} holds on $K_2.$ Finally, we consider the set
\begin{equation*}
K_3=K\cap\{(m,v):\|m\|\geq\varepsilon,\;\|v\|<\varepsilon\}.
\end{equation*}
On $K_3,$
\begin{equation}\label{poison3}
\nu^{s\wp}(x)\leq C\e_{s\wp} A_0^{-1}\exp\left(-\frac{D_2}{ A_3}\right)+C\e_{s\wp} A_0^{-1}A_1^{1\slash 2}\exp\left(-\frac{D_2}{ A_3}\right).
\end{equation}
We have
\begin{equation*}
\begin{split}
\e_{s\wp} A_0^{-1}\exp\left(-\frac{D_2}{ A_3}\right)
\leq&\left(\e_{s\wp}( A_0^{-1})^2\right)^{1\slash 2}\left(\e_{s\wp}\exp\left(-\frac{2D_2}{ A_3}\right)\right)^{1\slash 2}\\
\leq&e^{-\rho_0(s\wp)}e^{s\gamma_\Lambda(\wp)\o\gamma_\Lambda(\alpha)}.
\end{split}
\end{equation*}
Again, the second expectation in \eqref{poison3} has the same estimate as the first one. Thus \eqref{estth6111} is proved.
\end{proof}
\subsection{Proof of Theorem~\ref{newupper}}
Having Theorem~\ref{thnua}, we use the standard homogeneity argument as follows.
\begin{proof}[Proof of Theorem~\ref{newupper}]
It is clear that for $x\in N$ with the norm $|x|_\wp\leq 1$ we have $\nu(x)\leq C_{\wp}.$

Let $\delta_t^\rho=\Ad((\log t)\wp).$ Then $|\delta_t^\wp x|_\wp=t|x|_\wp.$ Let $x=\delta^\wp_{\exp(-s)}x_o$ with $|x_o|_\wp=1$ and $s<0.$ Then $|x|_\wp=e^{-s}>1.$ Let $K=\{x_o:\;|x_o|_\wp=1\}.$ By definition \eqref{defnua} of $\nu^{s\wp},$ we get
\begin{equation*}
\nu(x)=\nu(\delta^\rho_{\exp(-s)}x_o)=\nu((s\wp)^{-1}x_o(s\wp))=e^{\rho_0(s\wp)}\nu^{s\wp}(x_o),
\end{equation*}
where $\rho_0=\sum_j\vartheta_{j}+\sum_i\xi_{i},$ and the result follows from Theorem~\ref{thnua}.
\end{proof}
\section{Example}\label{example}
Consider $N=\mathcal H_n,$ the $2n+1$-dimensional Heisenberg group, which we realize as $\R^n\times\R^n\times\R$ with the Lie group multiplication given by
$$
(x_1,y_1,z_1)(x_2,y_2,z_2)=(x_1+x_2,y_1+y_2,z_1+z_2+x_1\cdot y_2).
$$

The corresponding Lie algebra $\mathfrak h_n$ is then spanned by the left invariant vector fields
\begin{equation*}
X_j=\partial_{x_j},\quad Y_j=\partial_{y_j}+x_j\partial_z,\quad  Z=\partial_z
\end{equation*}
where $1\le j\le n$.  Let $A=\R^k$ and let $\xi_{1,j}$, $\xi_{2,j}$, $\xi_3\in \lp\R^k\rp^*$, $1\le j\le n$, be such that
$$
\xi_{1,j}+ \xi_{2,j}=\xi_3
$$
independently of $j$.  For $x\in\R^n$, $a\in \R^k$, and $i=1,2$, we set
$$
e^{\xi_i(a)}x=(e^{\xi_{i,1}(a)}x_1,e^{\xi_{i,2}(a)}x_2,\dots,e^{\xi_{i,n}(a)}x_n).
$$
We then define an $A$ action on $\mathcal H_n$ by automorphisms of  $\mathcal H_n$ by
\begin{equation*}
a(x,y,z)a^{-1}=(e^{\xi_1(a)}x,e^{\xi_2(a)}y,e^{\xi_3(a)}z),
\end{equation*}
We then let $S= \mathcal H_n\rtimes A$.

Let $\o X_j$, $\o Y_j$, and $\o Z$ be, respectively, $X_j$, $Y_j$, and $Z$ considered as left invariant vector fields on $S$.  Then
$$
\o X_j=e^{\xi_{1,j}(a)}X_j,\quad \o Y_j=e^{\xi_{2,j}(a)}Y_j,\quad \o Z=e^{\xi_3(a)}Z.
$$

 We set
\begin{equation}\label{HeisLa}
\ali
\mathcal L_\alpha&=\sum_{j=1}^n\left(\o X_j^2+\o Y_j^2\right)
+\o Z^2+\Delta_\alpha\\
&=\sum_{j=1}^n\left(e^{2\xi_{1,j}(a)}X_j^2+e^{2\xi_{2,j}(a)}Y_j^2\right)
+e^{2\xi_3(a)}Z^2+\Delta_\alpha,\\
\eal
\end{equation}
where $\Delta_\alpha$ is defined in \eqref{defofdelta}.
\begin{example}\label{example1}
Consider the operator $\mathcal L_\alpha$ defined in \eqref{HeisLa} on $\mathcal H_n\rtimes A$ with $A=\R^2$ and $\xi_{1,j}=(1,0),$ $\xi_{2,j}=(0,1).$ Theorem 1.2 of~\cite{PUcm} gives
\begin{equation*}
\nu(x,y,z)\leq C(1+|(x,y,z)|_\wp)^{-\frac{C_1\rho_0(\wp)\gamma(\alpha)}{4}},
\end{equation*}
where
\begin{equation*}
\gamma(\alpha)=2\min(\alpha_1,\alpha_2)
\end{equation*}
for some constant $C_1$ which depends on $\wp$ and can be computed. Take $\wp=(1,2).$ We have $\rho_0=\sum_j\xi_{1,j}+\sum_j\xi_{2,j}+\xi_3,$ where $\xi_{i,j}(a)=a_i,$ $i=1,2,$ $j=1,\ldots,n.$ To compute $C_1$ we proceed similarly as in \cite[Example~1]{PUcm} and get
\begin{equation*}
\nu(x,y,z)\leq C(1+|(x,y,z)|_{(1,2)})^{-\frac{\min(\alpha_1,\alpha_2)}{2}}.
\end{equation*}
whereas Theorem~\ref{newupper} gives, for $\|(x,z)\|>1$ and $\|y\|>1,$
\begin{equation*}
\nu(x,y,z)\leq C(1+|(x,y,z)|_{(1,2)})^{-\frac{\alpha_1}{2}-\frac{\min(\alpha_1,\alpha_2)}{2}}.
\end{equation*}
Similarly, Theorem 1.1 of~\cite{PUpota} gives, for every $q>1,$
\begin{equation*}
\nu(x,y,z)\leq C_q(1+|(x,y,z)|_{\alpha})^{-\frac{2}{q}(\min(\alpha_1,\alpha_2))^2},
\end{equation*}
whereas Theorem~\ref{newupper} gives, for $\|(x,z)\|>1$ and $\|y\|>1,$
\begin{equation*}
\nu(x,y,z)\leq C(1+|(x,y,z)|_{\alpha})^{-\frac{\alpha_1^2}{2}-\frac{\alpha_2^2}{2}}
\end{equation*}
which is again a better estimate if we take for example an operator with $\alpha_1\sqrt{\frac{4}{q}-1}<\alpha_2.$
\end{example}


\begin{thebibliography}{111}

\bibitem{Borodin}
A.N. Borodin and P.~Salminen.
\newblock {\em Handbook of {B}rownian motion--facts and formulae}.
\newblock Second edition. Probability and its Applications. Birkh{\"{a}}user Verlag, Basel, 2002.

\bibitem{Br}
L.~Breiman.
\newblock {\em Probability.} Corrected reprint of the 1968 original. Classics in Applied Mathematics, 7. Society for Industrial and Applied Mathematics (SIAM), Philadelphia, PA, 1992.

\bibitem{BDH}
D.~Buraczewski, E.~Damek and A.~Hulanicki.
\newblock Asymptotic behavior of Poisson kernels on $NA$ groups.
\newblock {\em Comm. Partial Differential Equations} 31:1547--1589, 2006.

\bibitem{D}
E.~Damek.
\newblock Left-invariant degenerate elliptic operators on semidirect extensions of homogeneous groups.
\newblock {\em Studia Math.} 89:169--196, 1988.

\bibitem{DH}
E.~Damek and A.~Hulanicki.
\newblock Boundaries for left-invariant subelliptic operators on semidirect products of nilpotent and abelian groups.
\newblock {\em J. Reine Angew. Math.} 411:1--38, 1990.

\bibitem{DHstudia}
E.~Damek and A.~Hulanicki.
\newblock Maximal functions related to subelliptic operators invariant under an action of a nilpotent group.
\newblock {\em Studia Math.} 101(1): 33--68, 1992.

\bibitem{DHcm}
E.~Damek and A.~Hulanicki.
\newblock Asymptotic behavior of the invariant measure for a diffusion related to an $NA$ group.
\newblock{\em Colloq. Math.} 104(2):285--309, 2006.

\bibitem{DHU}
E.~Damek, A.~Hulanicki and R.~Urban.
\newblock Martin boundary for homogeneous riemannian manifolds of negative curature at the bottom of the spectrum.
\newblock {\em Rev. Mat. Iberoamericana} 17(2):257--293, 2001.

\bibitem{DHZ}
E.~Damek, A.~Hulanicki and J.~Zienkiewicz.
\newblock Estimates for the Poisson kernels and their derivatives on rank one $NA$ groups.
\newblock {\em Studia Math.} 126(2):115--148, 1997.

\bibitem{D-SGD}
A.~De Schepper, M.~Goovaerts and F.~Delbaen.
\newblock The Laplace transform of annuities certain with exponential time distribution.
\newblock {\em Insurance Math. Econom.} 11(4):291--294, 1992.

\bibitem{Du}
D.~Dufresne.
\newblock The distribution of a perpetuity, with application to
risk theory and pension funding.
\newblock {\em Scand. Actuarial J.} 9:39--79, 1990.

\bibitem{FS}
G.B. Folland and E.~Stein.
\newblock {\em Hardy spaces on homogeneous groups}.
\newblock Princeton Univ. Press, 1982.

\bibitem{KS}
I.~Karatzas and S.E.~Shreve.
\newblock {\em Brownian motion and stochastic calculus.} Second edition. Graduate Texts in Mathematics, 113. Springer-Verlag, New York, 1991.

\bibitem{Yor}
H.~Matsumoto and M.~Yor.
\newblock Exponential functionals of Brownian motion, I:
Probability laws at fixed time.
\newblock {\em Probab. Surv.} 2:312--347, 2005 (electronic).

\bibitem{M}
P.~Malliavin.
\newblock G\'eom\'etrie Diff\'erentiale
Stochastique, Seminaire de Math\'ematique Sup\'erieur \'Et\'e 1997
No 64, Les Press de l'Universit\'e de Montr\'eal, 1978.

\bibitem{MM}
M.P.~Malliavin and P.~Malliavin.
\newblock Factorisation
et lois limites de la diffusion horizontale au-dessus d'un espace
riemannien sym\'etrique
\newblock Lecture Notes in Math. 404,
164--217, Springer-Verlag, 1974.

\bibitem{PUcm}
R.~Penney and R.~Urban.
\newblock Estimates for the Poisson kernel on higher rank $NA$ groups.
\newblock {\em Colloq. Math.} 118(1):259--281, 2010.

\bibitem{PUpota}
R. Penney and R. Urban.
\newblock An upper bound for the Poisson kernel on higher rank $NA$ groups.
\newblock {\em Potential Analysis} 35(4):373--386, 2011, .

\bibitem{JEE}
R. Penney and R. Urban.
\newblock Estimates for the Poisson kernel and the evolution kernel on the Heisenberg group.
\newblock{\em J. Evol. Equ.} 12(2):327--351, 2012.

\bibitem{Str}
D.W.~Stroock.
\newblock {\em Probability theory, an analytic view.}
\newblock Cambridge University Press, Cambridge, 1993.

\bibitem{Tanabe}
H.~Tanabe.
\newblock {\em Equations of evolutions}. Translated from the Japanese by N.~Mugibayashi and H.~Haneda. Monographs and Studies in Mathematics, 6.
\newblock Pitman, London--San Francisco--Melbourne, 1979.

\bibitem{Taylor}
J.C.~Taylor
\newblock {\em Skew products, regular conditional
probabilities and stochastic differential equations: a technical
remark} in S\'eminaire de Probabilit\'e XXVI, Lecture Notes in
Math. 1526, 299--314, Springer, 1992.

\bibitem{U}
R.~Urban.
\newblock Estimates for the derivatives of the Poisson kernels on homogeneous manifolds of negative curvature.
\newblock {\em Math. Z.} 240(4):745--766.

\bibitem{Yor-skladanka}
M.~Yor.
\newblock {\em Exponential functionals of Brownian motion and related processes.} With an introductory chapter by H\'{e}lyette Geman. Chapters 1, 3, 4, 8 translated from the French by Stephen S. Wilson. Springer Finance. Springer-Verlag, Berlin, 2001.

\bibitem{matrix}
F.~Zhang.
\newblock {\em Matrix theory. Basic results and techniques.} Universitext. Springer-Verlag, New York, 1999.

\end{thebibliography}
\end{document}